\newtheorem{theorem}{Theorem}[section]
\newtheorem{lemma}[theorem]{Lemma}
\newtheorem{corollary}[theorem]{Corollary}
\newtheorem{proposition}[theorem]{Proposition}
\theoremstyle{definition}
\newtheorem{definition}[theorem]{Definition}
\newtheorem{example}[theorem]{Example}
\newtheorem{question}[theorem]{Question}
\theoremstyle{remark}
\newtheorem{remark}[theorem]{Remark}
\DeclareMathOperator{\Img}{Im} \DeclareMathOperator{\Hom}{Hom}
\DeclareMathOperator{\Ext}{Ext}
\newcommand{\Dcal}{\ensuremath{\mathcal{D}}}
\newcommand{\Xcal}{\ensuremath{\mathcal{X}}}
\newcommand{\Ycal}{\ensuremath{\mathcal{Y}}}
\newcommand{\Tcal}{\ensuremath{\mathcal{T}}}
\newcommand{\Fcal}{\ensuremath{\mathcal{F}}}
\newcommand{\Ccal}{\ensuremath{\mathcal{C}}}
\newcommand{\Acal}{\ensuremath{\mathcal{A}}}
\newcommand{\Bcal}{\ensuremath{\mathcal{B}}}
\newcommand{\Rcal}{\ensuremath{\mathcal{R}}}
\newcommand{\Hcal}{\ensuremath{\mathcal{H}}}
\newcommand{\Scal}{\ensuremath{\mathcal{S}}}
\newcommand{\Ucal}{\ensuremath{\mathcal{U}}}
\newcommand{\Vcal}{\ensuremath{\mathcal{V}}}
\newcommand{\Wcal}{\ensuremath{\mathcal{W}}}
\DeclareMathOperator{\Ker}{Ker} 
\DeclareMathOperator{\Coker}{Coker}
\numberwithin{equation}{section}
\begin{document}
\title{Properties of abelian categories via recollements}
\author{Carlos E. Parra, Jorge Vit\'oria}
\address{Carlos E. Parra, Instituto de Ciencias Fisicas y Matem\'aticas.
Edificio Emilio Pugin, cuarto piso, Campus Isla Teja, Universidad Austral de Chile, Valdivia, Chile.}
\email{carlos.parra@uach.cl}
\address{Jorge Vit\'oria, Department of Mathematics, City, University of London, Northampton Square, London EC1V 0HB, United Kingdom}
\email{jorge.vitoria@city.ac.uk}
\keywords{Recollement, Grothendieck category, t-structure}
\subjclass[2010]{18A30, 18E15, 18E30, 18E35, 18E40}
\thanks{The first named author was partially supported by CONICYT/FONDECYT/Iniciaci\'on/11160078. The second named author was initially supported by a Marie Curie Intra-European Fellowship within the 7th European Community Framework Programme (PIEF-GA-2012-327376) at the University of Verona and, in the later stages of this project, by the Engineering and Physical Sciences Research Council of the United Kingdom, grant number EP/N016505/1 at City, University of London.}
\maketitle
\begin{abstract}
A recollement is a decomposition of a given category (abelian or triangulated) into two subcategories with functorial data that enables the glueing of structural information. This paper is dedicated to investigating the behaviour under glueing of some basic properties of abelian categories (well-poweredness, Grothendieck's axioms AB3, AB4 and AB5, existence of a generator) in the presence of a recollement. In particular, we observe that in a recollement of a Grothendieck abelian category the other two categories involved are also Grothendieck abelian and, more significantly, we provide an example where the converse does not hold and explore multiple sufficient conditions for it to hold.
\end{abstract}
\section{Introduction}
A common strategy used to study a given category is to decompose it into smaller, better understood subcategories, and to use functorial data on this decomposition to glue this understanding back to the given category. Recollements are particularly useful decompositions of abelian or triangulated categories, as they are associated with certain torsion pairs and categorical localisations. The functors in a recollement allow a transfer of properties from the given category to the subcategories involved and, crucially, vice-versa. In this paper we test this premise in the context of recollements of abelian categories, investigating elementary properties such as the exactness of certain limits and colimits or the existence of generators.

Informally, a recollement is a short exact sequence of abelian or triangulated categories such that the functors involved in this sequence admit both left and right adjoints. Recollements first appeared in the setting of triangulated categories, and they were used in the construction of the category of perverse sheaves on a singular space (\cite{BBD}). This was done by glueing t-structures, which are special kinds of torsion pairs in triangulated categories that produce homologically well-behaved abelian subcategories called hearts. It was indeed observed in \cite{BBD} that given a recollement of triangulated categories and t-structures related by glueing, the corresponding hearts relate to each other in a similar way to the underlying triangulated categories. Indeed, the hearts are related by a recollement of abelian categories. Recollements of abelian categories were later studied in detail in \cite{FP}. 

These tools are often used in representation theory to relate module categories of finite dimensional algebras (\cite{PS}), or to study polynomial functors (\cite{Kuhn}) (which arise also in algebraic topology and algebraic K-theory).  In \cite{PV}, recollements of categories of modules (i.e. where the three categories involved are module categories over rings) were essentially classified: up to Morita equivalence, they are induced by idempotent elements in a ring. Following Grothendieck's hierarchy of abelian categories, it is only natural to study recollements for the more general class of abelian categories that have exact direct limits and a generator, so called Grothendieck (abelian) categories. Such abelian categories appear naturally in algebra and algebraic geometry as categories of modules over a ring or quasi-coherent sheaves over a scheme. Our paper is a step towards understanding recollements of Grothendieck categories. Previous work in a similar direction was developed in \cite{JNN}, where the authors consider, for example, how local finiteness of Grothendieck categories (\cite[Lemma 3.2]{JNN}) or finite dimensionality of Hom-sets between objects of finite length (\cite[Lemma 4.3]{JNN}) glue along short exact sequences of abelian categories.  We summarise our main results as follows. 
\medskip

\noindent\textbf{Theorem.} {\it Let $\Acal$, $\Xcal$ and $\Ycal$ be abelian categories and consider a recollement of the form
\begin{equation}\nonumber\Rcal\colon\ \ \ \ \   \xymatrix@C=0.5cm{
\Ycal \ar[rrr]^{i_*} &&& \Acal \ar[rrr]^{j^*}  \ar @/_1.5pc/[lll]_{i^*}  \ar
 @/^1.5pc/[lll]_{i^!} &&& \Xcal
\ar @/_1.5pc/[lll]_{j_!} \ar
 @/^1.5pc/[lll]_{j_*}
 } 
 \end{equation}
If $\Acal$ admits (set-indexed) coproducts, then the following statements hold.
\begin{enumerate}
\item {\normalfont (Proposition \ref{well-p Prop})} $\Acal$ is well-powered if and only if $\Xcal$ and $\Ycal$ are well-powered;
\item {\normalfont (Theorem \ref{AB5})} $\Acal$ is AB5 if and only if $\Xcal$ and $\Ycal$ are AB5 and the recollement $\Rcal$ is directed;
\item Assume that the recollement $\Rcal$ satisfies one of the following assumptions:
\begin{enumerate}
\item {\normalfont (Corollary \ref{corollary exact Grothendieck})} $i^!$ commutes with direct limits;
\item {\normalfont (Proposition \ref{generator i! exact})} $\Acal$ admits (set-indexed) products, $i^!$ is exact and $\Rcal$ is a directed recollement;
\item {\normalfont (Proposition \ref{generator i*exact})} $i^*$ is exact and there is a generator $G_\Xcal$ of $\Xcal$ such that $\textsf{Im}(j_*)\subseteq \textsf{Gen}(j_*G_\Xcal)$;
\item {\normalfont (Corollary \ref{rec Grothendieck hearts})} $\Rcal$ is a recollement of hearts induced from glueing t-structures in a recollement of well generated triangulated categories and $\Rcal$ is a directed recollement.
\item {\normalfont (Theorem \ref{thm comp gen})} $\Rcal$ is a recollement of hearts, induced from glueing nondegenerate t-structures which fit into cosuspended TTF triples, in a recollement of compactly generated triangulated categories which restricts to a recollement between the subcategories of compact objects.
\end{enumerate}
Then $\Acal$ is Grothendieck if and only if $\Xcal$ and $\Ycal$ are Grothendieck.
\end{enumerate}}
\medskip

\noindent \textbf{Structure of the paper.} We begin with a preliminary section in which we fix notation and recall relevant definitions and results. This includes  a review of recollements, torsion pairs in abelian and triangulated categories and their glueing along recollements. In Section 3, we investigate the glueing of the property of being well-powered and of the property of having exact (co)limits (which includes Grothendieck's axioms AB4 and AB5, and their duals AB4* and AB5*). In Section 4, we look at the existence of a generator and how this property glues along a recollement. In Section 5, we focus on recollements of hearts (in well generated triangulated categories) where the problems faced in Section 4 are somehow mitigated. This section includes an example of a recollement of an abelian category $\Acal$ which is not Grothendieck by two subcategories that are module categories. We finish section 5 with a particular emphasis on recollements of hearts in compactly generated triangulated categories.

\medskip

\noindent\textbf{Acknowledgements.}
This work was partly developed in two research visits: a first one of the first named author to the University of Verona in 2014, and a second one of the second named author to the Austral University of Chile in 2017. The authors would like to thank the Department of Computer Science at the University of Verona and the Mathematics and Physics Institute at the Austral University of Chile for their hospitality and support. Moreover, both authors would like to particularly thank Lidia Angeleri H\"ugel, Francesca Mantese and Manuel Saor\'in for discussions and support in this project.

\section{Preliminaries}
In this section we recall some definitions and statements, and we fix some notation that we will use throughout the paper. We shall assume throughout that all abelian categories considered satisfy the property for any pair of objects $X$ and $Y$ the Yoneda extension group $\Ext^1(X,Y)$ is a set. Note that abelian categories arising as hearts of t-structures naturally satisfy our running assumption concerning Yoneda extensions (see \cite[Remarque 3.1.17(ii)]{BBD}).

\subsection{Limits and Colimits}
Recall that a category $\Lambda$ is \textbf{small} when the isomorphism classes of its objects form a set. If $\Ccal$ is an arbitrary category, then a functor $\Lambda \longrightarrow \Ccal$ is called a \textbf{$\Lambda$-diagram} in $\Ccal$, or simply a diagram in $\Ccal$ when $\Lambda$ is understood. The category of $\Lambda$-diagrams in $\Ccal$ will be denoted by $[\Lambda,\Ccal]$. 
We denote a $\Lambda$-diagram $X$ by $(X_\lambda)_{\lambda\in \Lambda}$, where $X_\lambda:=X(\lambda)$ for each object $\lambda$ in $\Lambda$, whenever the images by $X$ of the morphisms in $\Lambda$ are understood.

If each $\Lambda$-diagram has a colimit, we say that $\Ccal$ \textbf{admits $\Lambda$-colimits}. In this case, the functor $\textsf{colim}_\Lambda: [\Lambda,\Ccal] \longrightarrow \Ccal$ associating each $\Lambda$-diagram to its colimit is the left adjoint to the constant diagram functor $\kappa: \Ccal \longrightarrow [\Lambda,\Ccal]$.  In the case that $\Ccal$ is an abelian category, since the functor $\textsf{colim}_\Lambda$ is a left adjoint, it is right exact. We say that an abelian category $\Ccal$ has \textbf{exact $\Lambda$-colimits} if it admits $\Lambda$-colimits and the $\Lambda$-colimit functor is exact. Dually, we may also consider categories that have $\Lambda$-limits, and consider the functor $\textsf{lim}_\Lambda: [\Lambda,\Ccal] \longrightarrow \Ccal$ with dual properties.

We will be particularly interested in (co)limits when $\Lambda$ is a directed set (i.e., a preordered set $(\Lambda,\leq)$ such that every finite subset has an upper bound), viewed as a small category on which there is a unique morphisms $\lambda \longrightarrow \mu$ exactly when $\lambda\leq \mu$. Such category is said to be \textbf{directed}. The corresponding colimit functor is the \textbf{$\Lambda$-direct limit functor}  $\varinjlim_\Lambda: [\Lambda,\Ccal] \longrightarrow \Ccal$. In fact, the $\Lambda$-diagrams on $\Ccal$ are usually called \textbf{$\Lambda$-directed systems}. Dually, one has \textbf{$\Lambda$-inverse systems} and the \textbf{$\Lambda$-inverse limit functor} $\varprojlim_\Lambda: [\Lambda,\Ccal] \longrightarrow \Ccal$, whenever $\Lambda$ is a \textbf{codirected} category (i.e. whenever $\Lambda^{op}$ is directed).

An example of direct limit is the \textbf{$\Lambda$-coproduct} functor $\coprod_\Lambda: [\Lambda,\Ccal] \longrightarrow \Ccal$, when $\Lambda$ is a \textbf{discrete} category (i.e. a small category where the identities are the only morphisms). Dually, the \textbf{$\Lambda$-product} functor is also regarded as a $\Lambda$-limit when $\Lambda$ is discrete.

\subsection{Properties of abelian categories}
Abelian categories can behave quite badly. However, most of the abelian categories appearing in algebra or geometry satisfy some important properties that we recall now. Most of these properties go back to the work of Grothendieck in \cite{G}. A standard textbook reference for them is \cite[Chapters IV and V]{St}. If $\Acal$ is an abelian category, we say that $\Acal$ is
\begin{itemize}
\item \textbf{well-powered} if for each object of $\Acal$ there is only a set of subobjects (see Definition \ref{well-p});
\item \textbf{AB3} (respectively, \textbf{AB3*}) if all set-indexed coproducts (respectively, products) exist in $\Acal$;
\item \textbf{AB4} (respectively, \textbf{AB4*}) if it is AB3 (respectively, AB3*) and set-indexed coproducts (respectively, products) are exact;
\item \textbf{AB5} (respectively, \textbf{AB5*}) if it is AB3 (respectively, AB3*) and direct (respectively, inverse) limits over any directed (respectively, codirected) category are exact;
\item \textbf{Grothendieck} if it is AB5 and it has a \textbf{generator}, i.e. an object $G$ in $\Acal$ such that, for any morphism $0\neq \phi:X\longrightarrow Y$ in $\Acal$, $\Hom_\Acal(G,\phi)\neq 0$.
\end{itemize}
It is well-known that in an AB3 abelian category $\Acal$, an object $G$ is a generator if and only every object $X$ is isomorphic to a quotient of a set-indexed coproduct of copies of $G$ (i.e., there is an epimorphism $G^{(I)}\longrightarrow X$ for some set $I$). Note also that in an AB3 (respectively, AB3*) abelian category, there are $\Lambda$-colimits (respectively, $\Lambda$-limits) for all small categories $\Lambda$.

\subsection{Torsion pairs in abelian categories}\label{tp abelian} A pair of full subcategories $(\Tcal,\Fcal)$ of an abelian category $\Acal$ is said to be a \textbf{torsion pair} if 
\begin{enumerate}
\item $\Hom_\Acal(\Tcal,\Fcal)=0$.
\item For any $A$ in $\Acal$, there are $T$ in $\Tcal$, $F$ in $\Fcal$ and a short exact sequence
$$\xymatrix{0\ar[r]&T\ar[r]&A\ar[r]&F\ar[r]&0.}$$
\end{enumerate}
We then say that $\Tcal$ is a \textbf{torsion class} and $\Fcal$ is a \textbf{torsionfree class}. In the sequence above $T$ and $F$ depend functorially on $A$, and the respective functors are called the \textbf{torsion radical} and the \textbf{torsion coradical} functors. If there are two torsion pairs of the form $(\Ccal,\Tcal)$ and $(\Tcal,\Fcal)$, then $\Tcal$ is both a torsion and a torsionfree class and we call it a \textbf{TTF class}. The triple $(\Ccal,\Tcal,\Fcal)$ is then called a \textbf{TTF triple}.

If $\Acal$ is well-powered, AB3 and AB3*, it follows from \cite[Theorem 2.3]{Dickson} that a subcategory $\Tcal$ is a torsion class if and only if it is closed under epimorphic images, extensions and coproducts. Similarly, a subcategory $\Fcal$ is a torsionfree class if and only if it is closed under subobjects, extensions and products. For further information on torsion pairs in abelian categories, we refer the reader to \cite[Chapter VI]{St}.

\subsection{Torsion pairs in triangulated categories} Torsion pairs can also be defined for triangulated categories. A pair of full subcategories $(\Ucal,\Vcal)$ of a triangulated category $\Dcal$ is a \textbf{torsion pair} if 
\begin{enumerate}
\item Both $\Ucal$ and $\Vcal$ are closed under direct summands.
\item $\Hom_\Dcal(\Ucal,\Vcal)=0$.
\item For any $X$ in $\Dcal$, there are $U$ in $\Ucal$, $V$ in $\Vcal$ and a triangle
$$\xymatrix{U\ar[r]&X\ar[r]&V\ar[r]&U[1].}$$
\end{enumerate}
A torsion pair $(\Ucal,\Vcal)$ is said to be a \textbf{t-structure} if $\Ucal[1]\subseteq \Ucal$ (see \cite{BBD}), a \textbf{co-t-structure} if $\Ucal[-1]\subseteq \Ucal$ (see \cite{Bondarko, Pauk}), and \textbf{nondegenerate} if 
$$\bigcap\limits_{n\in\mathbb{Z}}\Ucal[n]=0=\bigcap\limits_{n\in\mathbb{Z}}\Vcal[n].$$
If $(\Ucal,\Vcal)$ is a t-structure, then its heart, defined as $\Ucal\cap \Vcal[1]$, is an abelian category and there is a cohomological functor $H^0:\Dcal\longrightarrow \Hcal$ (see \cite{BBD}). If $\Dcal$ admits coproducts, given a t-structure $(\Ucal,\Vcal)$, its heart is AB3 (\cite[Proposition 3.2]{PaSa}) and, if $\Vcal$ is closed under coproducts, we say that $(\Ucal,\Vcal)$ is \textbf{smashing}. In that case, the heart is even AB4 and $H^0$ commutes with coproducts (\cite[Proposition 3.3]{PaSa}).
 
A triple $(\Ucal,\Vcal,\Wcal)$ is said to be a \textbf{TTF triple} (and $\Vcal$ is called a \textbf{TTF class}) if both $(\Ucal,\Vcal)$ and $(\Vcal,\Wcal)$ are torsion pairs. Note that for such a triple, if $\Vcal[-1]\subseteq \Vcal$ then $(\Ucal,\Vcal)$ is a t-structure and $(\Vcal,\Wcal)$ is a co-t-structure, in which case we say that the triple is a \textbf{cosuspended TTF triple}.

\subsection{Recollements of abelian and triangulated categories}\label{subs rec}
A \textbf{recollement} of an abelian (respectively, triangulated) category $\Acal$ by abelian (respectively, triangulated) categories $\Bcal$ and $\Ccal$ is a diagram of additive functors as follows, satisfying the conditions below.
\begin{equation}\label{rec}\Rcal\colon\ \ \ \ \   \xymatrix@C=0.5cm{
\Ycal \ar[rrr]^{i_*} &&& \Acal \ar[rrr]^{j^*}  \ar @/_1.5pc/[lll]_{i^*}  \ar
 @/^1.5pc/[lll]_{i^!} &&& \Xcal
\ar @/_1.5pc/[lll]_{j_!} \ar
 @/^1.5pc/[lll]_{j_*}
 } 
\end{equation}

\begin{enumerate}
\item $(j_!,j^*,j_*)$ and $(i^*,i_*,i^!)$ are adjoint triples;

\item The functors $i_*$, $j_!$, and $j_*$ are fully faithful;

\item $\Img(i_*)=\Ker(j^*)$. 
\end{enumerate}

Recall that the units and counits of the adjunctions in a recollement of abelian (respectively, triangulated) categories give rise, for any object $M$ of $\Acal$, to canonical exact sequences (respectively, triangles). In the case of a recollement of abelian categories, we have exact sequences of the form
$$\xymatrix{0\ar[r]& i_*i^!M\ar[r]^{\omega_M} &M \ar[r]^{\mu_M}&j_*j^*M}\ \ \ \ \ \ \ \ \ \xymatrix{j_!j^*M\ar[r]^{\nu_M}& M\ar[r]^{\gamma_M}& i_*i^*M\ar[r]& 0}$$
where the cokernel of the last map of $\mu_M$ and the kernel of $\nu_M$ are objects lying in $\Img(i_*)$. In the case of a recollement of triangulated categories, we have triangles of the form
$$\xymatrix{ i_*i^!M\ar[r] &M \ar[r]&j_*j^*M\ar[r]&i_*i^!M[1]}\ \ \ \ \ \ \ \ \ \xymatrix{j_!j^*M\ar[r]& M\ar[r]& i_*i^*M\ar[r]&j_!j^*M[1]}$$
For further details on recollements of triangulated (respectively, abelian) categories we refer to \cite{BBD} (respectively, \cite{FP}). For a detailed study of the properties of recollements of both abelian and triangulated categories, we refer to \cite{Ps}.

Recall also that every recollement of abelian categories gives rise to a TTF triple in $\Acal$, whose notation we will keep throughout: 
$$(\Ccal,\Tcal,\Fcal):=(\Ker(i^*),\Img(i_*),\Ker(i^!)).$$
The torsion radical of $(\Ccal,\Tcal)$ is $c:=\Img(\nu)$ (with torsion coradical given by $i_*i^*$) and the torsion radical of $(\Tcal,\Fcal)$ is $i_*i^!$ (with torsion coradical given by $f:=\Img(\mu)$). See \cite[Theorem 4.3]{PV} for details. There is a similar correspondence between recollements and TTF triples in the triangulated setting (see \cite{BR}), but we will not make explicit use of it in this paper.

The correspondence between recollements of abelian categories and TTF triples allows to transfer certain properties of the functors in a recollement to those of the associated TTF triple. We would like to highlight the following easy properties.

\begin{lemma}\label{exact TTF}
Let $\Rcal$ be a recollement of abelian categories as in (\ref{rec}). Then we have that
\begin{enumerate}
\item $i^*$ is exact if and only if $\Ccal\subseteq \Fcal$;
\item $i^!$ is exact if and only if $\Fcal\subseteq \Ccal$.
\end{enumerate}
\end{lemma}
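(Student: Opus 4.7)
The strategy is to exploit the torsion-theoretic structure of the TTF triple $(\Ccal,\Tcal,\Fcal)$ together with the vanishing identities $\Ccal\cap\Tcal=0$ and $\Tcal\cap\Fcal=0$, both of which follow from the fully faithfulness of $i_*$ (which forces $i^*i_*\cong\textup{id}_\Ycal\cong i^!i_*$). Moreover, being a left adjoint, $i^*$ is automatically right exact, so statement (1) amounts to showing that $i^*$ preserves monomorphisms; dually, $i^!$ is left exact, and statement (2) amounts to showing that $i^!$ preserves epimorphisms.

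For the forward direction of (1), exactness of $i^*$ immediately implies that $\Ccal=\Ker(i^*)$ is closed under subobjects. Given $C\in\Ccal$, the canonical exact sequence $0\to i_*i^!C\to C\to j_*j^*C$ exhibits $i_*i^!C$ as a subobject of $C$, hence places $i_*i^!C$ in $\Ccal\cap\Tcal=0$; by fully faithfulness of $i_*$ this gives $i^!C=0$, i.e.\ $C\in\Fcal$.

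For the converse direction of (1), given a monomorphism $\phi\colon A\to B$, I would apply the snake lemma to the commutative diagram formed by the functorial torsion decompositions of $A$ and $B$ with respect to $(\Ccal,\Tcal)$:
\[
\xymatrix@C=1.7pc{
0 \ar[r] & c(A) \ar[r]\ar[d]^{c(\phi)} & A \ar[r]\ar[d]^{\phi} & i_*i^*A \ar[r]\ar[d]^{i_*(i^*\phi)} & 0 \\
0 \ar[r] & c(B) \ar[r] & B \ar[r] & i_*i^*B \ar[r] & 0.
}
\]
Since $\phi$ is monic, so is $c(\phi)$, and the snake sequence produces an inclusion $i_*\Ker(i^*\phi)\hookrightarrow\Coker(c(\phi))$. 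Now $\Coker(c(\phi))$ is a quotient of $c(B)\in\Ccal$, hence lies in $\Ccal$ (torsion classes being closed under quotients); the hypothesis $\Ccal\subseteq\Fcal$ combined with the closure of $\Fcal$ under subobjects then forces $i_*\Ker(i^*\phi)\in\Tcal\cap\Fcal=0$, whence $i^*\phi$ is monic.

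Part (2) proceeds by the dual argument, using instead the functorial torsion decompositions with respect to $(\Tcal,\Fcal)$ (whose torsion radical is $i_*i^!$ and whose coradical is $\Img(\mu)$) and the intersection $\Ccal\cap\Tcal=0$. The main subtlety in both converse directions is to correctly identify the snake-lemma output as a subobject of an object lying in the class dictated by the hypothesis, and then invoke the appropriate vanishing intersection.
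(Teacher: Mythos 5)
Your proof is correct and follows essentially the same route as the paper's: the forward direction uses that $\Ker(i^*)$ is closed under subobjects together with $\Ccal\cap\Tcal=0$, and the converse applies the snake lemma to the $(\Ccal,\Tcal)$-torsion decompositions to embed $\Ker(i_*i^*\phi)\in\Tcal$ into $\Coker(c(\phi))\in\Ccal\subseteq\Fcal$, forcing it to vanish. The dual argument for (2) is likewise what the paper intends.
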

\begin{proof}
We prove statement (1). Statement (2) can be shown analogously. Suppose that $i^*$ is exact. Then class $\Ccal=\Ker(i^*)$ is closed under both quotients and subobjects. Hence, the torsion decomposition of an object $C$ in $\Ccal$ relative to the pair $(\Tcal,\Fcal)$ must lie entirely in $\Ccal$, showing that $t(C)=0$ and $C$ lies in $\Fcal$. 

Conversely, suppose that $\Ccal\subseteq \Fcal$. 
Now, given any monomorphism $g:X\longrightarrow Y$, the snake lemma guarantees a monomorphism $\Ker(i_*i^*g)\longrightarrow \Coker(c(g))$. The object $\Coker(c(g))$ clearly lies in $\Ccal$ and, therefore, it also lies in $\Fcal$. Since $\Ker(i_*i^*g)$ lies in $\Tcal$ we conclude that $\Ker(i_*i^*g)=0$ and, therefore, $i^*g$ is a monomorphism, as wanted.
\end{proof}

\subsection{Recollements of hearts}\label{glue}
Consider a recollement of triangulated categories of the form 
$$\xymatrix@C=0.5cm{
\Ycal \ar[rrr]^{i_*} &&& \Dcal \ar[rrr]^{j^*}  \ar @/_1.5pc/[lll]_{i^*}  \ar
 @/^1.5pc/[lll]_{i^!} &&& \Xcal
\ar @/_1.5pc/[lll]_{j_!} \ar
 @/^1.5pc/[lll]_{j_*}
 } $$
Suppose that $(\Ucal_\Ycal, \Vcal_\Ycal)$ and $(\Ucal_\Xcal,\Vcal_\Xcal)$ are torsion pairs in $\Ycal$ and $\Xcal$, respectively. Then, following \cite{BBD}, there is a torsion pair $(\Ucal_\Dcal,\Vcal_\Dcal)$ in $\Dcal$ defined by
\[
\Ucal_\Dcal=\big\{Z\in \Dcal \ | \ j^*(Z)\in \Ucal_\Xcal,\ i^*(Z)\in \Ucal_\Ycal \big\}, \ \ \
\Vcal_\Dcal=\big\{Z\in \Dcal \ | \ j^*(Z)\in \Vcal_\Xcal, \ i^!(X)\in \Vcal_\Ycal \big\}.
\]
It is easy to observe that the torsion pair $(\Ucal_\Dcal,\Vcal_\Dcal)$ is a t-structure (respectively, a co-t-structure) if and only if the torsion pairs $(\Ucal_\Xcal,\Vcal_\Xcal)$ and $(\Ucal_\Ycal,\Vcal_\Ycal)$ are also t-structures (respectively, co-t-structures).

In the t-structure case, the recollement of triangulated categories above induces a recollement of abelian categories relating the hearts of the given t-structures and the heart of the glued t-structure (we suggestively denote them by $\Hcal_\Ycal$, $\Hcal_\Dcal$ and $\Hcal_\Xcal$). We refer to \cite{BBD} for details. Indeed, consider the cohomological functors $H^0_{\Ycal}\colon \Ycal\longrightarrow \mathcal{H}_\Ycal$, $H^0_{\Xcal}\colon \Xcal\longrightarrow \mathcal{H}_\Xcal$ and $H^0_{\Dcal}\colon \Dcal\longrightarrow \mathcal{H}_\Dcal$, and the full embeddings $\epsilon_\Ycal$, $\epsilon_\Xcal$ and $\epsilon_\Dcal$ of the hearts into the corresponding triangulated categories. Then the induced recollement of hearts can be written as follows.
\[
\xymatrix@C=0.5cm{
\Hcal_\Ycal \ar[rrr]^{I_*} &&& \Hcal_\Dcal \ar[rrr]^{J^*}  \ar @/_1.5pc/[lll]_{I^*}  \ar
 @/^1.5pc/[lll]_{I^!} &&& \Hcal_\Xcal
\ar @/_1.5pc/[lll]_{J_!} \ar
 @/^1.5pc/[lll]_{J_*}
 }\ \ \ \ \ 
{\begin{array}{rlrl}
I^*\hspace{-0.25cm}&=H^0_\Ycal \circ i^*\circ \epsilon_\Dcal       \hspace{1.0cm}  & J_!\hspace{-0.25cm}&=H^0_\Dcal\circ j_!\circ\epsilon_\Xcal\\\noalign{\vspace{0.2cm}}
I_*\hspace{-0.25cm}&=H^0_\Dcal\circ i_*\circ \epsilon_\Ycal  \hspace{1.0cm}   & J^*\hspace{-0.25cm}&=H^0_\Xcal\circ j^*\circ\epsilon_\Dcal \\\noalign{\vspace{0.2cm}}
I^!\hspace{-0.25cm}&=H^0_\Ycal \circ i^!\circ \epsilon_\Dcal  \hspace{1.0cm}   & J_*\hspace{-0.25cm}&=H^0_\Dcal\circ j_*\circ\epsilon_\Xcal.
\end{array}}
\]
In other words, we have the following diagram:
\begin{equation}\nonumber
\xymatrix@C=0.5cm{ \Ycal \ar[ddd]_{H^0_\Ycal} \ar[rrr]^{i_*} &&& \Dcal \ar[ddd]_{H^0_\Dcal} \ar[rrr]^{j^*}  \ar @/_1.5pc/[lll]_{i^*}  \ar @/^1.5pc/[lll]^{i^!} &&& \Xcal \ar[ddd]_{H^0_\Xcal}
\ar @/_1.5pc/[lll]_{j_!} \ar
 @/^1.5pc/[lll]^{j_*} \\ 
 &&& &&& \\
 &&& &&& \\
\Hcal_\Ycal \ar @/_1.5pc/[uuu]_{\epsilon_{\Ycal}} \ar[rrr]^{I_*} &&& \Hcal_\Dcal \ar @/_1.5pc/[uuu]_{\epsilon_{\Dcal}} \ar[rrr]^{J^*}  \ar @/_1.5pc/[lll]_{I^*}  \ar
 @/^1.5pc/[lll]^{I^!} &&& \mathcal{H}_\Xcal \ar @/_1.5pc/[uuu]_{\epsilon_{\Xcal}}
\ar @/_1.5pc/[lll]_{J_!} \ar
 @/^1.5pc/[lll]^{J_*}  } 
\end{equation}
where the functors in the lower recollement are defined using the vertical functors as described above. A recollement of abelian categories obtained in this way will be called \textbf{a recollement of hearts}. 

Given an arbitrary recollement of abelian categories, we may then ask when does it arise as a recollement of hearts. The following propositions provide some sufficient conditions for this to happen. Recall that the derived category $D(\Acal)$ of an abelian category $\Acal$ (when it exists, i.e. when morphisms between any two given objects form a set) is said to be \textbf{left-complete} if, for all $X$ in $D(\Acal)$, the Milnor limit $\mathsf{Mlim}\tau^{\geq n}X$ (of the truncations associated with the standard t-structure in $D(\Acal)$) coincides with the object $X$. As shown by Neeman (\cite{Neeman}), this is a non-trivial condition. 

\begin{proposition}\label{rec hearts suf}
Let $\Rcal$ be a recollement of abelian categories as in (\ref{rec}). Assume that 
\begin{itemize}
\item $\Xcal$ is Grothendieck; 
\item $\Acal$ has an injective cogenerator;
\item $D(\Acal)$ exists, it admits products and coproducts, and it is left-complete;
\item the derived functor $j^*:D(\Acal)\longrightarrow D(\Xcal)$ commutes with products and coproducts;
\end{itemize}
Then $\Rcal$ is a recollement of hearts of smashing t-structures. 
\end{proposition}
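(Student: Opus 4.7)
The plan is to lift the abelian recollement $\Rcal$ to a recollement of unbounded derived categories and then recover $\Rcal$ as the recollement of hearts obtained by applying the BBD glueing of Subsection \ref{glue} to the standard t-structures on $D(\Ycal)$ and $D(\Xcal)$; the smashing property of the three t-structures is then read off this identification.

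First I would derive the six functors of $\Rcal$. Since $j^*$ is exact in $\Rcal$ (being simultaneously a left and right adjoint), it lifts directly to a triangulated functor $Dj^*\colon D(\Acal)\longrightarrow D(\Xcal)$ which by hypothesis preserves products and coproducts. Granted that $D(\Acal)$ exists, admits set-indexed (co)products and is left-complete, Brown representability together with its dual should produce a left adjoint $Lj_!$ and a right adjoint $Rj_*$ to $Dj^*$; the injective cogenerator of $\Acal$ furnishes the resolutions that identify $Rj_*$ on bounded-below complexes with the classical right derived functor, and left-completeness extends this identification to the whole of $D(\Acal)$. A parallel argument, applied on the other side of the recollement, yields a derived adjoint triple $(Li^*,Ri_*,Ri^!)$. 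I would then verify that the six derived functors form a recollement of triangulated categories: fully faithfulness of $Ri_*,Lj_!,Rj_*$ and the vanishing $Dj^*\circ Ri_*=0$ reduce, via truncation and left-completeness, to the analogous properties in $\Rcal$.

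Once the derived recollement is in place, the BBD glueing of Subsection \ref{glue} applied to the standard t-structures on $D(\Ycal)$ and $D(\Xcal)$ produces a t-structure on $D(\Acal)$, whose aisle consists of those $Z$ with $Dj^*Z\in D^{\leq 0}(\Xcal)$ and $Li^*Z\in D^{\leq 0}(\Ycal)$. Using the canonical triangle $Lj_!Dj^*Z\longrightarrow Z\longrightarrow Ri_*Li^*Z$ together with the exactness of $j^*$ and $i_*$ in $\Rcal$ (which makes $Dj^*$ t-exact, $Lj_!$ right t-exact and $Ri_*$ t-exact), this aisle is seen to coincide with $D^{\leq 0}(\Acal)$. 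Hence the glued t-structure is the standard one, its heart is $\Acal$, and matching each derived functor with its abelian counterpart on $\Acal\hookrightarrow D(\Acal)$ identifies the induced recollement of hearts with $\Rcal$. Smashing then reduces to closure of the coaisles under coproducts in each derived category: for $D(\Xcal)$ this is the classical Grothendieck case; for $D(\Acal)$ it follows from the fact that $Dj^*$ and $Li^*$ commute with coproducts and jointly detect $D^{\geq 0}(\Acal)$ via the canonical triangles above; for $D(\Ycal)$ it is deduced from the description of the glued aisle together with the compatibility of $Ri^!$ and $Dj^*$ with the coproduct calculations.

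The principal obstacle is the first step: the unbounded derived category of a general abelian category is rarely well behaved, and adjoints to derived functors typically require stringent hypotheses to exist. The conditions of the proposition are calibrated precisely to this purpose: the injective cogenerator allows classical right derivation of $j_*$, left-completeness reduces representability questions about unbounded complexes to their truncations, and preservation of products and coproducts by $j^*$ is exactly what Brown representability (and its dual) feed on. Once the derived recollement is secured, the identification of the glued t-structure with the standard one and the resulting identification of the recollement of hearts with $\Rcal$ amount to routine t-structure and cohomology bookkeeping.
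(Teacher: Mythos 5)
Your overall strategy---lift $\Rcal$ to a recollement of derived categories, glue the standard t-structures, and identify the resulting recollement of hearts with $\Rcal$---is the same as the paper's, and your treatment of the $j$-side is essentially right (the paper obtains the right adjoint of the derived $j^*$ from $\Xcal$ being Grothendieck via \cite[Lemma 10]{Murfet}, and the left adjoint from the injective cogenerator, left-completeness and preservation of products via \cite[Theorem 1.1]{Modoi} and \cite[Theorem 8.4.4]{Neemantria}). The genuine gap is the one-sentence claim that ``a parallel argument, applied on the other side of the recollement, yields a derived adjoint triple $(Li^*,Ri_*,Ri^!)$'' with $D(\Ycal)$ as the kernel term. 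The hypotheses of the proposition are entirely asymmetric: nothing is assumed about $\Ycal$, about the derived functor of $i_*$ preserving (co)products, or---crucially---about $i_*$ inducing isomorphisms on all $\Ext^n$. Without the latter, the derived functor of $i_*$ need not be fully faithful and its essential image need not be $\Ker(Dj^*)$, so there is in general no derived recollement with $D(\Ycal)$ on the left. This is precisely the content of the separate Proposition \ref{rec hearts suf2}, whose hypotheses are calibrated to make your ``parallel argument'' work; they are not available here.

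The paper circumvents this by never deriving the $i$-functors. It invokes \cite[Lemma 5.9]{Krause2} to obtain a localization sequence $D_\Ycal(\Acal)\longrightarrow D(\Acal)\longrightarrow D(\Xcal)$ whose kernel is the full subcategory $D_\Ycal(\Acal)$ of complexes with cohomologies in $i_*(\Ycal)$ (not $D(\Ycal)$), produces the adjoints on the $j$-side as you do, and then gets the adjoints of the inclusion for free from the dual of \cite[Theorem 2.1(i)]{CPS}. The standard t-structure of $D(\Acal)$ restricts to $D_\Ycal(\Acal)$ with heart $i_*(\Ycal)\simeq\Ycal$, so the glued recollement of hearts is still $\Rcal$. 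Finally, your smashing argument for $D(\Acal)$ is shakier than needed: the coaisle of a glued t-structure is detected by $j^*$ and $i^!$, and $i^!$ need not commute with coproducts. The paper instead notes that an injective cogenerator forces $\Acal$ to be AB4 (\cite[Corollary 4.12]{PV2}), whence the standard coaisle of $D(\Acal)$ is closed under coproducts directly.
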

\begin{proof}
It follows from \cite[Lemma 5.9]{Krause2} that there is a \textit{short exact sequence} of triangulated categories
$$\xymatrix{D_\Ycal(\Acal)\ar[r]^{\iota}&D(\Acal)\ar[r]^{j^*}&D(\Xcal)}$$
where $j^*$ denotes the derived functor of the corresponding (exact) functor in the recollement $\Rcal$ and $\iota$ denotes the inclusion of the full subcategory $D_\Ycal(\Acal):=\{X\in D(\Acal): H^n(X)\in i_*(\Ycal),\forall n\in\mathbb{Z}\}$ into $D(\Acal)$. By \textit{short exact sequence} we mean that $j^*$ induces a triangle equivalence between $D(\Acal)/D_\Ycal(\Acal)$ and $D(\Xcal)$. To get a recollement of triangulated categories out of this sequence we need only guarantee the existence of left and right adjoints to $j^*$ and to $\iota$. Since $\Xcal$ is Grothendieck, it is well-known (see, for example, \cite[Lemma 10]{Murfet}) that $j^*$ has a right adjoint. Also, since $\Acal$ has an injective cogenerator, $D(\Acal)$ is left-complete and $j^*$ commutes with products, it follows from a combination of \cite[Theorem 1.1]{Modoi} and \cite[Theorem 8.4.4]{Neemantria} that $j^*$ has a left adjoint. The existence of adjoints for $\iota$ then follows from the dual statement of \cite[Theorem 2.1(i)]{CPS}. Finally, note that the standard t-structure in $D(\Acal)$ is obtained by glueing t-structures in $D(\Ycal)$ and $D(\Xcal)$, since $j^*$ is t-exact with respect to the standard t-structures in $D(\Acal)$ and $D(\Ccal)$ (see \cite[Proposition 1.4.12]{BBD}). Moreover, the corresponding recollement of hearts is obtained precisely from the functor $j^*:\Acal\longrightarrow \Xcal$ and, thus, it must coincide with $\Rcal$. Finally, it follows from \cite[Corollary 4.12]{PV2} that, since $\Acal$ has an injective cogenerator, $\Acal$ is AB4. The standard t-structure in $D(\Acal)$ is therefore smashing and, thus, the cohomology functor commutes with coproducts.
\end{proof}

Slightly different sufficient conditions of a recollement of abelian categories to be a recollement of hearts can be found in \cite[Theorem 6.10]{PV2}.

\begin{proposition}\label{rec hearts suf2}
Let $\Rcal$ be a recollement of abelian categories as in (\ref{rec}). Assume that 
\begin{itemize}
\item $\Ycal$ is Grothendieck and $D(\Ycal)$ is left-complete;
\item $D(\Xcal)$ and $D(\Acal)$ exist and admit products and coproducts;
\item $D(\Acal)$ is left-complete;
\item the functor $i_*:\Ycal\longrightarrow \Acal$ induces isomorphisms $\mathsf{Ext}^n_\Ycal(M,N)\cong\mathsf{Ext}^n_\Acal(i_*M,i_*N)$ for all $M$, $N$ in $\Ycal$ and $n\geq 0$, and its derived functor $i_*:D(\Ycal)\longrightarrow D(\Acal)$  commutes with products and coproducts.
\end{itemize}
Then $\Rcal$ is a recollement of hearts of t-structures. If $\Acal$ is AB4, then these t-structures are smashing.
\end{proposition}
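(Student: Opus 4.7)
The plan is to adapt the argument of Proposition \ref{rec hearts suf} with the roles of $\Xcal$ and $\Ycal$ swapped: instead of using a Grothendieck hypothesis on $\Xcal$ to obtain a short exact sequence of derived categories via \cite[Lemma 5.9]{Krause2}, I would use the Ext-preservation hypothesis on $i_*$ to directly realise $D(\Ycal)$ as a triangulated subcategory of $D(\Acal)$. The target is a recollement of triangulated categories
\[
\xymatrix@C=0.5cm{
D(\Ycal) \ar[rrr]^{i_*} &&& D(\Acal) \ar[rrr]^{j^*}  \ar @/_1.5pc/[lll]  \ar
 @/^1.5pc/[lll] &&& D(\Xcal)
\ar @/_1.5pc/[lll] \ar
 @/^1.5pc/[lll]
 }
\]
lifting $\Rcal$, in which the standard t-structures are related by glueing, and whose induced recollement of hearts recovers $\Rcal$.

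First I would observe that $i_*:\Ycal\longrightarrow \Acal$ is exact (being the middle term of two adjoint triples), so it prolongs termwise to a triangulated functor $i_*:D(\Ycal)\longrightarrow D(\Acal)$. The Ext-isomorphism hypothesis, combined with fully faithfulness of $i_*$ at the abelian level and a standard dévissage along the standard t-structure in $D(\Ycal)$, forces this derived $i_*$ to be fully faithful; its essential image is $D_\Ycal(\Acal):=\{X\in D(\Acal):H^n(X)\in i_*(\Ycal),\forall n\in\mathbb{Z}\}$. As in Proposition \ref{rec hearts suf}, the exact functor $j^*:\Acal\longrightarrow \Xcal$ derives to a triangle quotient functor $j^*:D(\Acal)\longrightarrow D(\Xcal)$ whose kernel is precisely $D_\Ycal(\Acal)$, yielding a short exact sequence of triangulated categories
\[
\xymatrix{D(\Ycal)\ar[r]^{i_*}&D(\Acal)\ar[r]^{j^*}&D(\Xcal).}
\]

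Next I would promote this to a recollement. Since $D(\Ycal)$ is the derived category of a Grothendieck category (hence admits products and coproducts and is amenable to Brown representability via \cite[Theorem 8.4.4]{Neemantria}), $D(\Acal)$ has products and coproducts and is left-complete, and $i_*$ commutes with both, the combination of \cite[Theorem 1.1]{Modoi} and \cite[Theorem 8.4.4]{Neemantria} yields a left adjoint to $i_*$, while a right adjoint is obtained dually; alternatively, once one adjoint to $i_*$ is in hand, the dual of \cite[Theorem 2.1(i)]{CPS} supplies both left and right adjoints to $j^*$ and the remaining adjoint to $i_*$. Exactness of $i_*$ and $j^*$ on the abelian level makes their derived counterparts t-exact for the standard t-structures, so by \cite[Proposition 1.4.12]{BBD} the standard t-structure on $D(\Acal)$ is glued from the standard t-structures on $D(\Ycal)$ and $D(\Xcal)$; the induced recollement of hearts is then built from the abelian $i_*$ and $j^*$, so it coincides with $\Rcal$. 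Finally, if $\Acal$ is AB4 then coproducts in $\Acal$ are exact, hence the coaisle of the standard t-structure on $D(\Acal)$ is closed under coproducts, i.e.\ this t-structure is smashing, and the glueing description transfers smashingness to $D(\Ycal)$ and $D(\Xcal)$.

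The main technical hurdle is the identification of $D(\Ycal)$ with the triangulated subcategory $D_\Ycal(\Acal)$ of $D(\Acal)$: for this the Ext-preservation hypothesis is essential, since it upgrades fully faithfulness of $i_*$ from the heart to all shifts, and the left-completeness of $D(\Ycal)$ and $D(\Acal)$ is precisely what allows this to be extended from bounded to unbounded complexes. Once this identification and the identification of the Verdier quotient $D(\Acal)/D_\Ycal(\Acal)$ with $D(\Xcal)$ are in place, the rest of the argument is a direct adaptation of Proposition \ref{rec hearts suf}.
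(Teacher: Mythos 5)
The paper does not actually supply a proof of this proposition: it is stated as a companion to Proposition \ref{rec hearts suf}, with the preceding sentence deferring to \cite[Theorem 6.10]{PV2} for closely related conditions, so there is no in-text argument to compare against line by line. Your reconstruction is the evident intended one --- the mirror image of the proof of Proposition \ref{rec hearts suf}, anchoring the short exact sequence of derived categories at the $\Ycal$-end instead of the $\Xcal$-end --- and it is sound. You use the hypotheses in the right places: the Ext-isomorphisms give full faithfulness of the derived $i_*$ on bounded complexes by d\'evissage, commutation with coproducts extends this to bounded-below complexes via homotopy colimits of truncations, and commutation with products together with left-completeness of $D(\Ycal)$ and $D(\Acal)$ handles unbounded complexes via Milnor limits, identifying the essential image with $D_\Ycal(\Acal)$; the identification $D(\Acal)/D_\Ycal(\Acal)\simeq D(\Xcal)$ again follows from \cite[Lemma 5.9]{Krause2}, which only needs $i_*(\Ycal)$ to be a localizing Serre subcategory --- automatic in a recollement since $j^*$ has both adjoints at the abelian level, so the Grothendieck hypothesis on $\Xcal$ used in Proposition \ref{rec hearts suf} is genuinely not needed here. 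Two small imprecisions, neither fatal: the right adjoint of $i_*\colon D(\Ycal)\longrightarrow D(\Acal)$ is not obtained ``dually'' but from ordinary Brown representability ($D(\Ycal)$ is well generated because $\Ycal$ is Grothendieck, by \cite{Neeman0}, and $i_*$ preserves coproducts), whereas it is the \emph{left} adjoint that needs \cite[Theorem 1.1]{Modoi} (injective cogenerator of $\Ycal$ plus left-completeness of $D(\Ycal)$) combined with \cite[Theorem 8.4.4]{Neemantria}; and smashingness of the outer t-structures is not ``transferred'' from the glued one but follows because $\Xcal$ and $\Ycal$ are AB4 whenever $\Acal$ is, by Proposition \ref{restriction AB5}.
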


\section{Glueing AB-properties of abelian categories}

Recall our running assumption that we assume that all our abelian categories have the property that the Yoneda Ext-group between any two objects forms a set.

\subsection{Well-powered}
One of the most basic properties one may ask from an abelian category is that every object has only a set of subobjects (rather than a class). More precisely, for any object $M$ in an abelian category, consider the category $\mathsf{Sub}(M)$, whose objects are subobjects of $M$ and morphisms are the morphisms in $\Acal$ between such subobjects making the obvious triangle of inclusions commute. 

\begin{definition}\label{well-p}
We say that an abelian category $\Acal$ is \textbf{well-powered} if for any $M$ in $\Acal$, the category $\mathsf{Sub}(M)$ is small, i.e. $\mathsf{Sub}(M)$ has only a set of isomorphism classes.
\end{definition}

This property is useful, for example, in order to describe of torsion and torsionfree classes in an abelian category in terms of closure properties, as proved in \cite{Dickson} (see Subsection \ref{tp abelian}). We begin by proving that this property behaves well under glueing.

\begin{proposition}\label{well-p Prop}
Let $\Rcal$ be a recollement of abelian categories as in (\ref{rec}). Then $\Acal$ is well-powered if and only if $\Xcal$ and $\Ycal$ are well-powered.
\end{proposition}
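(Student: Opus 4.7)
The plan is to attack both directions by exploiting the functorial data in the recollement, and in particular the fact that $\Tcal = \Img(i_*) = \Ker(j^*)$ is a Serre subcategory of $\Acal$ equivalent to $\Ycal$ via $i_*$.

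For the forward implication, assume $\Acal$ is well-powered. Since $i_*$ is exact (admitting both adjoints) and fully faithful, it sends any monomorphism $Y' \hookrightarrow Y$ in $\Ycal$ to a monomorphism $i_*Y' \hookrightarrow i_*Y$ in $\Acal$, and by fully-faithfulness it reflects isomorphism classes of subobjects; hence $\mathsf{Sub}_\Ycal(Y)$ embeds into $\mathsf{Sub}_\Acal(i_*Y)$ for every $Y$ in $\Ycal$. The same argument applies to $\Xcal$ via the fully faithful functor $j_*$, which preserves monomorphisms as a right adjoint. Well-poweredness of $\Xcal$ and $\Ycal$ then follows.

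For the reverse implication, assume $\Xcal$ and $\Ycal$ are well-powered and fix $A$ in $\Acal$. The exact functor $j^*$ yields a map $\Phi\colon \mathsf{Sub}(A) \longrightarrow \mathsf{Sub}_\Xcal(j^*A)$, $A' \mapsto j^*A'$, whose codomain is a set by assumption; it is enough to bound each fibre $\Phi^{-1}(X')$. The key input is a Serre quotient correspondence: the $\Tcal$-closed subobjects of $A$ (i.e. those $A' \subseteq A$ with $A/A' \in \Fcal$) are in bijection with subobjects of $j^*A$ in $\Xcal$. Concretely, given $X' \subseteq j^*A$, the unique $\Tcal$-closed subobject $\overline{A} \subseteq A$ with $j^*\overline{A} = X'$ can be realised as $\Ker(A \to j_*(j^*A/X'))$, where the identity $i^!j_* = 0$ ensures that $A/\overline{A}$ lies in $\Fcal$.

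With $\overline{A}$ in hand, every $A'$ in the fibre $\Phi^{-1}(X')$ must be contained in $\overline{A}$ (being contained in its $\Tcal$-closure) and satisfy $\overline{A}/A' \in \Tcal$; conversely, every subobject of $\overline{A}$ whose quotient lies in $\Tcal$ belongs to the fibre. Hence $\Phi^{-1}(X')$ is in bijection with the quotients of $\overline{A}$ lying in $\Tcal$ which, via the equivalence $i_*\colon \Ycal \simeq \Tcal$ together with the right-exactness of $i^*$, correspond bijectively with the quotient objects of $i^*\overline{A}$ in $\Ycal$. Since well-poweredness in an abelian category is equivalent to co-well-poweredness (via $M' \leftrightarrow M/M'$), this is a set, so $\mathsf{Sub}(A)$ is a set-indexed union of sets and therefore a set. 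The main technical obstacle is securing the Serre quotient correspondence between $\Tcal$-closed subobjects of $A$ and subobjects of $j^*A$, which uses the full strength of the recollement (particularly the identity $i^!j_* = 0$); once that is in place the remaining manipulations reduce to routine adjunction-chasing.
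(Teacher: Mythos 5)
Your proof is correct, but it follows a genuinely different route from the paper's. For the converse direction the paper decomposes an arbitrary subobject $A'\subseteq A$ using the TTF triple $(\Ccal,\Tcal,\Fcal)$: it first handles subobjects of objects of $\Fcal$ via the torsion decomposition for $(\Ccal,\Tcal)$ and the equivalences $j^*\colon\Ccal\cap\Fcal\simeq\Xcal$ and $i_*\colon\Ycal\simeq\Tcal$, and then treats a general $A$ by exhibiting $A'$ as an extension of a subobject of $i_*i^!A$ by a subobject of $f(A)$; crucially, at both stages it bounds the possible $A'$ by counting elements of Yoneda $\Ext^1$-groups, so it leans on the paper's running assumption that these form sets. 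You instead fibre $\mathsf{Sub}(A)$ over $\mathsf{Sub}_\Xcal(j^*A)$ via the exact functor $j^*$ and identify each fibre, through the $\Tcal$-saturation $\overline{A}=\Ker\bigl(A\to j_*(j^*A/X')\bigr)$ (where $i^!j_*=0$ gives $A/\overline{A}\in\Fcal$), with the lattice of quotients of $i^*\overline{A}$ in $\Ycal$ --- all the steps you sketch (every $A'$ in the fibre satisfies $A'\subseteq\overline{A}$ with $\overline{A}/A'\in\Tcal$, and such quotients factor through the epimorphism $\overline{A}\to i_*i^*\overline{A}$) check out. The trade-off: your argument is arguably cleaner and, notably, does not use the hypothesis that $\Ext^1$-groups are sets at all, whereas the paper's extension-counting technique is one it deliberately showcases here because it is reused later (e.g.\ in the construction of generators in Theorem \ref{TTF generator}).
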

\begin{proof}
Suppose that $\Acal$ is well-powered. Since $i_*$ and $j_*$ are fully faithful left exact functors, it follows immediately that both $\Xcal$ and $\Ycal$ are well-powered.

Conversely, suppose that $\Xcal$ and $\Ycal$ are well-powered. We first show that, for any $F$ in $\Fcal$, $\mathsf{Sub}(F)$ is small. Let $\alpha\colon F^\prime\longrightarrow F$ be a monomorphism and consider the following induced commutative diagram.

$$\xymatrix{0 \ar[r] & c(F^{\prime}) \ar[r] \ar@{=}[d] & F^{\prime} \ar[d]^\alpha \ar[r] & i_*i^*F^\prime \ar[r] \ar[d]^\beta & 0\\ 0 \ar[r] & c(F^{\prime}) \ar[r] & F \ar[r] & F/c(F^\prime) \ar[r] & 0}$$

First observe that $c(F^\prime)$ is a subobject of $c(F)$ and that they both lie in $\Ccal\cap \Fcal$. Since $j^*$ induces an (exact) equivalence between $\Ccal\cap \Fcal$ and $\Xcal$ (\cite{Ge}, \cite[Lemma I.1.3]{BR}) and since $\Xcal$ is well-powered, it follows that $\textsf{Sub}(c(F))\cap(\Ccal\cap\Fcal)$ is small. Hence, the objects $c(F^\prime)$ and, thus, $F/c(F^\prime)$ vary in a set. Now, observe that $i_*$ induces an equivalence of categories between $\textsf{Sub}(A)\cap \Tcal$ and $\textsf{Sub}(i_*i^!A)$, for any $A$ in $\Acal$. Therefore, since $\Ycal$ is well-powered, $\textsf{Sub}(F/c(F^\prime))\cap \Tcal$ is small and thus, since $\beta$ is a monomorphism (by the snake lemma), $i_*i^*F^\prime$ varies in a set. Since, by assumption on $\Acal$, Yoneda extensions between two objects form a set and since each $F^\prime$ represents an element in $\Ext^1_\Acal(i_*i^*F^\prime,c(F^\prime))$, we conclude that $\textsf{Sub}(F)$ is small, as wanted. With some abuse of notation we can more precisely say that
$$\textsf{Sub}(F)\subseteq \bigcup\limits_{X\in \textsf{Sub}(c(F))\cap (\Ccal\cap\Fcal)}\ \bigcup\limits_{Y\in \textsf{Sub}(F/c(F^\prime))\cap \Tcal} \Ext^1_{\Acal}(Y,X).$$

Now given an object $A$ in $\Acal$, we will see that any subobject $A^\prime$ of $A$ can be obtained as an extension of a subobject of $f(A)$ with a subobject of $i_*i^!A$. Since $\textsf{Sub}(f(A))$ and $\textsf{Sub}(i_*i^!A)$ are small, then so will be $\mathsf{Sub}(A)$, by an argument analogous to the one of the previous paragraph, thus completing the proof. Indeed, considering the inclusion map $\iota:A^\prime\longrightarrow A$, we can see that $K:=\Ker(f(\iota))$ (which is naturally an object in $\Fcal$) is also a subobject of $\Coker(i_*i^!(\iota))$. Hence $K$ lies in $\Fcal\cap\Tcal=0$, and $f(\iota)$ is a monomorphism, thus proving the claim.
\end{proof}

\subsection{Coproducts and products: AB3 and AB3*}
Recall the following well-known lemma.
\begin{lemma}\label{adj AB3}

Let $\Bcal$ be a full additive subcategory of an additive category $\Acal$, with inclusion functor $F:\Bcal\longrightarrow \Acal$, and let $\Lambda$ be a small category. Then:
\begin{enumerate}
\item If $F$ has a left adjoint and $\Acal$ admits $\Lambda$-colimits, then so does $\Bcal$.
\item If $F$ has a right adjoint and $\Acal$ admits $\Lambda$-limits, then so does $\Bcal$.
\end{enumerate}
In particular, If $\Acal$ is an AB3 (respectively, AB3*) abelian category, then so does $\Bcal$.
\end{lemma}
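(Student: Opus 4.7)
The plan is to use the adjoint as a retraction of $F$ and to exploit the full faithfulness of $F$ to verify universal properties via a chain of natural isomorphisms. I would prove (1) directly, derive (2) by duality, and then read off the ``In particular'' assertion by specialising to the case of a discrete small category $\Lambda$.

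For (1), let $L:\Acal\longrightarrow\Bcal$ denote the left adjoint to $F$ and fix a diagram $D:\Lambda\longrightarrow\Bcal$. The idea is that since $\Acal$ has all $\Lambda$-colimits, I can form $C:=\textsf{colim}_\Lambda(F\circ D)$ in $\Acal$, and then define a candidate colimit in $\Bcal$ by $L(C)$. To check that $L(C)$ with the structure maps $D(\lambda)\longrightarrow L(C)$ coming from applying $L$ to $\iota_\lambda\colon F D(\lambda)\longrightarrow C$ (composed with the inverse of the counit $\epsilon_{D(\lambda)}\colon LFD(\lambda)\longrightarrow D(\lambda)$, which is an isomorphism because $F$ is fully faithful) satisfies the universal property, I would compute for an arbitrary $B\in\Bcal$:
$$\Hom_\Bcal(L(C),B)\cong \Hom_\Acal(C,FB)\cong \varprojlim_\Lambda \Hom_\Acal(F\circ D,FB)\cong \varprojlim_\Lambda \Hom_\Bcal(D,B),$$
where the first isomorphism is the adjunction, the second is the universal property of the colimit in $\Acal$, and the third uses the full faithfulness of $F$. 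Checking naturality in $B$ then identifies $L(C)$ with the colimit of $D$ in $\Bcal$.

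For (2), I would apply the dual argument: if $R:\Acal\longrightarrow\Bcal$ is a right adjoint of $F$ and $D:\Lambda\longrightarrow\Bcal$ is a diagram, set $\lim_\Lambda D:=R(\textsf{lim}_\Lambda(F\circ D))$ and verify the universal property by the analogous chain of isomorphisms using that the unit $\eta\colon \textsf{id}_\Bcal\longrightarrow RF$ is an isomorphism (again by full faithfulness of $F$). Finally, to obtain the ``In particular'' clause, I would specialise $\Lambda$ to a discrete small category in (1) and (2): set-indexed coproducts (respectively products) are precisely $\Lambda$-colimits (respectively $\Lambda$-limits) for such $\Lambda$, so $\Bcal$ inherits AB3 (respectively AB3*) from $\Acal$ provided the relevant adjoint exists.

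The argument is essentially a formal manipulation of adjoints, so I do not anticipate any real obstacle; the only subtle point is to remember that full faithfulness of $F$ is what makes the counit $\epsilon:LF\Rightarrow\textsf{id}_\Bcal$ (respectively the unit $\eta:\textsf{id}_\Bcal\Rightarrow RF$) invertible, which is what makes the structure maps of the (co)limit in $\Bcal$ land correctly and the universal property go through cleanly.
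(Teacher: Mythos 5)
Your proposal is correct and follows essentially the same route as the paper: the paper also defines the colimit in $\Bcal$ as the left adjoint applied to $\textsf{colim}_\Lambda(F\circ D)$ computed in $\Acal$, leaving the verification of the universal property (which you spell out via the adjunction, the universal property in $\Acal$, and full faithfulness of $F$) as an easy check. Your explicit treatment of the structure maps via the invertible counit, and the caveat that the relevant adjoint must exist for each half of the ``in particular'' clause, are accurate refinements of the paper's terser argument.
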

\begin{proof}
Assume that $F$ has a left adjoint $G$. Let us consider $M:\Lambda \longrightarrow \Bcal$ an $\Lambda$-diagram in $\Bcal$. It is easy to check that $G(\textsf{colim}_{\Lambda}(F(M_\lambda)))$ is the $\Lambda$-colimit of $M:\Lambda \longrightarrow \Bcal$ in $\Bcal$. Part (2) follows analogously.
\end{proof}

\begin{corollary}\label{AB3}
Let $\Rcal$ be a recollement of abelian categories as in (\ref{rec}). If $\Acal$ is AB3 (respectively, AB3*), then so are the categories $\Ycal$ and $\Xcal$.
\end{corollary}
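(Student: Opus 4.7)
The plan is to deduce the corollary as an immediate application of Lemma \ref{adj AB3} to suitable fully faithful inclusions provided by the recollement. The key observation is that, by the definition of a recollement, the functors $i_*$, $j_!$ and $j_*$ are all fully faithful, and each of them is one end of an adjoint pair with $i^*$, $i^!$, or $j^*$. Thus, up to the equivalences $i^*i_*\cong \mathrm{id}_\Ycal$, $j^*j_!\cong \mathrm{id}_\Xcal$ and $j^*j_*\cong \mathrm{id}_\Xcal$, we may view $\Ycal$ and $\Xcal$ as full subcategories of $\Acal$ admitting both a left and a right adjoint to the inclusion.

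For $\Ycal$ the argument is symmetric: the inclusion $i_*\colon \Ycal\longrightarrow \Acal$ has left adjoint $i^*$ and right adjoint $i^!$. Thus if $\Acal$ is AB3, Lemma \ref{adj AB3}(1) applied to $i_*$ shows that $\Ycal$ admits set-indexed coproducts, hence $\Ycal$ is AB3; dually, if $\Acal$ is AB3*, Lemma \ref{adj AB3}(2) shows $\Ycal$ is AB3*.

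For $\Xcal$ one must take care to choose the correct fully faithful embedding, depending on which variant of the axiom one wants to transfer. To obtain AB3 for $\Xcal$, we use the embedding $j_*\colon \Xcal\longrightarrow \Acal$, which has left adjoint $j^*$, and apply Lemma \ref{adj AB3}(1); concretely, the coproduct in $\Xcal$ of a family $(X_\lambda)_{\lambda\in\Lambda}$ can be realised as $j^*\bigl(\coprod_{\lambda\in\Lambda} j_*X_\lambda\bigr)$. To obtain AB3* for $\Xcal$, we instead use the embedding $j_!\colon \Xcal\longrightarrow \Acal$, which has right adjoint $j^*$, and apply Lemma \ref{adj AB3}(2); the product in $\Xcal$ is then $j^*\bigl(\prod_{\lambda\in\Lambda} j_!X_\lambda\bigr)$.

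There is no genuine obstacle here; the whole content is the choice of the right embedding for $\Xcal$ (namely $j_*$ for colimits and $j_!$ for limits), after which the statement is an immediate instance of Lemma \ref{adj AB3}. In fact, the same reasoning applies verbatim to any small $\Lambda$, and one concludes more generally that $\Xcal$ and $\Ycal$ inherit all $\Lambda$-(co)limits present in $\Acal$.
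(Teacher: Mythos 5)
Your proposal is correct and follows exactly the paper's own route: apply Lemma \ref{adj AB3}(1) to the reflective embeddings $i_*$ (left adjoint $i^*$) and $j_*$ (left adjoint $j^*$) for AB3, and Lemma \ref{adj AB3}(2) to the coreflective embeddings $i_*$ (right adjoint $i^!$) and $j_!$ (right adjoint $j^*$) for AB3*. The explicit formulas $j^*\bigl(\coprod_{\lambda} j_*X_\lambda\bigr)$ and $j^*\bigl(\prod_{\lambda} j_!X_\lambda\bigr)$ are precisely the construction given in the proof of that lemma, so nothing further is needed.
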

\begin{proof}
If $\Acal$ is AB3, we apply Lemma \ref{adj AB3} to the adjoint pairs $(i^*,i_*)$ and $(j^*,j_*)$, obtaining that both $\Ycal$ and $\Xcal$ are AB3. Dually, we apply the second statement of the same Lemma to the adjoint pairs $(i_*,i^!)$ and $(j_!,j^*)$ to prove that if $\Acal$ is AB3*, then so are $\Ycal$ and $\Xcal$.
\end{proof}

At this point, we cannot prove or provide a counterexample to the converse assertions of the above Corollary. We will, however, either assume the existence of products and coproducts or consider contexts in which this existence comes essentially for free: when the abelian categories are hearts in triangulated categories with products and coproducts (see \cite[Proposition 3.2]{PaSa}).

\subsection{Exactness conditions: AB4, AB4*, AB5 and AB5*}
Note that the condition AB4 (respectively, AB4*) is a special circumstance of AB5 (respectively, AB5*), as a coproduct (respectively, product) is an instance of a direct limit (respectively, inverse limit) over a discrete category. If $\Lambda$ is a small category, we first prove that if $\Acal$ has  exact $\Lambda$-colimits (respectively $\Lambda$-limits), then so do $\Xcal$ and $\Ycal$.

\begin{proposition}\label{restriction AB5}
Let $\Rcal$ be a recollement of abelian categories as in (\ref{rec}), and let $\Lambda$ be a small category. If $\Acal$ has exact $\Lambda$-colimits (respectively, $\Lambda$-limits), then $\Ycal$ and $\Xcal$ have exact $\Lambda$-colimits (respectively, $\Lambda$-limits). 
In particular, if $\Acal$ is AB4 (respectively, AB4*, AB5, AB5*) then so are $\Ycal$ and $\Xcal$.
\end{proposition}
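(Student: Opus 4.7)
The plan is to treat the $\Lambda$-colimit case; the $\Lambda$-limit case follows by passing to the recollement of opposite categories (noting that $\Lambda$-limits in $\Acal$ are $\Lambda^{op}$-colimits in $\Acal^{op}$). First, by Lemma \ref{adj AB3} applied to the fully faithful functors $i_*$ (with left adjoint $i^*$) and $j_*$ (with left adjoint $j^*$), both $\Ycal$ and $\Xcal$ admit $\Lambda$-colimits. Using also the adjunction $(j_!,j^*)$ and the isomorphism $j^*j_!\cong\mathsf{Id}_\Xcal$, they may be computed explicitly by
$$\textsf{colim}^\Ycal_\Lambda(M)\cong i^*\bigl(\textsf{colim}^\Acal_\Lambda(i_*M)\bigr), \qquad \textsf{colim}^\Xcal_\Lambda(N)\cong j^*\bigl(\textsf{colim}^\Acal_\Lambda(j_!N)\bigr).$$

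For exactness in $\Xcal$, starting from a short exact sequence $0\to N'\to N\to N''\to 0$ in $[\Lambda,\Xcal]$, I would apply the right exact functor $j_!$ and set $K:=\Ker(j_!N'\to j_!N)$, obtaining a four-term exact sequence $0\to K\to j_!N'\to j_!N\to j_!N''\to 0$ in $[\Lambda,\Acal]$. Applying the exact functor $j^*$ and using $j^*j_!\cong\mathsf{Id}_\Xcal$ forces $j^*K=0$, so $K$ takes values in $\Ker(j^*)=\Tcal$. Now applying $\textsf{colim}^\Acal_\Lambda$ (exact by assumption) and then $j^*$ (exact and commuting with colimits, as it is a left adjoint) produces a four-term exact sequence in $\Xcal$ whose leading term is $j^*(\textsf{colim}^\Acal_\Lambda K)\cong \textsf{colim}^\Xcal_\Lambda(j^*K)=0$, yielding the desired short exact sequence of colimits.

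For exactness in $\Ycal$, given $0\to M'\to M\to M''\to 0$ in $[\Lambda,\Ycal]$, I would apply the exact fully faithful functor $i_*$ to obtain a short exact sequence in $[\Lambda,\Acal]$ taking values in $\Tcal$, and then $\textsf{colim}^\Acal_\Lambda$ (exact) to produce a short exact sequence in $\Acal$. The essential observation is that each term remains in $\Tcal$: for any diagram $D\colon\Lambda\to\Tcal$ with $\Acal$-colimit $C$, the canonical projection $C\to f(C)$ lies in $\Hom_\Acal(C,f(C))\cong \lim_\Lambda\Hom_\Acal(D_\lambda,f(C))=0$, where the vanishing uses $D_\lambda\in\Tcal$, $f(C)\in\Fcal$ and $\Hom_\Acal(\Tcal,\Fcal)=0$; hence $f(C)=0$ and $C\in\Tcal$. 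Finally, $i^*$ restricts to an exact equivalence $\Tcal\xrightarrow{\sim}\Ycal$ inverse to $i_*$, and applying it to the resulting short exact sequence in $\Tcal$ yields the required one in $\Ycal$.

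The main technical obstacle is that neither $j_!$ nor $i^*$ is exact in general; on the $\Xcal$ side, this is circumvented by tracking the kernel $K$ and exploiting the exactness of $j^*$ together with its commutation with colimits, while on the $\Ycal$ side it is handled by restricting attention to the torsion class $\Tcal$, where $i^*$ becomes an equivalence.
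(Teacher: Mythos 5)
Your proof is correct, and the overall strategy (transport the diagram into $\Acal$ via a fully faithful functor, exploit exactness of $\textsf{colim}_\Lambda$ there, and descend) is the same as the paper's, but the mechanisms differ in both halves. The paper first reduces to showing that a $\Lambda$-colimit of monomorphisms is a monomorphism (right exactness of colimits being automatic); for $\Xcal$ it then embeds via the \emph{left exact} right adjoint $j_*$, which carries monomorphisms to monomorphisms, and descends via the exact, colimit-preserving $j^*$, while for $\Ycal$ it embeds via the exact $i_*$ and descends via the left exact $i^!$ using $i^!i_*\cong\mathsf{Id}$. You instead keep the whole short exact sequence: on the $\Xcal$ side you use the right exact $j_!$ and repair its failure of left exactness by observing that the kernel $K$ satisfies $j^*K=0$, so it dies after applying the exact, colimit-preserving $j^*$; on the $\Ycal$ side you replace the use of $i^!$ by the observation that $\Tcal=\Ker(j^*)$ is a Serre subcategory closed under $\Lambda$-colimits (your $\Hom(C,f(C))=0$ argument), so that the colimit sequence stays in $\Tcal$ and can be carried back by the exact equivalence $i^*|_{\Tcal}$. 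Both arguments are sound; the paper's is more economical because the monomorphism reduction lets the left exact right adjoints do all the work, whereas yours costs the two extra observations about $K$ and about closure of $\Tcal$ under colimits, but in exchange it yields the explicit colimit formulas $\textsf{colim}^\Ycal_\Lambda\cong i^*\circ\textsf{colim}^\Acal_\Lambda\circ i_*$ and $\textsf{colim}^\Xcal_\Lambda\cong j^*\circ\textsf{colim}^\Acal_\Lambda\circ j_!$ and exhibits the torsion-theoretic structure that the paper only exploits later (e.g.\ in Theorem \ref{AB5}).
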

\begin{proof}
We prove the statement regarding $\Lambda$-colimits; the statement regarding $\Lambda$-limits can be shown dually. From Lemma \ref{adj AB3} $\Lambda$-colimits exist in $\Ycal$ and $\Xcal$. Since colimits are right exact, we only need to check that the $\Lambda$-colimit of a $\Lambda$-diagram of monomorphisms $g_\lambda: Z_\lambda \longrightarrow Z^{\prime}_\lambda$ in $\Xcal$ or in $\Ycal$ is still a monomorphism. Suppose first that $Z_\lambda$ lies in $\Xcal$, for all $\lambda$ in $\Lambda$. Then, we have that $j_*(g_\lambda)\colon j_*Z_\lambda\longrightarrow j_*Z^{\prime}_\lambda $ is a $\Lambda$-diagram of monomorphisms in $\Acal$, and since $\Acal$ has exact $\Lambda$-colimits, we get a monomorphism $\textsf{colim}_\Lambda(j_*(g_\lambda))\colon\textsf{colim}_{\Lambda}j_*Z_\lambda \longrightarrow \textsf{colim}_{\Lambda}j_*Z^{\prime}_\lambda$ in $\Acal$. Furthermore, since $j^{*}j_*$ is naturally equivalent to the identity functor and $j^{*}$ is an exact functor commuting with $\Lambda$-colimits, it follows that $j^{*}(\textsf{colim}_\Lambda(j_*(g_\lambda)))=\textsf{colim}_{\Lambda}(g_\lambda)$ and $\textsf{colim}_\Lambda(g_\lambda)$ is a monomorphism in $\Xcal$. 
Similarly, if  $Z_\lambda$ lies in $\Ycal$ for all $\lambda$ in $\Lambda$, since $i_{*}$ preserves $\Lambda$-colimits, we have that $\textsf{colim}_\Lambda i_*(g_\lambda)=i_*(\textsf{colim}_\Lambda g_\lambda)$ is a monomorphism. Using the fact that $i^!$ is left exact and that the composition $i^!i_*$  is naturally equivalent to the identity functor, we get that $\textsf{colim}_\Lambda g_\lambda = i^!(i_*(\textsf{colim}_\Lambda g_\lambda))$ is a monomorphism, as wanted.

Finally, observe that the condition AB4 says that $\Lambda$-colimits are exact for every discrete category $\Lambda$, while AB5 says that such $\Lambda$-colimits are exact for all directed categories $\Lambda$. Clearly if $\Acal$ satisfies either of those conditions, then so do $\Xcal$ and $\Ycal$.
\end{proof}

We will see in Subsection \ref{example} that the converse of the above statement for the AB5 condition does not hold in general. In the following we provide necessary and sufficient conditions on the recollement so that the converse implication holds. For this purpose, we consider the following definition.

\begin{definition}
Let $(\Vcal,\Wcal)$ be a torsion pair in an abelian category $\Acal$, $v$ its associated torsion radical, $w$ its torsion coradical and $\eta:v\longrightarrow 1_\Acal$ and $\mu: 1_\Acal\longrightarrow w$ the corresponding natural transformations. For a small category $\Lambda$, we say that the torsion pair $(\Vcal,\Wcal)$ is: 
\begin{itemize}
\item \textbf{$\textsf{colim}_\Lambda$-exact} (respectively, \textbf{$\textsf{lim}_\Lambda$-exact})  if, for any $\Lambda$-diagram $(X_\lambda)_{\lambda\in \Lambda}$ in $\Acal$, the morphism $\textsf{colim}_\Lambda (\eta_{X_\lambda})$  is a monomorphism (respectively, $\textsf{lim}_\Lambda(\mu_{X_\lambda})$ is an epimorphism);

\item \textbf{directed} (respectively, \textbf{codirected}) if it is $\mathsf{colim}_{\Lambda}$-exact (respectively, $\mathsf{lim}_{\Lambda^{op}}$-exact) for all directed categories $\Lambda$.
\end{itemize}
If $\Rcal$ is a recollement of abelian categories as in (\ref{rec}), then we say that $\Rcal$ is a \textbf{$\textsf{colim}_\Lambda$-exact} (respectively, \textbf{directed}, \textbf{$\textsf{lim}_\Lambda$-exact} or \textbf{codirected}) if both torsion pairs $(\Ccal,\Tcal)$ and $(\Tcal,\Fcal)$ associated to $\Rcal$ are $\textsf{colim}_\Lambda$-exact (respectively, directed, $\textsf{lim}_\Lambda$-exact or codirected)
\end{definition}

Note that in an AB5 abelian category, any torsion pair is directed. The following lemma clarifies the above definition in many contexts of interest to us.

\begin{lemma}\label{directed}
Let $\Lambda$ be a small category and let $(\Vcal,\Wcal)$ be a torsion pair in an abelian category $\Acal$, with torsion radical $v$ and torsion coradical $w$. If $\Acal$ admits $\Lambda$-colimits, then the torsion radical $v$ commutes with $\Lambda$-colimits if and only if $(\Vcal,\Wcal)$ is $\textsf{colim}_\Lambda$-exact and $\Wcal$ is closed for $\Lambda$-colimits. Dually, if $\Acal$ admits $\Lambda$-limits, then the torsion coradical $w$ commutes with $\Lambda$-limits if and only if $(\Vcal,\Wcal)$ is $\textsf{lim}_\Lambda$-exact and $\Vcal$ is closed for $\Lambda$-limits.
\end{lemma}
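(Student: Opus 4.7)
My plan is to prove the first assertion (about $\Lambda$-colimits) directly and deduce the second (about $\Lambda$-limits) by passing to the opposite category. The starting point is the naturality, in $X$, of the torsion decomposition short exact sequence
$$0 \longrightarrow v(X) \xrightarrow{\eta_X} X \xrightarrow{\mu_X} w(X) \longrightarrow 0.$$
Given a $\Lambda$-diagram $(X_\lambda)_{\lambda\in \Lambda}$ in $\Acal$, applying the right exact functor $\textsf{colim}_\Lambda$ produces a right exact sequence
$$\textsf{colim}_\Lambda v(X_\lambda) \xrightarrow{\textsf{colim}_\Lambda \eta_{X_\lambda}} \textsf{colim}_\Lambda X_\lambda \longrightarrow \textsf{colim}_\Lambda w(X_\lambda) \longrightarrow 0$$
whose leftmost arrow factors, by the universal property of the torsion decomposition of $\textsf{colim}_\Lambda X_\lambda$, through a canonical comparison morphism $\textsf{colim}_\Lambda v(X_\lambda)\longrightarrow v(\textsf{colim}_\Lambda X_\lambda)$. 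The entire proof will revolve around when this comparison morphism is an isomorphism.

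For the forward direction, assuming that $v$ commutes with $\Lambda$-colimits, I would verify the two conclusions separately. The $\textsf{colim}_\Lambda$-exactness of $(\Vcal,\Wcal)$ is immediate: the map $\textsf{colim}_\Lambda \eta_{X_\lambda}$ coincides, via the hypothesised isomorphism, with the monomorphism $\eta_{\textsf{colim}_\Lambda X_\lambda}$. Closure of $\Wcal$ under $\Lambda$-colimits follows because $v$ vanishes on any $\Lambda$-diagram in $\Wcal$, so $v(\textsf{colim}_\Lambda W_\lambda)\cong \textsf{colim}_\Lambda v(W_\lambda)=0$ and $\textsf{colim}_\Lambda W_\lambda$ lies in $\Wcal$.

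For the converse I would assume both hypotheses and show that the comparison morphism is an isomorphism. By $\textsf{colim}_\Lambda$-exactness the map $\textsf{colim}_\Lambda \eta_{X_\lambda}$ is a monomorphism, so the displayed right exact sequence is in fact short exact. The right-hand term lies in $\Wcal$ by the closure hypothesis, and the left-hand term lies in $\Vcal$ because $\Vcal={}^{\perp}\Wcal$ is automatically closed under any $\Lambda$-colimit existing in $\Acal$ (since $\Hom_\Acal(-,W)$ sends such colimits to limits of zero groups for every $W$ in $\Wcal$). By uniqueness of the torsion decomposition of $\textsf{colim}_\Lambda X_\lambda$, the comparison map $\textsf{colim}_\Lambda v(X_\lambda)\longrightarrow v(\textsf{colim}_\Lambda X_\lambda)$ must then be an isomorphism, as wanted.

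The argument is almost entirely formal, with the only substantive input being the identification $\Vcal={}^{\perp}\Wcal$; this is what I would regard as the conceptual pivot, since it is what upgrades the right exact sequence obtained by applying $\textsf{colim}_\Lambda$ into a genuine torsion decomposition of $\textsf{colim}_\Lambda X_\lambda$. Once the first statement is established, the statement about $\Lambda$-limits follows by applying it in the opposite category $\Acal^{op}$, in which $(\Wcal,\Vcal)$ becomes a torsion pair with torsion radical $w$ and $\Lambda$-limits in $\Acal$ are $\Lambda^{op}$-colimits in $\Acal^{op}$.
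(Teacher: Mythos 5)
Your proof is correct and follows essentially the same route as the paper: both directions proceed by applying $\textsf{colim}_\Lambda$ to the natural torsion decomposition sequences, using the comparison morphism into the torsion decomposition of the colimit for the forward implication, and using the uniqueness of the torsion decomposition (together with the automatic closure of the torsion class $\Vcal={}^{\perp}\Wcal$ under colimits) for the converse. The paper likewise disposes of the limit statement by duality.
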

\begin{proof}
We prove the first statement; the second follows dually. Let $v$ and $w$ denote the torsion radical and coradical, respectively, associated to the torsion pair $(\Vcal,\Wcal)$, and let $\eta: v\longrightarrow 1_\Acal$ be the (monomorphic) natural transformation between the torsion radical and the identity functor. Let $(X_\lambda)_{\lambda\in\Lambda}$ be a $\Lambda$-diagram.

If the torsion radical $v$ commutes with $\Lambda$-colimits, then the following diagram commutes.
$$\xymatrix{  & \textsf{colim}_\Lambda(v(X_\lambda)) \ar[d]^{\cong} \ar[rr]^{\textsf{colim}_\Lambda(\eta_{X_\lambda})}& & \textsf{colim}_\Lambda(X_\lambda) \ar[r] \ar@{=}[d] & \textsf{colim}_\Lambda(w(X_\lambda)) \ar[r] \ar[d] & 0  \\ 0 \ar[r] & v(\textsf{colim}_\Lambda(X_\lambda)) \ar[rr] & & \textsf{colim}_\Lambda(X_\lambda) \ar[r] & w(\textsf{colim}_\Lambda(X_\lambda)) \ar[r] & 0}$$
This shows that $\textsf{colim}_\Lambda(\eta_{X_\lambda})$ is a monomorphism and, thus, that $(\Vcal,\Wcal)$ is $\textsf{colim}_{\Lambda}$-exact torsion pair and that $\textsf{colim}_\Lambda w(X_\lambda)\cong w(\textsf{colim}_\Lambda(X_\lambda))$ lies in $\Wcal$.

Conversely, consider the following $\Lambda$-diagram of short exact sequences in $\Acal$.
$$\xymatrix{0\ar[r]& v(X_\lambda)\ar[r]^{\eta_{X_\lambda}}&X_\lambda\ar[r] & w(X_\lambda)\ar[r]&0}$$
If $(\Vcal,\Wcal)$ is $\textsf{colim}_\Lambda$-exact and $\Wcal$ is closed for $\Lambda$-colimits, then the $\Lambda$-colimit of the diagram of short exact sequences is short exact, with the first term lying in $\Vcal$ (since torsion classes are closed for $\Lambda$-colimits) and the last term lying in $\Wcal$. This finishes our proof.
\end{proof}

Recall that, given a recollement as in (\ref{rec}), the natural transformation $\omega\colon i_*i^!\longrightarrow 1_\Acal$ coincides with the one associated to the torsion radical of $(\Tcal,\Fcal)$. Let us also denote by $h\colon c\longrightarrow 1_\Acal$ the natural transformation associated to the torsion radical of the pair $(\Ccal,\Tcal)$.

\begin{example}\label{example directed} Let $\Rcal$ be a recollement as in (\ref{rec}) and $\Lambda$ a small category such that $\Acal$ admits $\Lambda$-colimits and $\Ycal$ has exact $\Lambda$-colimits.
\begin{enumerate}
\item If $i^!$ commutes with $\Lambda$-colimits, then $\Rcal$ is a $\mathsf{colim}_\Lambda$-exact recollement. Indeed, since the torsion radical of $(\Tcal,\Fcal)$ is $i_*i^!$ and $\Fcal=\Ker(i^!)$ (hence closed under $\Lambda$-colimits), Lemma \ref{directed} shows that $(\Tcal,\Fcal)$ is $\mathsf{colim}_\Lambda$-exact. Given a $\Lambda$-diagram $(M_\lambda)_{\lambda\in\Lambda}$ in $\Acal$, consider the commutative $\Lambda$-diagram of exact sequences.
$$\xymatrix{0\ar[r]&i_*i^!c(M_\lambda)\ar[r]\ar[d]^{i_*i^!(h_\lambda)}& c(M_\lambda)\ar[r]\ar[d]^{h_\lambda}&f(c(M_\lambda))\ar[r]\ar[d]^{f(h_\lambda)}&0\\ 0\ar[r]& i_*i^!M_\lambda \ar[r]&M_\lambda\ar[r]& f(M_\lambda)\ar[r]&0}$$
Since $(\Tcal,\Fcal)$ is $\mathsf{colim}_\Lambda$-exact, both rows remain exact when applying $\mathsf{colim}_\Lambda$ and, since $\Ycal$ has exact $\Lambda$-colimits, $\mathsf{colim}_\Lambda(i_*i^!(h_\lambda))$ is a monomorphism. By the snake lemma, there is a monomorphism from $\Ker(\mathsf{colim}_\Lambda(h_\lambda))$ to $\Ker(\mathsf{colim}_\Lambda(f(h_\lambda)))$. Since $\Fcal$ is closed under $\Lambda$-colimits and subobjects we conclude that $\Ker(\mathsf{colim}_\Lambda(h_\lambda))$ lies in $\Fcal$. On the other hand, since $j^*c(M_\lambda)\cong M_\lambda$, applying $j^*$ to $\mathsf{colim}_\Lambda(h_\lambda)$ allows us to observe that $\Ker(\mathsf{colim}_\Lambda(h_\lambda))$ also lies in $\Tcal$ and, thus, it must be zero, proving our claim.

\item If $i^*$ is exact, then $\Rcal$ is a $\mathsf{colim}_\Lambda$-exact recollement. Recall from Lemma \ref{exact TTF} that $i^*$ is exact if and only if $\Ccal\subseteq \Fcal$. Given a $\Lambda$-diagram $(M_\lambda)_{\lambda\in\Lambda}$ in $\Acal$, it follows (as in (1) above) that the kernel of the $\Lambda$-colimit of the torsion radical maps $c(M_\lambda)\longrightarrow M_\lambda$ lies in in $\Tcal$. Since $\Ccal$ is closed under $\Lambda$-colimits (it is a torsion class) and $\Ccal\subseteq \Fcal$, we conclude that that kernel is zero, and $(\Ccal,\Tcal)$ is $\mathsf{colim}_\Lambda$-exact. On the other hand, consider the commutative diagram of canonical maps.
$$\xymatrix{0\ar[r]&0\ar[r]\ar[d]& i_*i^!M_\lambda\ar@{=}[r]\ar[d]^{\omega_\lambda}&i_*i^!M_\lambda\ar[r]\ar[d]^{\gamma_\lambda \omega_\lambda}&0\\ 0\ar[r]& c(M_\lambda) \ar[r]^{h_\lambda}&M_\lambda\ar[r]^{\gamma_\lambda}& i_*i^*M_\lambda\ar[r]&0}$$
Note that the kernel of $\gamma_\lambda \omega_\lambda$ lies in $\Tcal$ and is a subobject ($\omega_\lambda$ is a monomorphism) of $c(M_\lambda)$, thus lying also in $\Fcal$. In other words, $\gamma_\lambda \omega_\lambda$ is a monomorphism. When applying to the diagram $\mathsf{colim}_\Lambda$, the rows remain exact (since $(\Ccal,\Tcal)$ is $\mathsf{colim}_\Lambda$-exact) and $\mathsf{colim}_\Lambda(\gamma_\lambda \omega_\lambda)$ remains a monomorphism since $\Ycal$ has exact $\Lambda$-colimits. Hence $\mathsf{colim}(\omega_\lambda)$ is a monomorphism and $(\Tcal,\Fcal)$ is $\mathsf{colim}_\Lambda$-exact.
\end{enumerate}
\end{example}

The following theorem states that in a recollement as before, the exactness of $\Lambda$-colimits (respectively $\Lambda$-limits) in $\Acal$  depend on the exactness on $\Xcal$ and $\Ycal$ and on the $\textsf{colim}_{\Lambda}$-exactness (respectively $\textsf{lim}_{\Lambda}$-exactness) of the recollement.

\begin{theorem}\label{AB5}
Let $\Rcal$ be a recollement of abelian categories as in (\ref{rec}). 
\begin{enumerate}
\item Suppose $\Acal$ is AB3. Then $\Lambda$-colimits in $\Acal$ are exact if and only if they are exact in $\Xcal$ and $\Ycal$ and $\Rcal$ is $\textsf{colim}_{\Lambda}$-exact. In particular, $\Acal$ is AB5 if and only if $\Xcal$ and $\Ycal$ are AB5 and $\Rcal$ is directed.
\item Suppose $\Acal$ is AB3*. Then $\Lambda$-limits in $\Acal$ are exact if and only if they are exact in $\Xcal$ and $\Ycal$ and $\Rcal$ is \textsf{lim}$_{\Lambda}$-exact.  In particular, $\Acal$ is AB5* if and only if $\Xcal$ and $\Ycal$ are AB5* and $\Rcal$ is codirected.
\end{enumerate}
\end{theorem}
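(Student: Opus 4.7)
The forward direction is essentially already done by Proposition~\ref{restriction AB5}: if $\Acal$ has exact $\Lambda$-colimits then so do $\Xcal$ and $\Ycal$. For $\mathsf{colim}_\Lambda$-exactness of $\Rcal$, note that for any torsion pair $(\Vcal,\Wcal)$ in $\Acal$ the natural transformation $\eta:v\to 1_\Acal$ is objectwise a monomorphism; exactness of $\mathsf{colim}_\Lambda$ in $\Acal$ then preserves this, so both $(\Ccal,\Tcal)$ and $(\Tcal,\Fcal)$ are $\mathsf{colim}_\Lambda$-exact. Part (2) is dual throughout, swapping $\mathsf{colim}$/\textsf{lim}, kernels/cokernels, and the roles of $i^!$ and $i^*$, so I focus on part (1) backward.

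For the backward direction, let $(g_\lambda: M_\lambda\hookrightarrow N_\lambda)_{\lambda\in\Lambda}$ be a $\Lambda$-diagram of monomorphisms in $\Acal$, and set $g:=\mathsf{colim}_\Lambda g_\lambda$; the goal is $\Ker g=0$. The proof proceeds by peeling off $\Acal$ in the two layers given by the TTF triple $(\Ccal,\Tcal,\Fcal)$. First, since $j^*$ is exact and preserves colimits and $\Xcal$ has exact $\Lambda$-colimits, $j^*g=\mathsf{colim}_\Lambda j^*g_\lambda$ is mono, forcing $\Ker g\in\Tcal$.

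Second, apply the functorial $(\Tcal,\Fcal)$-torsion sequence $0\to i_*i^!X\to X\to f(X)\to 0$ to the whole diagram, and then apply $\mathsf{colim}_\Lambda$. The assumption that $(\Tcal,\Fcal)$ is $\mathsf{colim}_\Lambda$-exact ensures the rows remain short exact. Because $i^!$ is left exact and $i_*$ is exact, and $\Ycal$ has exact $\Lambda$-colimits, the left-hand column $i_*\mathsf{colim}_\Lambda i^!g_\lambda$ is a monomorphism. A snake lemma chase then shows $\Ker g\hookrightarrow \Ker(\mathsf{colim}_\Lambda f(g_\lambda))$, so it suffices to prove that $\beta:=\mathsf{colim}_\Lambda f(g_\lambda)$ is a monomorphism in $\Acal$. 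Each $f(g_\lambda)$ is a monomorphism between objects of $\Fcal$ (monicity following from $\Tcal\cap\Fcal=0$ via snake).

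Third, apply the functorial $(\Ccal,\Tcal)$-torsion sequence $0\to c(X)\to X\to i_*i^*X\to 0$ to the diagram $(f(g_\lambda))$, and use that $(\Ccal,\Tcal)$ is $\mathsf{colim}_\Lambda$-exact to keep rows short exact after $\mathsf{colim}_\Lambda$. A second snake lemma reduces the monicity of $\beta$ to the monicity of the two outer vertical arrows: $\mathsf{colim}_\Lambda c(f(g_\lambda))$, whose components lie in $\Ccal\cap\Fcal$, and $\mathsf{colim}_\Lambda i_*i^*f(g_\lambda)=i_*\mathsf{colim}_\Lambda i^*f(g_\lambda)$. For the latter, a direct snake lemma on the $(\Ccal,\Tcal)$-sequence of each $f(g_\lambda)$ gives $\Ker(i_*i^*f(g_\lambda))\hookrightarrow c(f(N_\lambda))/c(f(M_\lambda))$; this kernel is in $\Tcal$ while $c(f(N_\lambda)),c(f(M_\lambda))\in\Ccal\cap\Fcal$, and the quotient lies in $\Ccal$, so $\Tcal\cap\Ccal=0$ forces the kernel to vanish. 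Then $\Ycal$'s exact colimits give the needed monomorphism. For the former, $c(f(g_\lambda))$ lies in $\Ccal\cap\Fcal\simeq\Xcal$ via $j^*$, which preserves colimits, so $j^*\mathsf{colim}_\Lambda c(f(g_\lambda))=\mathsf{colim}^{\Xcal} j^*g_\lambda$ is mono; combined with the fact that its kernel lies both in $\Tcal$ and embeds into a $\Ccal$-object, the same vanishing forces monicity.

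The main obstacle lies in the third step, specifically in controlling how the $\Tcal$-component interacts with the $\Ccal\cap\Fcal$-component of $\mathsf{colim}_\Lambda f(M_\lambda)$: this is precisely where both $\mathsf{colim}_\Lambda$-exactness hypotheses are needed simultaneously, because neither alone guarantees that the relevant kernels vanish. The key technical input is that the torsion radicals $c$ and $i_*i^!$ give rise to monomorphisms at every level of $\mathsf{colim}_\Lambda$, which combined with the naturality squares forces each vanishing via $\Ccal\cap\Tcal=0=\Tcal\cap\Fcal$.
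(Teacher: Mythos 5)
Your overall strategy --- reducing along the TTF triple in two snake-lemma steps, first via $(\Tcal,\Fcal)$ and then via $(\Ccal,\Tcal)$ --- is close in spirit to the paper's, and your Steps 1 and 2 are fine, but the third step contains a genuine gap. You claim that $\Ker(i_*i^*f(g_\lambda))$ vanishes because it lies in $\Tcal$ and embeds into $\Coker(c(f(g_\lambda)))$, which lies in $\Ccal$, ``so $\Tcal\cap\Ccal=0$ forces the kernel to vanish''; you invoke the same principle again for the kernel of $\mathsf{colim}_\Lambda c(f(g_\lambda))$. This inference is not valid: $\Ccal=\Ker(i^*)$ is a torsion class but not in general a torsionfree class, so it is not closed under subobjects, and a nonzero object of $\Tcal$ can perfectly well embed into an object of $\Ccal$. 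For a concrete instance, take $A$ the path algebra of the quiver $1\to 2$ and the recollement of $\mathsf{Mod}\text{-}A$ induced by the idempotent $e_1$: then $\Tcal$ consists of direct sums of copies of the simple $S_2$, $\Ccal=\mathsf{Gen}(e_1A)$, and $S_2\cong\mathrm{soc}(e_1A)$ is a nonzero object of $\Tcal$ embedded in an object of $\Ccal$. Relatedly, the level-$\lambda$ statement you are aiming for --- that $i^*$ sends monomorphisms between objects of $\Fcal$ to monomorphisms --- is not available in general, since $i^*$ is only right exact.

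The paper's proof is organised precisely to avoid this point. It never asserts that $i_*i^*(g_\lambda)$ is a monomorphism; instead it first establishes two preliminary claims --- that $\mathsf{colim}_\Lambda$ of a diagram of monomorphisms is again a monomorphism when all sources lie in $\Tcal$ (using exactness of $\Lambda$-colimits in $\Ycal$ together with $\mathsf{colim}_\Lambda$-exactness of $(\Tcal,\Fcal)$), and when all objects lie in $\Ccal$ --- and then, in the general case, records the snake-lemma monomorphisms $d_\lambda\colon\Ker(i_*i^*(g_\lambda))\hookrightarrow\Coker(c(g_\lambda))$, whose sources lie in $\Tcal$, applies the first claim to the diagram $(d_\lambda)$ to see that $\mathsf{colim}_\Lambda(d_\lambda)$ is still a monomorphism, and concludes with a snake-lemma chase at the colimit level. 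To repair your argument you would need to replace both vanishing claims in your third step by an argument of this kind (or reorganise the reduction so that the problematic kernels never arise).
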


\begin{proof}
Once again, we prove one of the assertions; the other one is dual. Let $\Lambda$ be a small category. If $\Acal$ has exact $\Lambda$-colimits, it follows from Proposition \ref{restriction AB5} that both $\Xcal$ and $\Ycal$ also satisfy this property. It is also clear that any torsion pair in such abelian category is $\textsf{colim}_\Lambda$-exact. 

Conversely, suppose that $\Lambda$-colimits in $\Xcal$ and $\Ycal$ are exact and that both torsion pairs in the TTF triple $(\Ccal,\Tcal,\Fcal)$ are $\textsf{colim}_\Lambda$-exact. Let $(g_\lambda:M_\lambda\longrightarrow N_\lambda)_{\lambda\in\Lambda}$ be a $\Lambda$-diagram of monomorphisms in $\Acal$. First we prove the result in two special instances.

\textbf{Claim 1:} $\textsf{colim}_{\Lambda}(g_\lambda)$ is a monomorphism whenever $M_\lambda$ lies in $\Tcal$ for all $\lambda$ in $\Lambda$. Indeed, if $(M_{\lambda})_{\lambda\in\Lambda}$ is a $\Lambda$-diagram in $\Tcal$. Then $g_\lambda$ factors through $\omega_{N_\lambda}:i_*i^!N_\lambda \longrightarrow N_\lambda$, i.e., there is $\bar{g}_\lambda: M_\lambda\longrightarrow i_*i^!N_\lambda$ such that $g_\lambda=\omega_{N_\lambda}\bar{g}_\lambda$, for all $\lambda\in\Lambda$. Note that all these maps are monomorphisms. Now, since $\Lambda$-colimits are exact in $\Ycal$ and $\bar{g}_\lambda$ is a morphism in $\Tcal=i_*\Ycal$ for all $\lambda\in\Lambda$, it follows that $\textsf{colim}_{\Lambda}(\bar{g}_\lambda)$ is a monomorphism. Moreover, since $(\Tcal,\Fcal)$ is a $\textsf{colim}_{\Lambda}$-exact torsion pair, then $\textsf{colim}_{\Lambda} (\omega_{N_\lambda})$ is also a monomorphism. Hence, $\textsf{colim}_{\Lambda}(g_\lambda) = \textsf{colim}_{\Lambda}(\omega_{N_\lambda}) \circ \textsf{colim}_{\Lambda}(\bar{g}_\lambda)$ is a monomorphism.

\textbf{Claim 2:} $\textsf{colim}_{\Lambda}(g_\lambda)$ is a monomorphism whenever both $M_\lambda$ and $N_\lambda$ lie in $\Ccal$ for all $\lambda$ in $\Lambda$. Indeed, if $(g_\lambda)_{\lambda\in\Lambda}$ is a $\Lambda$-diagram of monomorphisms in $\Ccal$ for each $\lambda\in\Lambda$, there is a commutative $\Lambda$-diagram
$$\xymatrix{ 0 \ar[r] & \Ker(\nu_{M_{\lambda}}) \ar[r] \ar[d]^{k_\lambda} & j_{!}j^{*}M_\lambda \ar[r]^{\nu_{M_\lambda}} \ar[d]_{j_!j^*(g_\lambda)}& M_\lambda \ar[r] \ar[d]^{g_\lambda} & 0 \\0 \ar[r] & \Ker(\nu_{N_\lambda}) \ar[r]  & j_{!}j^{*}N_\lambda \ar[r]^{\nu_{N_\lambda}} & N_\lambda \ar[r]  & 0 }$$
where both $\Ker(\nu_{M_{\lambda}})$ and $\Ker(\nu_{N_{\lambda}})$ can be seen to lie in $\Tcal$ (just apply the exact functor $j^*$ to the exact sequences above). Since $g_\lambda$ is a monomorphism, we have that $\Ker(k_\lambda)\cong \Ker(j_!j^*(g_\lambda))$ and they both lie in $\Tcal$ since they are (isomorphic to) a subobject of $\Ker(\nu_{M_{\lambda}})$. Now consider the $\Lambda$-colimit of the diagram. By Claim 1, the rows of the diagram so obtained remain exact and, again by Claim 1, we have 
$$\Ker(\textsf{colim}_\Lambda(k_\lambda))=\textsf{colim}_{\Lambda}(\Ker(k_\lambda))\cong \textsf{colim}_{\Lambda}( \Ker(j_!j^*(g_\lambda)))=\Ker(\textsf{colim}_{\Lambda}(j_!j^*(g_\lambda)))$$
and, therefore, by the snake lemma, there is a monomorphism $\Ker(\textsf{colim}_{\Lambda}(g_\lambda))\longrightarrow \Coker(\textsf{colim}_{\Lambda}(k_\lambda))=\textsf{colim}_{\Lambda}(\Coker(k_\lambda))$. Since $\Coker(k_\lambda)$ lies in $\Tcal$, and the natural map $d_\lambda: \Coker(k_\lambda)\longrightarrow \Coker(j_!j^*(g_\lambda))$ is a monomorphism, again by Claim 1 we have that $\textsf{colim}_{\Lambda}(d_\lambda)$ is a monomorphism. Therefore, the snake lemma tells us that $\Ker(\textsf{colim}_{\Lambda}(g_\lambda))=0$, proving Claim 2. 

We are now ready to show the general case. Let $(g_\lambda)_{\lambda\in\Lambda}$ be a $\Lambda$-diagram of monomorphisms as before and consider the following commutative diagrams:
$$\xymatrix{ 0 \ar[r] & c(M_{\lambda}) \ar[r] \ar[d]^{c(g_\lambda)} & M_\lambda \ar[r] \ar[d]^{g_\lambda}& i_*i^*(M_\lambda) \ar[r] \ar[d]^{i_*i^*(g_\lambda)}& 0 \\0 \ar[r] & c(N_{\lambda}) \ar[r] & N_\lambda \ar[r] & i_*i^*(N_\lambda) \ar[r] & 0 } $$
for $\lambda$ in $\Lambda$, induced by the torsion decompositions of $M_\lambda$ and $N_{\lambda}$. Since $g_\lambda$ is a monomorphism, then so is $c(g_\lambda)$. Note also that there is a $\Lambda$-diagram of monomorphisms $d_\lambda:\Ker(i_*i^*(g_\lambda))\longrightarrow \Coker(c(g_\lambda))$ arising from the snake lemma, with $\Ker(i_*i^*(g_\lambda))$ lying in $\Tcal$. Applying the $\Lambda$-colimit functor to the diagram, the rows will remain exact due to the fact that $(\Ccal,\Tcal)$ is $\textsf{colim}_{\Lambda}$-exact, $\textsf{colim}_{\Lambda}(c(g_\lambda))$ is a monomorphism by Claim 2 and $\textsf{colim}_{\Lambda}(d_\lambda)$ is a monomorphism by Claim 1. Hence, it follows from the snake lemma that $\textsf{colim}_{\Lambda}(g_\lambda)$ is a monomorphism, as wanted.
\end{proof}

\begin{remark}
Note that since $\Tcal$ is closed for direct limits, the requirement that $(\Ccal,\Tcal)$ is directed is equivalent, by Lemma \ref{directed}, to the requirement that the torsion radical $c$ commutes with direct limits.
\end{remark}

\begin{corollary}\label{corollary exact AB5}
Let $\Rcal$ be a recollement of abelian categories as in (\ref{rec}) with $\Acal$ an AB3 abelian category. If $i^!$ commutes with $\Lambda$-colimits or if $i^*$ is exact, then $\Xcal$ and $\Ycal$ have exact $\Lambda$-colimits if and only if so does $\Acal$. In particular, under either of these two assumptions
\begin{enumerate}
\item $\Xcal$ and $\Ycal$ are AB4 if and only if $\Acal$ is AB4;
\item $\Xcal$ and $\Ycal$ are AB5 if and only if $\Acal$ is AB5.
\end{enumerate}
\end{corollary}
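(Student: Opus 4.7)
The plan is to exhibit this corollary as a direct combination of Theorem \ref{AB5} with Example \ref{example directed}, so the main work has already been done and what remains is essentially bookkeeping.

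First, the forward direction is immediate from Proposition \ref{restriction AB5}: if $\Acal$ has exact $\Lambda$-colimits, then so do $\Xcal$ and $\Ycal$, with no additional hypothesis on $i^!$ or $i^*$ needed. So I only need to address the converse.

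For the converse, assume $\Xcal$ and $\Ycal$ have exact $\Lambda$-colimits. By Theorem \ref{AB5}(1), to conclude that $\Acal$ has exact $\Lambda$-colimits it suffices to check that the recollement $\Rcal$ is $\textsf{colim}_\Lambda$-exact, i.e. that both torsion pairs $(\Ccal,\Tcal)$ and $(\Tcal,\Fcal)$ in the associated TTF triple are $\textsf{colim}_\Lambda$-exact. But this is precisely what Example \ref{example directed} provides under each of our two alternative hypotheses: part (1) of that example shows that if $i^!$ commutes with $\Lambda$-colimits then $\Rcal$ is $\textsf{colim}_\Lambda$-exact (using that $\Ycal$ has exact $\Lambda$-colimits, which we now assume), while part (2) shows the same conclusion if $i^*$ is exact. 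In both cases the required hypothesis on $\Ycal$ is in place, so Theorem \ref{AB5}(1) applies and $\Acal$ has exact $\Lambda$-colimits.

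Finally, the two enumerated statements follow by specialization of $\Lambda$: taking $\Lambda$ to range over all discrete small categories yields the equivalence for AB4, while taking $\Lambda$ to range over all directed small categories yields the equivalence for AB5. Since the hypothesis "$\Acal$ is AB3" is included in the assumption of AB4 and AB5, there is no issue about the existence of the relevant coproducts or direct limits. No substantial obstacle arises; the whole argument is really a matter of assembling the earlier results in the right order, and the non-trivial content was already carried out in the proofs of Theorem \ref{AB5} and Example \ref{example directed}.
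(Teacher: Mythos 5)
Your proof is correct and follows exactly the paper's argument: the paper's own proof is the one-line remark that the corollary ``follows directly from Example \ref{example directed} and Theorem \ref{AB5}'', and you have simply spelled out the same assembly, correctly noting that the hypothesis that $\Ycal$ has exact $\Lambda$-colimits (needed in Example \ref{example directed}) is available in the converse direction and that AB3 for $\Acal$ supplies the existence of $\Lambda$-colimits.
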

\begin{proof}
This follows directly from Example \ref{example directed} and Theorem \ref{AB5}. 
\end{proof}

\section{Generators and Grothendieck categories}
In this section we treat the more difficult problem of building a generator for the abelian category at the centre of a recollement. We begin with an easy observation. 
\begin{lemma}\label{AllsGrothendieck}
Let $\Rcal$ be a recollement of abelian categories as in (\ref{rec}) and suppose that $\Acal$ has a generator. Then both $\Xcal$ and $\Ycal$ have a generator. Moreover, if $\Acal$ is a Grothendieck category, then so are $\Xcal$ and $\Ycal$.
\end{lemma}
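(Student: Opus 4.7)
The plan is to construct the generators for $\Xcal$ and $\Ycal$ directly from a generator $G$ of $\Acal$ using the adjunction units and counits, and then to deduce the Grothendieck statement by combining the generator construction with the earlier result on AB5 (Proposition \ref{restriction AB5}).

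First I would show that $j^*G$ generates $\Xcal$. Given a nonzero morphism $\phi\colon X\longrightarrow Y$ in $\Xcal$, the functor $j_*$ is fully faithful, hence $j_*\phi$ is a nonzero morphism in $\Acal$. Since $G$ generates $\Acal$, there exists $g\colon G\longrightarrow j_*X$ with $(j_*\phi)\circ g\neq 0$. The adjunction $(j^*,j_*)$ gives a natural bijection $\Hom_\Acal(G,j_*X)\cong \Hom_\Xcal(j^*G,X)$ which is natural in both variables; in particular, the adjoint map $\bar g\colon j^*G\longrightarrow X$ of $g$ satisfies that $\phi\circ \bar g$ is adjoint to $(j_*\phi)\circ g\neq 0$, and is therefore nonzero. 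Hence $\Hom_\Xcal(j^*G,\phi)\neq 0$, so $j^*G$ is a generator of $\Xcal$.

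Similarly, I would show that $i^*G$ generates $\Ycal$: given a nonzero $\psi\colon Y_1\longrightarrow Y_2$ in $\Ycal$, full faithfulness of $i_*$ gives that $i_*\psi$ is nonzero, so some $g\colon G\longrightarrow i_*Y_1$ satisfies $(i_*\psi)\circ g\neq 0$, and the adjunction $(i^*,i_*)$ then produces an adjoint morphism $\bar g\colon i^*G\longrightarrow Y_1$ with $\psi\circ \bar g\neq 0$, showing that $i^*G$ is a generator of $\Ycal$.

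For the second assertion, assume $\Acal$ is Grothendieck; in particular $\Acal$ is AB5 (and hence AB3). By Proposition \ref{restriction AB5} applied to any directed category $\Lambda$, both $\Xcal$ and $\Ycal$ have exact direct limits, i.e.\ they are AB5. Combined with the existence of the generators $j^*G$ and $i^*G$ constructed above, this yields that $\Xcal$ and $\Ycal$ are Grothendieck.

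There is no real obstacle here: everything reduces to the formalities of adjunction and the already-established transfer of AB5 from $\Acal$ to $\Xcal$ and $\Ycal$. The only subtlety worth double-checking is that one must use the generator criterion purely in terms of $\Hom$, rather than via epimorphisms from coproducts of copies of $G$, since one does not a priori know that $\Xcal$ or $\Ycal$ are AB3 at the moment of constructing the generator; the adjunction argument avoids this issue entirely.
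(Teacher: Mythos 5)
Your proposal is correct and follows essentially the same route as the paper: the generators $j^*G$ and $i^*G$ are obtained via the adjunctions and the full faithfulness of the right adjoints $j_*$ and $i_*$, and the Grothendieck statement is deduced from Proposition \ref{restriction AB5}. The paper merely states this more tersely, while you have spelled out the adjunction/naturality details (and correctly noted that the Hom-based generator criterion avoids any AB3 issue).
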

\begin{proof}
Given a generator $G$ in $\Acal$, $j^*G$ and $i^*G$ are generators in $\Xcal$ and $\Ycal$ respectively, since the right adjoints $j_*$ and $i_*$ are fully faithful. The final statement follows from Proposition \ref{restriction AB5}.
\end{proof}

In full generality, the question of whether the existence of generators in $\Xcal$ and $\Ycal$ implies the existence of a generator in $\Acal$ remains open for an arbitrary recollement of abelian categories (see Question \ref{question}). We will, however, cover some cases, using various naturally occurring assumptions on the recollement. Our constructions use a fair amount of categorical/homological tricks. The following lemma exemplifies the sort of arguments that we will use. Although the lemma itself will only be used later in the paper, we prove it now so that the reader can become familiar with the technique. The key standard fact used here is that any set-indexed coproduct is a direct limit of its finite subcoproducts. We will refer to the use of this fact in the construction of generators as \textit{generating by finite reduction}. Given a subcategory $\Bcal$ of an abelian category $\Acal$, we will denote by $\mathsf{add}(\Bcal)$ the class of summands of finite direct sums of objects in $\Bcal$ and by $\mathsf{Gen}(\Bcal)$ the class of quotients of arbitrary coproducts of objects in $\Bcal$. If $\Bcal$ contains a single object $X$, we write $\mathsf{add}(X)$ and $\mathsf{Gen}(X)$ for simplicity.

\begin{lemma}\label{generating by finite reduction}
Let $\Acal$ be an AB3 abelian category. Let $X$ be an object of $\Acal$ and $Y$ a subobject of $X$. If $X$ is generated by $M$, then $Y$ is generated by subobjects of finite direct sums of $M$. In other words, we have $\mathsf{Sub}(\mathsf{Gen}(M))\subseteq \mathsf{Gen}(\mathsf{Sub}(\mathsf{add}(M)))$
\end{lemma}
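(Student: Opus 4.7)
The plan is to exhibit $Y$ as an epimorphic image of a coproduct of subobjects of finite direct sums of $M$. Since $X\in \mathsf{Gen}(M)$, fix an epimorphism $\pi\colon M^{(I)}\twoheadrightarrow X$ for some set $I$. For each finite $F\subseteq I$ let $\iota_F\colon M^F\hookrightarrow M^{(I)}$ denote the canonical coproduct inclusion, and form the pullback of $Y\hookrightarrow X$ along $\pi\iota_F$:
\[
\xymatrix{Z_F \ar@{^(->}[r] \ar[d]_{\alpha_F} & M^F \ar[d]^{\pi\iota_F}\\ Y \ar@{^(->}[r] & X.}
\]
Because the bottom row is a monomorphism, so is the top; hence $Z_F\in \mathsf{Sub}(M^F)\subseteq \mathsf{Sub}(\mathsf{add}(M))$, and we obtain a canonical morphism $\alpha_F\colon Z_F\to Y$.

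Next I would form the global pullback $Z:= M^{(I)}\times_X Y$, so that the projection $Z\hookrightarrow M^{(I)}$ is a monomorphism and $q\colon Z\twoheadrightarrow Y$ is an epimorphism (pullbacks preserve epimorphisms in abelian categories). Two applications of the pullback-gluing lemma identify each $Z_F$ with $Z\times_{M^{(I)}} M^F$, that is, with the intersection $Z\cap \iota_F(M^F)$ inside $M^{(I)}$. The sum map $\alpha:=(\alpha_F)_F\colon\coprod_F Z_F\longrightarrow Y$ therefore factors as $\coprod_F Z_F\xrightarrow{\beta} Z\xrightarrow{q} Y$, and since $q$ is already epi it suffices to prove that $\beta$ is an epimorphism.

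For this last step I would invoke the \emph{finite reduction} principle stated just before the lemma: since $M^{(I)}=\varinjlim_F M^F$ as a directed colimit in $\Acal$, the canonical morphism $\coprod_F M^F\twoheadrightarrow M^{(I)}$ is an epimorphism. Restricting this surjection to the subobject $Z\hookrightarrow M^{(I)}$ and using the identification $Z_F=Z\cap \iota_F(M^F)$ yields that $Z$ equals the internal sum $\sum_F \iota_F(Z_F)$, which is precisely the image of $\beta$. Composing with $q$ then produces the desired epimorphism $\coprod_F Z_F\twoheadrightarrow Y$, establishing $Y\in \mathsf{Gen}(\mathsf{Sub}(\mathsf{add}(M)))$.

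The crux of the argument, and the main obstacle, lies in verifying that a subobject $Z$ of the directed colimit $\varinjlim_F M^F$ coincides with the internal sum $\sum_F (Z\cap \iota_F(M^F))$. This is precisely the finite reduction principle, and it is the only step which relies on the coproduct $M^{(I)}$ being presented as a directed colimit of its finite sub-coproducts rather than merely on the existence of coproducts in $\Acal$.
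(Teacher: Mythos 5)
Your construction follows the paper's argument almost step for step: the paper forms the same pullbacks (its $N$ is your $Z$, its $N_F$ are your $Z_F$), observes that $Y$ is a quotient of $N$, and reduces to showing that $N$ is covered by the $N_F$. The difficulty is that the final step of your proof --- the one you yourself single out as the crux --- is asserted rather than proved. From the fact that $\coprod_F M^{(F)}\longrightarrow M^{(I)}$ is an epimorphism you cannot conclude that a subobject $Z$ of $M^{(I)}$ equals the internal sum $\sum_F\bigl(Z\cap M^{(F)}\bigr)$: the ``restriction of the surjection to $Z$'' is the pullback $\bigl(\coprod_F M^{(F)}\bigr)\times_{M^{(I)}}Z$, and this is not $\coprod_F Z_F$, because forming the pullback along $Z\hookrightarrow M^{(I)}$ does not commute with the directed colimit $M^{(I)}=\varinjlim_F M^{(F)}$ in a general AB3 abelian category. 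This is not a removable technicality. In $\Acal=\mathsf{Ab}^{op}$ (which is AB3, AB4 and AB4*, but not AB5) take $M=\mathbb{Z}$, $I=\mathbb{N}$, $X=M^{(I)}$ (the group $\mathbb{Z}^{\mathbb{N}}$), $\pi=\mathrm{id}$, and let $Z=Y$ be the subobject of $X$ corresponding to the quotient map $\mathbb{Z}^{\mathbb{N}}\longrightarrow\mathbb{Z}^{\mathbb{N}}/\mathbb{Z}^{(\mathbb{N})}$ of abelian groups. Computing the relevant pushouts in $\mathsf{Ab}$ gives $Z\cap M^{(F)}=0$ for every finite $F$, while $Z\neq 0$. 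In fact the lemma itself fails there: membership of $Y$ in $\mathsf{Gen}(\mathsf{Sub}(\mathsf{add}(M)))$ translates into an embedding of $\mathbb{Z}^{\mathbb{N}}/\mathbb{Z}^{(\mathbb{N})}$ into a product of finitely generated abelian groups, which is impossible since that group contains a copy of $\mathbb{Q}$ and such products are reduced.

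In fairness, the paper's own proof hinges on the very same point: it applies right exactness of $\varinjlim_F$ to the rows $0\to N_F\to M^{(F)}\to C\to 0$, but these rows are not exact at $C$ (the maps $M^{(F)}\to C$ need not be epimorphisms), so right exactness does not yield the claimed exact sequence $\varinjlim_F N_F\to M^{(I)}\to C\to 0$, and the $\mathsf{Ab}^{op}$ example obstructs any repair from AB3 alone. Both arguments do close up under AB5: then $\varinjlim_F N_F\to M^{(I)}$ is a monomorphism with cokernel $\varinjlim_F \mathrm{im}(M^{(F)}\to C)\cong C$, which is exactly the identity $N=\sum_F N_F$ you need. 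Since the lemma is invoked in the paper (e.g.\ in Proposition \ref{generator i*exact}) only after exactness of direct limits has been established, the intended applications are safe; but your proof, like the statement of Lemma \ref{generating by finite reduction}, needs AB5 (or some comparable exactness hypothesis) to justify the step you correctly identified as the main obstacle.
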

\begin{proof}
Let $f:Y\longrightarrow X$ be the inclusion map and let $g:M^{(I)}\longrightarrow X$ be an epimorphism from a coproduct of $M$ to $X$. We consider the pullback diagram as follows
$$\xymatrix{0\ar[r]&N\ar[r]\ar[d]&M^{(I)}\ar[r]\ar[d]^g&C\ar[r]\ar@{=}[d]&0 \\ 0\ar[r]&Y\ar[r]^f&X\ar[r]&C\ar[r]&0}$$
from which we deduce that $Y$ is a quotient of $N$.
It is well-known that the coproduct $M^{(I)}$ can be written as the direct limit of finite coproducts of $M$ indexed by all finite subsets of $I$, i.e. $M^{(I)}\cong \varinjlim_{F\subseteq_{\text{fin}} I} M^{(F)}$. For each subset $F$, let $g_F$ denote the restriction of $g$ to $M^{(F)}$ and consider the pullback diagram 
$$\xymatrix{0\ar[r]&N_F\ar[r]\ar[d]&M^{(F)}\ar[r]\ar[d]^{g_F}&C\ar[r]\ar@{=}[d]&0 \\ 0\ar[r]&Y\ar[r]^f&X\ar[r]&C\ar[r]&0}$$
Since direct limits in an abelian category are always right exact, we get an exact sequence 
$$\varinjlim_{F\subseteq_{\text{fin}} I} N_F\longrightarrow M^{(I)}\longrightarrow C\longrightarrow 0$$
from which we deduce that $N$ is a quotient of $\varinjlim_{F\subseteq_{\text{fin}} I} N_F$ and, hence, a quotient of the coproduct of all $N_F$. This then shows that $Y$ is a quotient of that same coproduct, i.e. it is generated by  subobjects of finite direct sums of $M$, as wanted.
\end{proof}
\begin{remark}
Note that if $\Acal$ is well-powered, then the isoclasses of objects in $\mathsf{Sub}(\mathsf{add}(M))$ form a set!
\end{remark}

\subsection{Constructing generators I: commutation with direct limits}
Let us now begin with the actual construction of generators. For a general TTF triple in an abelian category, we have the following theorem, whose proof will also rely on generation by finite reduction. This argument is inspired by a similar construction in \cite[Proposition 4.7]{PaSa}. 

\begin{theorem}\label{TTF generator}
Let $\Acal$ be an AB3, well-powered abelian category, admitting a TTF triple $(\Ccal,\Tcal,\Fcal)$. If $\Fcal$ is closed under direct limits and there are objects $C$ and $T$ in $\Acal$ such that $\Ccal=\mathsf{Gen}(C)$ and $\Tcal=\mathsf{Gen}(T)$, then $\Acal$ has a generator.
\end{theorem}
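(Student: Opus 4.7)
The strategy is to construct a small set $\mathcal{E}$ of objects of $\Fcal$ whose direct sum is a generator of $\Acal$. Set $C' := f(C) = C/t(C)$, where $f$ is the torsion coradical for $(\Tcal, \Fcal)$. Since $\Ccal$ is closed under quotients, $f$ commutes with coproducts (as a left adjoint to $\Fcal \hookrightarrow \Acal$), and $\Hom_\Acal(\Tcal, \Fcal) = 0$, one checks that $C' \in \Ccal \cap \Fcal$ and $\Ccal \cap \Fcal = \mathsf{Gen}(C')$. Let $\mathcal{E}$ denote the set of isomorphism classes of objects $E \in \Fcal$ fitting into a short exact sequence $0 \to X \to E \to Q \to 0$ with $X \in \mathsf{Sub}(\mathsf{add}(C'))$ and $Q$ a quotient of some $T^n$. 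Well-poweredness of $\Acal$ (ensuring $\mathsf{Sub}(\mathsf{add}(C'))$ is a set) and the running hypothesis that Yoneda Ext-groups are sets make $\mathcal{E}$ genuinely a set; it contains $T$ (take $X = 0$) and all of $\mathsf{Sub}(\mathsf{add}(C'))$ (take $Q = 0$). Set $G := \bigoplus_{E \in \mathcal{E}} E$, which exists in $\Acal$ by AB3.

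The claim is that $G$ is a generator. Given $A \in \Acal$, the torsion sequence $0 \to t(A) \to A \to f(A) \to 0$, combined with the epimorphism $T^{(I)} \twoheadrightarrow t(A) \hookrightarrow A$ and a pullback of any epimorphism $G^{(J)} \twoheadrightarrow f(A)$ along $A \twoheadrightarrow f(A)$, reduces the task to producing an epimorphism from a coproduct of $G$ onto every $F \in \Fcal$. Fix $F \in \Fcal$, and let $F'' := \mathsf{tr}_G(F)$ be the sum of the images of all morphisms $E \to F$ with $E \in \mathcal{E}$. By closure of $\Fcal$ under subobjects, $F'' \in \Fcal$. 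The TTF decomposition $0 \to c(F) \to F \to F/c(F) \to 0$ together with $c(F) \in \Ccal \cap \Fcal = \mathsf{Gen}(C')$ and Lemma \ref{generating by finite reduction} (applied to the generator $C'$) show that $c(F)$ is generated by the subobjects of finite direct sums of $C'$, each of which lies in $\mathcal{E}$; hence $c(F) \subseteq F''$, and $F' := F/F''$ is a quotient of $F/c(F) \in \Tcal$, so $F' \in \Tcal$. Assume for contradiction $F' \neq 0$, pick a nonzero $\phi \colon T \to F'$, set $Q := \mathsf{Im}(\phi)$, and form the preimage $F^* \subseteq F$ of $Q$ under $F \twoheadrightarrow F'$. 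The short exact sequence $0 \to F'' \to F^* \to Q \to 0$ would need to be descended to a finite-type subextension $0 \to X \to E \to Q \to 0$ with $X \in \mathsf{Sub}(\mathsf{add}(C'))$, $E \in \mathcal{E}$, and $E \subseteq F^*$ mapping onto $Q$ — but such an $E$, being in $\mathcal{E}$ and embedded in $F$, would contribute to $F''$, forcing its image in $Q \cong F^*/F''$ to vanish, a contradiction. Hence $F' = 0$ and $F = F''$ is generated by $G$.

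The main obstacle is the descent step: showing that the extension class $\xi \in \Ext^1_\Acal(Q, F'')$ of $F^*$ is indeed represented by an extension of $Q$ by a finite-type $X \in \mathsf{Sub}(\mathsf{add}(C'))$. Expressing $F''$ as a directed union of its $\mathcal{E}$-subobjects realises $F''$ as a direct limit inside $\Fcal$, which, by the hypothesis that $\Fcal$ is closed under direct limits, is computed coherently with the ambient direct limit in $\Acal$; combining this with the generation-by-finite-reduction philosophy of Lemma \ref{generating by finite reduction}, one descends the extension class to a single finite-type piece. This is precisely where the direct-limit hypothesis on $\Fcal$ is crucial, and where all the hypotheses of the theorem conspire.
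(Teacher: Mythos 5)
Your generating set $\mathcal{E}$ --- extensions of finite $T$-pieces by finite $C$-pieces, forming a set by well-poweredness and the running assumption on Yoneda Ext --- is essentially the same as the paper's, but two steps of your argument have genuine gaps, and they are exactly the two points the paper's proof is designed to get around. First, the reduction to objects of $\Fcal$ is invalid. In an AB3 category $G$ is a generator precisely when every object is a quotient of a coproduct of copies of $G$, and $\mathsf{Gen}(G)$ is closed under quotients and coproducts but \emph{not} under extensions (e.g. $\mathbb{Z}/4\notin\mathsf{Gen}(\mathbb{Z}/2)$ in abelian groups). So from $t(A)\in\mathsf{Gen}(G)$ and $f(A)\in\mathsf{Gen}(G)$ you cannot conclude $A\in\mathsf{Gen}(G)$: your pullback yields an epimorphism $P\twoheadrightarrow A$ where $P$ is an extension of $G^{(J)}$ by $t(A)$, and since $G^{(J)}\in\Fcal$ this is again the torsion sequence of $P$ --- you have made no progress. (There is also an internal inconsistency: you require $E\in\Fcal$ in the definition of $\mathcal{E}$ yet claim $T\in\mathcal{E}$, while $T\in\Tcal$ forces $T\notin\Fcal$ unless $T=0$; and $\mathsf{Gen}(C')\subseteq\Ccal\cap\Fcal$ fails because $\Fcal$ need not be closed under quotients, though only the reverse inclusion is used.)

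Second, the ``descent step'' you yourself identify as the main obstacle is asserted, not proved, and it does not follow from the hypotheses. Descending $\xi\in\Ext^1_\Acal(Q,F'')$ along the filtered system of finite-type subobjects $X_i\subseteq F''$ requires the map $\varinjlim\Ext^1_\Acal(Q,X_i)\longrightarrow\Ext^1_\Acal(Q,F'')$ to be surjective, i.e. a finite-presentation property of $Q$ together with exactness of the relevant colimits; neither is available, since $\Acal$ is only AB3, $Q$ is an arbitrary quotient of $T^n$, and closure of $\Fcal$ under direct limits says nothing about $\Ext^1(Q,-)$. Lemma \ref{generating by finite reduction} descends \emph{epimorphisms} along finite subcoproducts using pullbacks and right-exactness of direct limits; it does not descend extension classes. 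The paper's actual use of the direct-limit hypothesis is different and avoids Ext entirely: for the universal map $C^{(U)}\to M$ one sets $X_F=\Coker(C^{(F)}\to M)$ for finite $F\subseteq U$, pulls back $t(X_F)\hookrightarrow X_F$ along $M\to X_F$ to get $M_F\to M$ with $\Coker(M_F\to M)\cong(1\colon t)(X_F)\in\Fcal$, and observes that $\varinjlim(1\colon t)(X_F)$ lies in $\Tcal\cap\Fcal=0$, so $\coprod_F M_F\to M$ is already an epimorphism; a second finite reduction using $T$ then lands in the set of generators. I suggest replacing your trace-and-contradiction argument by this pullback argument applied directly to an arbitrary object of $\Acal$.
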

\begin{proof}
Let us denote by $t$ and $c$ the torsion radicals of $(\Tcal,\Fcal)$ and $(\Ccal,\Tcal)$ respectively (similarly, $(1\colon t)$ and $(1\colon c)$ will denote the corresponding torsion coradicals). Let $M$ be an object of $\Acal$ and consider the universal morphism $u:C^{(U)}\longrightarrow M$, with $U=\Hom_\Acal(C,M)$. Since $\Ccal=\mathsf{Gen}(C)$, the image of $u$ coincides with $c(M)$ and, therefore, $\Coker(u)=(1\colon c)(M)$. For each finite subset $F$ of $U$, we consider the split map $e_F:C^{(F)}\longrightarrow C^{(U)}$ and, with $K:=\Ker(u)$, $K_F:=\Ker(u\circ e_F)$ and $X_F:=\Coker(u\circ e_F)$, we obtain the following commutative diagram, where the top right square and the bottom left square are both pullback squares.
$$\xymatrix{ 0 \ar[r] & K_F \ar[r] \ar@{=}[d] & C^{(F)} \ar[r] \ar@{=}[d] & M_F \ar[r]^{\rho_F} \ar[d]^{\omega_F}  & t(X_F) \ar@{->}[d] \ar[r] & 0 \\ 0 \ar[r] & K_F \ar[r] \ar@{->}[d]  & C^{(F)} \ar[r] \ar[d]^{e_F} & M \ar[r] \ar@{=}[d] & X_F \ar[r] \ar[d] & 0 \\ 0 \ar[r] & K \ar[r] & C^{(U)} \ar[r]^u & M \ar[r] & (1\colon c)(M) \ar[r] & 0}$$
Note that we have $\varinjlim X_F\cong (1\colon c)(M)$, which lies in $\Tcal$. Since $\varinjlim (1\colon t)(X_F)$ is a quotient of $\varinjlim X_F$, it lies in $\Tcal$. However, by assumption (3), $\varinjlim (1\colon t)(X_F)$ must also lie in $\Fcal$ and therefore it is zero. This shows that $\varinjlim t(X_F)\cong \varinjlim X_F \cong (1\colon c)(M)$. Thus, $M$ is a quotient of the coproduct of all $M_F$ where $F$ varies in the set of finite subsets of $U$. We will now build from $C$ and $T$ a set generating the objects $M_F$.

Since $\Tcal=\mathsf{Gen}(T)$, consider an epimorphism $q:T^{(U_F)} \longrightarrow t(X_F)$, where $U_F=\Hom_\Acal(T,t(X_F))$. For each finite subset $E_F \subseteq U_F$, we consider the split inclusion $s_{E_F}:T^{(E_F)}\longrightarrow T^{(U_F)}$ and we can build following commutative diagram, by two consecutive pullbacks.
$$\xymatrix{0 \ar[r] & \Ker(\rho_F) \ar[r] \ar@{=}[d] & M_{E_F} \ar[r] \ar[d] & T^{(E_F)} \ar[d] \ar[r] & 0\\ 0 \ar[r] & \Ker(\rho_F) \ar[r] \ar@{=}[d] & \overline{M_F} \ar[r] \ar[d] & T^{(U_F)} \ar[d] \ar[r] & 0\\ 0 \ar[r] & \Ker(\rho_F) \ar[r] & M_F \ar[r]^{\rho_F} & t(X_F) \ar[r] & 0}$$ 
As before, $\overline{M_F}$ is a quotient of the coproduct of all $M_{E_F}$ and therefore so is $M_F$. To complete our argument we observe that $M_{E_F}$ varies in a set, thus generating the category. Indeed, since $\Ker(\rho_F)$ is a quotient of a finite coproduct of copies of $C$, and Yoneda Ext-groups form a set in $\Acal$ by assumption, we conclude that all $M_{E_F}$ are in bijection with a subset of 
$$\underset{m,n\in \mathbb{N}}{\bigcup} \hspace{0.2 cm} \underset{V\leq C^m}{\bigcup} \hspace{0.2 cm } \Ext^{1}_{\Acal}(T^n,C^m/V)$$
thus forming a set of generators for $\Acal$, as wanted.
\end{proof}

\begin{corollary}\label{corollary exact Grothendieck}
Let $\Rcal$ be a recollement as in (\ref{rec}), and $\Acal$ an AB3 abelian category. If $i^!$ commutes with direct limits, then $\Acal$ is a Grothendieck category if and only if $\Xcal$ and $\Ycal$ are Grothendieck categories.
\end{corollary}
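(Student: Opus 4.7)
The forward implication is immediate from Lemma \ref{AllsGrothendieck}, so the real content is the converse. The plan is to verify the hypotheses of Theorem \ref{TTF generator} for the TTF triple $(\Ccal,\Tcal,\Fcal)$ associated with $\Rcal$, and to separately establish that $\Acal$ is AB5.

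First I would verify AB5 for $\Acal$: since $i^!$ commutes with direct limits and both $\Xcal$ and $\Ycal$ are AB5, Corollary \ref{corollary exact AB5}(2) immediately gives that $\Acal$ is AB5. Next, since $\Xcal$ and $\Ycal$ are Grothendieck, they are well-powered, so Proposition \ref{well-p Prop} yields that $\Acal$ is well-powered. Also, $\Fcal=\Ker(i^!)$ is closed under direct limits: if $(F_\lambda)_{\lambda\in\Lambda}$ is a direct system in $\Fcal$, then $i^!(\varinjlim_\Lambda F_\lambda)\cong \varinjlim_\Lambda i^!(F_\lambda)=0$ by the assumption on $i^!$.

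It remains to exhibit objects $C$ and $T$ with $\Ccal=\mathsf{Gen}(C)$ and $\Tcal=\mathsf{Gen}(T)$. For $\Tcal=\Img(i_*)$, pick a generator $G_\Ycal$ of $\Ycal$ and set $T:=i_*G_\Ycal$. Since $i_*$ is a left adjoint of $i^!$ it preserves coproducts, and being fully faithful it is exact (it is both a left and a right adjoint); so an epimorphism $G_\Ycal^{(I)}\twoheadrightarrow Y$ in $\Ycal$ yields an epimorphism $T^{(I)}\cong i_*G_\Ycal^{(I)}\twoheadrightarrow i_*Y$ in $\Acal$, giving $\Tcal=\mathsf{Gen}(T)$. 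For $\Ccal=\Ker(i^*)$, pick a generator $G_\Xcal$ of $\Xcal$ and set $C:=j_!G_\Xcal$. The key observation is that $\Ccal\cap\Tcal=0$ (if $X=i_*Y\in\Ccal$ then $0=i^*i_*Y=Y$), so for any $X\in\Ccal$ the canonical map $\nu_X\colon j_!j^*X\to X$ must be an epimorphism, since its cokernel lies in $\Img(i_*)=\Tcal$ but also in $\Ccal$ (as a quotient of $X\in\Ccal$, and using that $\Ccal$ is a torsion class hence closed under quotients). An epimorphism $G_\Xcal^{(I)}\twoheadrightarrow j^*X$, after applying the right exact functor $j_!$ (which preserves coproducts, being a left adjoint), yields an epimorphism $C^{(I)}\twoheadrightarrow j_!j^*X$, and composing with $\nu_X$ gives $\Ccal=\mathsf{Gen}(C)$.

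All the hypotheses of Theorem \ref{TTF generator} are then satisfied, so $\Acal$ admits a generator; together with AB5, this shows $\Acal$ is Grothendieck. I do not expect any serious obstacle in this argument: the work is already packaged into Theorem \ref{TTF generator} and Corollary \ref{corollary exact AB5}, and the only delicate bookkeeping is checking that $\nu_X$ is an epimorphism for $X\in\Ccal$, which hinges on the disjointness $\Ccal\cap\Tcal=0$ inherent in the TTF structure.
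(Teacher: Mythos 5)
Your proposal is correct and follows essentially the same route as the paper: establish AB5 via Corollary \ref{corollary exact AB5}, then verify the hypotheses of Theorem \ref{TTF generator} (well-poweredness via Proposition \ref{well-p Prop}, closure of $\Fcal$ under direct limits from the hypothesis on $i^!$, and $\Tcal=\mathsf{Gen}(i_*G_\Ycal)$, $\Ccal=\mathsf{Gen}(j_!G_\Xcal)$ using the right exactness and coproduct-preservation of $i_*$ and $j_!$). The only cosmetic difference is that your argument for $\nu_X$ being an epimorphism for $X\in\Ccal$ is slightly roundabout — the cokernel of $\nu_X$ is $i_*i^*X$, which vanishes immediately since $X\in\Ker(i^*)$ — but this does not affect correctness.
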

\begin{proof}
Suppose that $\Xcal$ and $\Ycal$ are Grothendieck. Since $i^!$ commutes with direct limits, by Corollary \ref{corollary exact AB5} $\Acal$ is AB5. We now check the assumptions of Theorem \ref{TTF generator} in order to guarantee that $\Acal$ has a generator. From Proposition \ref{well-p Prop}, since $\Xcal$ and $\Ycal$ are well-powered, then so is $\Acal$. The fact that $i^!$ commutes with direct limits guarantees that $\Fcal$ is closed under direct limits. Finally, given generators $G_\Xcal$ and $G_\Ycal$ in $\Xcal$ and $\Ycal$ respectively, since $i_*G_\Ycal$ lies in $\Tcal$, $j_!G_\Xcal$ lies in $\Ccal$ and both $\Tcal$ and $\Ccal$ are closed for coproducts and quotients, we have that $\mathsf{Gen}(j_!G_\Xcal)\subseteq\Ccal$ and $\mathsf{Gen}(i_*G_\Ycal)\subseteq\Tcal$. Since $i_*i^*T^\prime\cong T^\prime$ for any $T^\prime$ in $\Tcal$ and $C^\prime$ is a quotient of $j_!j^*C^\prime$ for any $C^\prime$ in $\Ccal$, the right exactness of $j_!$ and $i_*$ guarantee the reverse inclusions.
\end{proof}

\subsection{Constructing generators II: exact functors}
In this section we build generators under the assumption that one of $i^!$ or $i^*$ is exact. Before that, however, we begin with a simple observation.

\begin{lemma}\label{comm coprod}
Let $\Rcal$ be a recollement of abelian categories as in (\ref{rec}), with $\Acal$ satisfying AB3*, AB4 and the property that the coproduct of a set of objects in $\Acal$ is a subobject of the corresponding product. Then $i^!$ commutes with coproducts. In particular, if $\Acal$ is AB3* and AB5, then $i^!$ commutes with coproducts.
\end{lemma}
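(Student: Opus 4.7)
The plan is to exploit the torsion pair $(\Tcal,\Fcal)$ associated with the recollement, whose torsion radical is $i_*i^!$ and whose torsion coradical is $f$. Given a family $(X_\lambda)_{\lambda\in\Lambda}$ of objects in $\Acal$, the torsion decomposition produces a family of short exact sequences
$$0\longrightarrow i_*i^!X_\lambda \longrightarrow X_\lambda \longrightarrow f(X_\lambda)\longrightarrow 0.$$
Applying the coproduct functor and invoking AB4, the resulting sequence
$$0\longrightarrow \coprod_\lambda i_*i^!X_\lambda \longrightarrow \coprod_\lambda X_\lambda \longrightarrow \coprod_\lambda f(X_\lambda)\longrightarrow 0$$
is still short exact. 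The strategy is then to identify this as the torsion decomposition of $\coprod_\lambda X_\lambda$ relative to $(\Tcal,\Fcal)$.

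For that identification, I would check the outer terms. Since $\Tcal$ is a torsion class, it is closed under coproducts, so $\coprod_\lambda i_*i^!X_\lambda\in\Tcal$. For the torsionfree end, I recall that $\Fcal$ is closed under products (any morphism from $\Tcal$ into a product of objects of $\Fcal$ must vanish coordinatewise, using AB3*) and under subobjects (as any torsionfree class is). The standing hypothesis on $\Acal$ provides a monomorphism $\coprod_\lambda f(X_\lambda)\hookrightarrow \prod_\lambda f(X_\lambda)$, and since the target lies in $\Fcal$, so does the source. By the uniqueness of torsion decompositions, we obtain $i_*i^!\bigl(\coprod_\lambda X_\lambda\bigr)\cong \coprod_\lambda i_*i^!X_\lambda$. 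Since $i_*$ is fully faithful and, as a left adjoint to $i^!$, preserves coproducts, this descends to the desired isomorphism $i^!\bigl(\coprod_\lambda X_\lambda\bigr)\cong \coprod_\lambda i^!X_\lambda$.

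For the ``in particular'' statement, it suffices to verify that AB3* together with AB5 forces the canonical comparison map $\coprod_\lambda X_\lambda \longrightarrow \prod_\lambda X_\lambda$ to be a monomorphism. Indeed, one writes the coproduct as the direct limit $\varinjlim_{F\subseteq_{\text{fin}}\Lambda}\bigoplus_{\lambda\in F}X_\lambda$; each finite coproduct embeds canonically into the product as a direct summand, and AB5 ensures that the direct limit of these monomorphisms remains monic, so the full coproduct embeds into the product and the hypothesis of the first statement applies. I do not foresee a substantial obstacle here: the argument is essentially a bookkeeping exercise in torsion-pair theory, and the only delicate point is precisely the one that the hypothesis is designed to resolve, namely the lack of closure of $\Fcal$ under coproducts, which the embedding into products circumvents.
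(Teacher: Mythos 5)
Your proposal is correct and follows essentially the same route as the paper: apply AB4 to the coproduct of the torsion decompositions, use the embedding of the coproduct into the product together with closure of $\Fcal$ under products and subobjects to recognise the result as the torsion decomposition of the coproduct, and deduce the ``in particular'' clause by writing the coproduct-to-product comparison map as a direct limit of split monomorphisms from finite subcoproducts. Your write-up merely makes explicit the final descent from $i_*i^!$ to $i^!$ via full faithfulness, which the paper leaves implicit.
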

\begin{proof}
Given a family $(M_\lambda)_{\lambda\in\Lambda}$ in $\Acal$, since coproducts are exact by assumption, we get a sequence
$$0\longrightarrow \coprod_{\lambda\in\Lambda}i_*i^!M_\lambda\longrightarrow \coprod_{\lambda\in\Lambda}M_\lambda\longrightarrow \coprod_{\lambda\in\Lambda}f(M_\lambda)\longrightarrow 0.$$
Now, since coproducts are subobjects of products, $\Fcal$ is closed for coproducts. Hence, this sequence must be the torsion decomposition of $\coprod_{\lambda\in\Lambda}M_\lambda$ with respect to the torsion pair $(\Tcal,\Fcal)$. This shows that $\coprod_{\lambda\in\Lambda}i_*i^!M_\lambda=i_*i^!\coprod_{\lambda\in\Lambda}M_\lambda$, as wanted. For the final statement, recall that the canonical morphism from the coproduct of a family of objects to the product of the same family is the direct limit of the of the (split) monomorphisms from finite sub-coproducts to the product. Hence the result follows for an AB5 abelian category with products.
\end{proof}

This allows us to state the following easy proposition.

\begin{proposition}\label{generator i! exact}
Let $\Rcal$ be a recollement of abelian categories as in (\ref{rec}), with $\Acal$ an AB3 and AB3* abelian category. Suppose that $i^!$ is exact. Then $\Acal$ is Grothendieck if and only if $\Xcal$ and $\Ycal$ are Grothendieck and $\Rcal$ is directed.
\end{proposition}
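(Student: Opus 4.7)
The plan is as follows. The forward direction is immediate: Lemma \ref{AllsGrothendieck} gives that $\Xcal$ and $\Ycal$ are Grothendieck whenever $\Acal$ is, and since $\Acal$ is AB5, every torsion pair of $\Acal$ (in particular $(\Ccal,\Tcal)$ and $(\Tcal,\Fcal)$) is directed, so $\Rcal$ is directed.

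For the reverse direction, assume $\Xcal$ and $\Ycal$ are Grothendieck and $\Rcal$ is directed. The strategy is to verify the hypotheses of Theorem \ref{TTF generator} for the TTF triple $(\Ccal,\Tcal,\Fcal)$ associated to $\Rcal$, and to establish AB5 independently. First, Theorem \ref{AB5}(1) applies (since $\Xcal$, $\Ycal$ are AB5 and $\Rcal$ is directed) to give that $\Acal$ is AB5, and Proposition \ref{well-p Prop} yields that $\Acal$ is well-powered. Next, to apply Theorem \ref{TTF generator} we need generators for $\Ccal$ and $\Tcal$ as torsion classes and the closure of $\Fcal$ under direct limits. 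For the generators we argue exactly as in the proof of Corollary \ref{corollary exact Grothendieck}: if $G_\Xcal$ and $G_\Ycal$ are generators of $\Xcal$ and $\Ycal$ respectively, then $j_!G_\Xcal\in\Ccal$ and $i_*G_\Ycal\in\Tcal$; the inclusions $\mathsf{Gen}(j_!G_\Xcal)\subseteq\Ccal$ and $\mathsf{Gen}(i_*G_\Ycal)\subseteq\Tcal$ follow from $\Ccal,\Tcal$ being closed under coproducts and quotients, while the reverse inclusions are obtained from the equivalences $j^*j_!\cong 1_\Xcal$, $i^*i_*\cong 1_\Ycal$ combined with the right exactness of $j_!$ and $i_*$.

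The crucial step is showing that $\Fcal=\Ker(i^!)$ is closed under direct limits. Since $\Acal$ is AB3* by hypothesis and AB5 by the paragraph above, Lemma \ref{comm coprod} ensures that $i^!$ commutes with coproducts. Since $i^!$ is assumed exact and, in any AB3 abelian category, a direct limit is the cokernel of an explicit morphism between two coproducts of the terms of the diagram, it follows formally that $i^!$ commutes with direct limits. Therefore $\Fcal=\Ker(i^!)$ is stable under direct limits, completing the verification of Theorem \ref{TTF generator}'s hypotheses and producing a generator for $\Acal$.

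The main obstacle is precisely this last step: in contrast with Corollary \ref{corollary exact Grothendieck}, commutation of $i^!$ with direct limits is not a direct hypothesis and must be deduced. The key trick is to use AB3* and the already-derived AB5 on $\Acal$ to feed Lemma \ref{comm coprod} and upgrade the exactness of $i^!$ (which only guarantees commutation with cokernels) to full commutation with direct limits.
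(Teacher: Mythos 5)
Your proof is correct and follows essentially the same route as the paper: the paper likewise deduces AB5 from Theorem \ref{AB5}, uses Lemma \ref{comm coprod} together with the exactness of $i^!$ to conclude that $i^!$ commutes with direct limits, and then simply cites Corollary \ref{corollary exact Grothendieck}. You have merely unfolded the proof of that corollary (well-poweredness, generators of $\Ccal$ and $\Tcal$, closure of $\Fcal$ under direct limits, and Theorem \ref{TTF generator}) instead of invoking it directly.
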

\begin{proof}
If $\Xcal$ and $\Ycal$ are Grothendieck and $\Rcal$ is directed, then by Theorem $\ref{AB5}$, $\Acal$ is AB5. It also follows from Lemma \ref{comm coprod} that $i^!$ commutes with coproducts and, therefore, since it is exact by assumption, it commutes with direct limits. The result then follows from Corollary \ref{corollary exact Grothendieck}.
\end{proof}

\begin{remark}
There is an advantage in the construction of a generator when $i^!$ is exact.  If $\Acal$ is AB3 as before and if $G_\Xcal$ and $G_\Ycal$ are generators in $\Xcal$ and $\Ycal$ respectively, then it is easy to see that, since $Ext^1_\Acal(j_!G_\Xcal,\Tcal)=0$, the object $j_!G_\Xcal\oplus i_*G_\Ycal$ is a generator of $\Acal$.
\end{remark}

Let us proceed to the more delicate case of when $i^*$ is exact. We begin by investigating what kind of necessary conditions occur when $\Acal$ is a Grothendieck category (or even more generally, an AB4 abelian category with a generator). In such case, given a generator $G$ of $\Acal$ we have, in particular, $\textsf{Im}(j_{*})\subseteq \textsf{Gen}(G)$. Moreover, we also have that $\Img(j_*)\subseteq \mathsf{Gen}(j_*j^*G)$. Indeed, given $M$ in the image of $j_*$ (i.e. $M\cong j_*j^*M$), let $\alpha=\textsf{Hom}_{\Acal}(G,M)$ and denote by $u:G^{(\alpha)}\longrightarrow M$ the universal map, which is an epimorphism. This map must clearly factor through $j_*j^*(G^{(\alpha)})$. Considering the canonical morphism $\theta:(j_*j^*G)^{(\alpha)}\longrightarrow j_*j^*(G^{(\alpha)})$ and using the fact that coproducts are exact in $\Acal$, we get the following commutative diagram with exact rows.
$$\xymatrix{0 \ar[r] & (i_*i^{!}G)^{(\alpha)} \ar[r] \ar[d]  & G^{(\alpha)} \ar[r] \ar@{=}[d]  & (j_{*}j^*G)^{(\alpha)} \ar[d]^\theta \\ 0 \ar[r] & i_*i^{!}(G^{(\alpha)}) \ar[r] & G^{(\alpha)} \ar@{>>}[d]^u \ar[r] & j_{*}j^*(G^{(\alpha)}) \ar[d] \\ && M \ar[r]^\cong & j_{*}j^{*}(M)}$$
It is then easy to conclude from the commutativity of the diagram that the composition in the last column of the diagram is an epimorphism, as wanted. Note also that $j^*G$ is a generator of $\Xcal$ (see also Lemma \ref{AllsGrothendieck}). The following proposition proves that given a recollement where $\Xcal$ and $\Ycal$ are Grothendieck, then this peculiar property of having an object generating the image of $j_*$ suffices to build a generator in $\Acal$.
\begin{proposition}\label{generator i*exact}
Let $\Rcal$ be a recollement as in (\ref{rec}) such that $i^{*}$ is exact and $\Acal$ is an AB3 abelian category. Then $\Acal$ is a Grothendieck category if and only if $\Xcal$ and $\Ycal$ are Grothendieck abelian categories and there exists $G_\Xcal$ generator of $\Xcal$ such that $\textsf{Im}(j_*)\subseteq \textsf{Gen}(j_*G_\Xcal)$.
\end{proposition}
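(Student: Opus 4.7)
The forward direction is immediate from Lemma~\ref{AllsGrothendieck} and the discussion preceding the proposition: if $\Acal$ is Grothendieck with generator $G$, then $\Xcal$ and $\Ycal$ are Grothendieck and $G_\Xcal := j^*G$ is a generator of $\Xcal$ for which the condition $\Img(j_*) \subseteq \mathsf{Gen}(j_*G_\Xcal)$ is exactly the calculation sketched just before the proposition.

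For the converse, the first step is to verify that $\Acal$ is AB5 and well-powered. Since $i^*$ is exact, Example~\ref{example directed}(2) shows that $\Rcal$ is $\mathsf{colim}_\Lambda$-exact for every directed category $\Lambda$, hence directed; combined with $\Xcal$ and $\Ycal$ being AB5 (as Grothendieck categories), Theorem~\ref{AB5} yields that $\Acal$ is AB5. Well-poweredness of $\Acal$ follows from Proposition~\ref{well-p Prop}. It remains to produce a generator.

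Fix a generator $G_\Ycal$ of $\Ycal$ and let $\mathcal{S}$ be a set of representatives of isomorphism classes in $\mathsf{Sub}(\mathsf{add}(j_*G_\Xcal))$ (a set, by well-poweredness). I propose the candidate
\[
G \,:=\, i_*G_\Ycal \,\oplus\, \bigoplus_{S \in \mathcal{S},\ \eta \in \Ext^1_\Acal(S, i_*G_\Ycal)} E_{S,\eta},
\]
where $E_{S,\eta}$ is a chosen extension of $S$ by $i_*G_\Ycal$ representing $\eta$; this is set-indexed thanks to our running assumption on Yoneda Ext groups. To show $G$ generates $\Acal$, take any $M \in \Acal$ with torsion decomposition $0 \to i_*i^!M \to M \to f(M) \to 0$. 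Applying Lemma~\ref{generating by finite reduction} to the inclusion $f(M) \hookrightarrow j_*j^*M \in \mathsf{Gen}(j_*G_\Xcal)$ gives $f(M) \in \mathsf{Gen}(\mathcal{S})$; pulling back an epimorphism $\bigoplus_\beta S_\beta \twoheadrightarrow f(M)$ along $M \twoheadrightarrow f(M)$ yields a surjection onto $M$ from an extension $P$ of $\bigoplus_\beta S_\beta$ by $i_*i^!M$. Finite reduction (using AB5) on the coproduct $\bigoplus_\beta S_\beta$, together with further pullbacks, reduces the problem to showing that each extension $P_\beta$ of an individual $S_\beta \in \mathcal{S}$ by $i_*i^!M$ lies in $\mathsf{Gen}(G)$; a pushout along an epimorphism $i_*G_\Ycal^{(\alpha)} \twoheadrightarrow i_*i^!M$ makes $P_\beta$ a quotient of an extension $Q_\beta$ of $S_\beta$ by $i_*G_\Ycal^{(\alpha)}$, so the task becomes to show $Q_\beta \in \mathsf{Gen}(G)$.

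This last reduction is the technical heart of the argument and the main obstacle. When $\alpha$ is finite the isomorphism $\Ext^1_\Acal(S_\beta, i_*G_\Ycal^{(A)}) \cong \Ext^1_\Acal(S_\beta, i_*G_\Ycal)^{|A|}$ realises $Q_\beta$ as a Baer-style pullback of single-summand extensions, each isomorphic to some $E_{S_\beta, \eta}$. For infinite $\alpha$ this decomposition is no longer available, and the plan is to exploit AB5 by writing $i_*G_\Ycal^{(\alpha)} = \varinjlim_A i_*G_\Ycal^{(A)}$ over finite $A \subseteq \alpha$, exhibiting $Q_\beta$ (via a compatible family of pushouts along the canonical projections $i_*G_\Ycal^{(\alpha)} \twoheadrightarrow i_*G_\Ycal^{(A)}$) as a direct limit of extensions to which the finite case applies, and then transferring membership in $\mathsf{Gen}(G)$ to the limit using exactness of direct limits in $\Acal$.
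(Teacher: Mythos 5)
Your forward direction and your verification that $\Acal$ is AB5 and well-powered match the paper exactly. The construction of the generator, however, has a genuine gap at precisely the point you flag as ``the technical heart''. In your torsion decomposition $0\to i_*i^!M\to M\to f(M)\to 0$ the uncontrolled object $i_*i^!M$ sits in the \emph{subobject} position of the extension, and there is no general mechanism to replace it by copies of $i_*G_\Ycal$: the pushout construction along the epimorphism $q\colon i_*G_\Ycal^{(\alpha)}\twoheadrightarrow i_*i^!M$ goes the wrong way. To realise $P_\beta$ as the pushout of some class in $\Ext^1_\Acal(S_\beta,i_*G_\Ycal^{(\alpha)})$ you need $\Ext^1_\Acal(S_\beta,q)$ to be surjective, and the obstruction lives in $\Ext^2_\Acal(S_\beta,\Ker(q))$ --- there is no reason for it to vanish. (Contrast Theorem \ref{TTF generator}, where the object being replaced sits in the \emph{quotient} position and one genuinely can pull back along an epimorphism.) Two further steps also fail: a fibre product $E_{S,\eta_1}\times_S\cdots\times_S E_{S,\eta_n}$ is a subobject, not a quotient, of $\bigoplus_i E_{S,\eta_i}$, and $\mathsf{Gen}(G)$ is closed under quotients and coproducts but not under subobjects, so even the finite case does not place $Q_\beta$ in $\mathsf{Gen}(G)$; and the projections $i_*G_\Ycal^{(\alpha)}\to i_*G_\Ycal^{(A)}$ are compatible contravariantly, so your pushouts form an inverse rather than a direct system and the proposed direct-limit argument for infinite $\alpha$ does not parse.

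The paper sidesteps all of this by exploiting the exactness of $i^*$ a second time. Since $\Ccal\subseteq\Fcal$ (Lemma \ref{exact TTF}), the canonical map $\nu_M$ is a monomorphism and there is a short exact sequence $0\to j_!j^*M\to M\to i_*i^*M\to 0$. Pushing this out along the monomorphism $j_!j^*M\to j_*j^*M$ and using that $\Ext^1_\Acal(\Tcal,\Img(j_*))=0$ (an adjunction argument), the resulting sequence splits, so $M$ embeds into $j_*j^*M\oplus i_*i^*M$, which lies in $\mathsf{Gen}(j_*G_\Xcal\oplus i_*G_\Ycal)$ by hypothesis. A single application of Lemma \ref{generating by finite reduction} then shows that the set $\mathsf{Sub}(\mathsf{add}(j_*G_\Xcal\oplus i_*G_\Ycal))$ generates $\Acal$, and no extension classes need to be controlled at all. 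The missing idea in your argument is exactly this: turn $M$ into a \emph{subobject of a direct sum} rather than a quotient of an extension.
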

\begin{proof}
Following the previous paragraph, we only need to prove one direction. Let $\Xcal$ and $\Ycal$ be Grothendieck categories. Since $i^*$ is exact, it follows from Example \ref{example directed}(2) and Theorem \ref{AB5} that $\Acal$ is AB5. Hence, we only need to show that $\Acal$ has a generator. Under our assumption, Lemma \ref{exact TTF} shows that $\Ccal \subseteq \Fcal$ and, for each object $M$ in $\Acal$, we have an exact sequence
$$\xymatrix{0 \ar[r] & j_!j^{*}(M) \ar[r] & M \ar[r] & i_*i^{*}(M) \ar[r] & 0.}$$
Moreover, the canonical morphism $j_{!}j^{*}(M) \longrightarrow j_{*}j^{*}(M)$ is a monomorphism and we can consider the pushout of the above sequence along this map. Since $\textsf{Ext}^{1}_{\Acal}(\Tcal,\textsf{Im}(j_*))=0$, this new exact sequence will split and we obtain that $M$ is a subobject of $j_*j^{*}(M)\oplus i_*i^{*}(M)$. Hence, by Lemma \ref{generating by finite reduction}, it is enough to find an object that generates objects of the form $j_*j^{*}(M)\oplus i_*i^{*}(M)$. The result then follows since, by assumption, $j_*j^{*}(M)\oplus i_*i^{*}(M)$ lies in $\textsf{Gen}(j_*(G_\Xcal)\oplus i_*(G_\Ycal))$, for some $G_\Ycal$ generator of $\Ycal$.
\end{proof}

In all cases explored in this section, we have built generators in $\Acal$ from the images of generators of $\Xcal$ and $\Ycal$ via the fully faithful functors in the recollement, and using generation by finite reduction.
\begin{question}\label{question}
Let $\Rcal$ be a recollement as in (\ref{rec}) such that $\Acal$ is AB3, and suppose that $G_\Xcal$ and $G_\Ycal$ are generators of $\Xcal$ and $\Ycal$ respectively. Is it true that the set of subojects of finite direct sums of $j_*G_\Xcal\oplus j_!G_\Xcal\oplus i_*G_\Ycal$ generates $\Acal$?
\end{question}

\section{Recollements of hearts}\label{Section hearts}

As recalled in Section 2, one way of building examples of recollements of abelian categories is via glueing t-structures in a recollement of triangulated categories. 
Many recollements of abelian categories arise in this way, as portrayed in Propositions \ref{rec hearts suf} and \ref{rec hearts suf2}. It is often the case that little is known about the hearts obtained by glueing t-structures. The techniques and results developed in the previous sections contribute to shed some light on this problem.

\subsection{An example}\label{example}
We begin with an example of a recollement of hearts where the outer terms are Grothendieck categories (even categories of vector spaces over a field) and the middle term is not an AB5 abelian category. In particular, this provides a counterexample to the converse of Proposition \ref{restriction AB5}.

Let $A$ be the path algebra over a field $\mathbb{K}$ of a quiver with two vertices, 1 and 2,
and countably many arrows from 1 to 2 (the countable Kronecker). Equivalently, we may think of $A$ as the matrix ring 
$$A=\left(\begin{array}{cc} \mathbb{K} & 0 \\ \mathbb{K}^{(\mathbb{N})} & \mathbb{K}\end{array}\right)$$
Let $\mathsf{Mod}$-$A$ denote the category of right $A$-modules and let $e=e_1$ be the idempotent element associated with the vertex 1. It is well-known that it induces a recollement of module categories as follows: \\
$$ \xymatrix@C=0.5cm{\mathsf{Mod}\text{-}A/AeA \ar[rrr]^{f_*} &&& \mathsf{Mod}\text{-}A \ar[rrr]^{(-)e}  \ar @/_1.5pc/[lll]_{-\otimes_AA/AeA}  \ar @/^1.5pc/[lll]^{\Hom_A(A/AeA,-)} &&& \mathsf{Mod}\text{-}eAe\ar @/_1.5pc/[lll]_{-\otimes_{eAe}eA} \ar @/^1.5pc/[lll]^{\Hom_{eAe}(Ae,-)}} $$\\
where $f_*$ is the restriction of scalars functor associated to the ring epimorphism $f:A\longrightarrow A/AeA$. Note that, since $A$ is hereditary and $\mathsf{Tor}_1^A(A/AeA,A/AeA)=0$, this recollement of module categories induces naturally a recollement of their derived module categories (see \cite{GL}, and compare with Proposition \ref{rec hearts suf2}).\\ 
$$ \xymatrix@C=0.5cm{D(A/AeA) \ar[rrr]^{f_*} &&& D(A) \ar[rrr]  \ar @/_1.5pc/[lll]  \ar @/^1.5pc/[lll] &&& D(e_1Ae_1)\ar @/_1.5pc/[lll] \ar @/^1.5pc/[lll]} $$\\
Since $\mathsf{Mod}\text{-}A$ is AB5 (and thus $(\Tcal, \Fcal)$ is a directed torsion pair), if $\mathcal{F}$ were closed for direct limits, then $\Hom_{A}(A/AeA,-)$ would commute with direct limits, which is not the case since $A/AeA$ is not finitely presented. Thus, $\Fcal$ is not closed for direct limits and, therefore, by \cite[Theorem 4.8]{PaSa}, the heart 
$$\Hcal=\{X\in D(A): H^{-1}(X)\in\Fcal, H^0(X)\in\Tcal, H^k(X)=0, \forall k\neq -1,0\}$$ 
of the t-structure in $D(A)$ associated with the torsion pair $(\Tcal,\Fcal)$ (see \cite[Proposition 2.1]{HRS} for the construction of this t-structure) is not an AB5 abelian category. Using the techniques explored in detail in \cite[Section 6]{LVY}, there is a recollement of hearts of the form\\
$$ \xymatrix@C=0.5cm{\mathsf{Mod}\text{-}A/AeA \ar[rrr] &&& \Hcal \ar[rrr]  \ar @/_1.5pc/[lll]  \ar @/^1.5pc/[lll] &&& \mathsf{Mod}\text{-}eAe[1]\ar @/_1.5pc/[lll] \ar @/^1.5pc/[lll]} $$\\

\noindent where the outer terms are AB5 (even Grothendieck) categories and the middle term is not an AB5 abelian category. It is known from \cite[Proposition 3.3]{PaSa} that $\Hcal$ is AB4 and AB4*, and it also follows from Proposition \ref{well-p Prop} that it is well-powered. For more details on constructing recollements of hearts from recollements of abelian categories in the way exemplified above, we refer to \cite[Section 10]{PsSurvey}.

\subsection{Hearts in well generated triangulated categories} We aim to prove that in a recollement of hearts arising from \textit{nice enough} triangulated categories, the question of whether the property of being Grothendieck abelian glues well can be simplified. We recall the definition of the type of triangulated categories we will work with (see \cite{Neemantria,Krause}).

\begin{definition}
Given a regular cardinal $\alpha$ and $\Tcal$ a triangulated category, we say that 
\begin{itemize}
\item an object $X$ in $\Tcal$ is \textbf{$\alpha$-small} if given any map $h:X\longrightarrow \coprod_{\lambda\in\Lambda} Y_\lambda$ for some family of objects $(Y_\lambda)_{\lambda\in\Lambda}$ in $\Tcal$, the map $h$ factors through a subcoproduct $\coprod_{\omega\in \Omega}Y_\omega$ where $\Omega$ is a subset of $\Lambda$ of cardinal strictly less than $\alpha$;
\item $\Tcal$ is \textbf{$\alpha$-well generated} if it has set-indexed coproducts and it has a set of objects $\Scal$ such that
\begin{enumerate}
\item[(G1)] if $X$ is an object of $\Tcal$ such that $\Hom_\Tcal(S,X)=0$ for all $S\in\Scal$, then $X=0$;
\item[(G2)] for every set of maps $(g_\lambda:X_\lambda\longrightarrow Y_\lambda)_{\lambda\in \Lambda}$ in $\Tcal$, if $\Hom_\Tcal(S,g_\lambda)$ is surjective for all $\lambda$ in $\Lambda$ and all $S$ in $\Scal$, then $\Hom_\Tcal(S,\coprod_{\lambda\in \Lambda}g_\lambda)$ is surjective for all $S$ in $\Scal$;
\item[(G3)] every object $S$ in $\Scal$ is $\alpha$-small.
\end{enumerate}
\item $\Tcal$ is \textbf{well generated} if it is $\alpha$-well generated for some regular cardinal $\alpha$. 
\item if $\Tcal$ is $\alpha$-well generated by a set $\Scal$, then we say that an object $X$ in $\Tcal$ is \textbf{$\beta$-compact} (for $\beta\geq \alpha$) if $X$ lies in the smallest triangulated subcategory closed under coproducts with less than $\beta$ factors and containing $\Scal$; we denote the category of $\beta$-compact objects by $\Tcal^\beta$.
\end{itemize}
\end{definition}

It is known that in an $\alpha$-well generated triangulated category $\Tcal$, the subcategories $\Tcal^\beta$, for $\beta\geq \alpha$ are small and do not depend on the set of $\alpha$-small generators $\Scal$ (\cite[Lemma 5]{Krause}). Moreover we have that $\Tcal$ is the union, over all $\beta\geq \alpha$ of $\Tcal^\beta$ (\cite[Corollary]{Krause}). If $\Tcal$ is $\alpha$-well generated, we denote by $\mathsf{Mod}\text{-}\Tcal^\alpha$ the category of additive (contravariant) functors $(\Tcal^{\alpha})^{\text{op}}\longrightarrow \mathsf{Mod}\text{-}\mathbb{Z}$. This is known to be an AB4 and AB4* abelian category with exact $\Lambda$-direct limits for any directed category $\Lambda$ with cardinality $\alpha$ (and, thus, for any directed category with cardinality $\beta\geq \alpha$), and with enough projectives (the representable functors $\Hom_\Tcal(-,X)$ with $X$ in $\Tcal^\alpha$, see \cite{Neemantria,Krause0} for details). 

We are particularly interested in well generated triangulated categories because of the following result.

\begin{proposition}\label{generator heart}
Let $\Acal$ be an AB5 abelian category. Then the following are equivalent.
\begin{enumerate}
\item $\Acal$ is Grothendieck.
\item $D(\Acal)$ exists and is well generated.
\item $\Acal$ is the heart of a smashing t-structure in a well generated triangulated category.
\end{enumerate}
\end{proposition}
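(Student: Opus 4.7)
The plan is to establish the cycle $(1)\Rightarrow(2)\Rightarrow(3)\Rightarrow(1)$. The first implication combines two classical results: for $\Acal$ Grothendieck, $D(\Acal)$ exists with hom-sets by the existence of K-injective resolutions (Alonso Tarr\'{\i}o--Jerem\'{\i}as L\'opez--Souto Salorio, and Serp\'e), and $D(\Acal)$ is then well-generated by a theorem of Neeman. For $(2)\Rightarrow(3)$, the heart of the standard t-structure on $D(\Acal)$ is $\Acal$; since $\Acal$ is AB5 and hence AB4, set-indexed coproducts are exact in $\Acal$, so the coaisle $D^{\geq 1}(\Acal)$ is closed under coproducts and the standard t-structure is smashing.

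The real work lies in $(3)\Rightarrow(1)$. Assume $\Acal$ is the heart of a smashing t-structure in an $\alpha$-well generated triangulated category $\Dcal$ with $\alpha$-small generating set $\Scal\subseteq\Dcal^\alpha$. Two general observations drive the argument. First, combining the truncation adjunctions shows that $H^0\colon\Dcal\to\Acal$ is left adjoint to the inclusion $\epsilon\colon\Acal\hookrightarrow\Dcal$, yielding the natural bijection $\Hom_\Dcal(X,\epsilon A)\cong\Hom_\Acal(H^0 X,A)$ for every $X\in\Dcal$ and $A\in\Acal$. Second, the smashing hypothesis ensures, by \cite[Proposition 3.3]{PaSa}, that $H^0$ commutes with set-indexed coproducts. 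Since $\Dcal^\alpha$ is essentially small (\cite[Lemma 5]{Krause}), the collection $\Scal'=\{H^n(X)\mid X\in\Dcal^\alpha,\,n\in\Zbb\}$ is a set, and $G:=\coprod_{T\in\Scal'}T$ is a candidate generator of $\Acal$.

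To verify that $G$ is a generator, it suffices to exhibit, for every nonzero $f\colon A\to B$ in $\Acal$, some $T\in\Scal'$ and a morphism $\phi\colon T\to A$ with $f\phi\neq 0$. Since $\Img(f)\neq 0$ in $\Acal\subseteq\Dcal$, condition (G1) yields $S\in\Scal$ with a nonzero $S\to\Img(f)$ in $\Dcal$, which via the adjunction corresponds to a nonzero $\psi\colon H^0(S)\to\Img(f)$ in $\Acal$. Pulling back $\psi$ along the canonical epimorphism $A\twoheadrightarrow\Img(f)$ produces $P\in\Acal$, a short exact sequence $0\to\Ker(f)\to P\to H^0(S)\to 0$, and a morphism $\pi\colon P\to A$ with $f\pi\neq 0$ (it factors as the composite of the pullback epimorphism $P\twoheadrightarrow H^0(S)$ with $\psi$ and with the monomorphism $\Img(f)\hookrightarrow B$). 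The main obstacle, and the technical core of the argument, is that $P$ need not itself belong to $\Scal'$, since $\Ker(f)$ may fail to be in $\Dcal^\alpha$; one must establish that $P\in\mathsf{Gen}(\Scal')$. I would approach this either by embedding $\Acal$ fully faithfully into the Grothendieck category $\mathsf{Mod}\text{-}\Dcal^\alpha$ via the restricted Yoneda functor $A\mapsto\Hom_\Dcal(-,\epsilon A)|_{\Dcal^\alpha}$ and adapting a Gabriel--Popescu-type argument using the smashing property, or by a transfinite induction exploiting the filtration $\Dcal=\bigcup_{\beta\geq\alpha}\Dcal^\beta$ (\cite[Corollary]{Krause}) together with Lemma~\ref{generating by finite reduction}.
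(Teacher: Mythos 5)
Your implications $(1)\Rightarrow(2)\Rightarrow(3)$ agree with the paper's (which cites Spaltenstein and Neeman for the first and takes the standard t-structure for the second), but your argument for $(3)\Rightarrow(1)$ contains one outright error and one admitted hole located exactly where the work has to happen. The error is the claimed adjunction $\Hom_\Dcal(X,\epsilon A)\cong\Hom_\Acal(H^0X,A)$ for \emph{every} $X\in\Dcal$: the functor $H^0$ is left adjoint to the inclusion of the heart into the \emph{aisle} only, not into all of $\Dcal$. For $X$ outside the aisle the two sides differ; for instance in $D(\Zbb)$ one has $H^0(\Zbb/p[-1])=0$ while $\Hom_{D(\Zbb)}(\Zbb/p[-1],\Zbb)\cong\Ext^1_{\Zbb}(\Zbb/p,\Zbb)\neq 0$. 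Consequently the nonzero map $S\to\Img(f)$ furnished by (G1) need not induce a nonzero $\psi\colon H^0(S)\to\Img(f)$; it may factor through the co-truncation of $S$ and die under $H^0$. This particular step could be repaired by first truncating $S$ into the aisle (and allowing all shifts, so that some $H^n(S)$ admits a nonzero map to $\Img(f)$), but as written it does not go through.

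The more serious issue is that you explicitly defer the crux --- showing $P\in\mathsf{Gen}(\Scal')$ --- to two strategies you only gesture at (``I would approach this either by \dots or by \dots''). That claim \emph{is} the content of $(3)\Rightarrow(1)$, so the proposal is a plan rather than a proof. The paper closes this gap by a different and much shorter mechanism: since $\Acal$ is AB5 and the t-structure is smashing, $H^0$ is a coproduct-preserving cohomological functor, hence by the universal property of the restricted Yoneda functor (\cite[Proposition 6.10.1]{Krause}) it factors as $H^0=F\circ\mathbf{y}_\alpha$ with $F\colon\mathsf{Mod}\text{-}\Dcal^\alpha\longrightarrow\Acal$ exact; since $F$ is dense and $\mathsf{Mod}\text{-}\Dcal^\alpha$ has the set of generators $\{\mathbf{y}_\alpha(X):X\in\Dcal^\alpha\}$, the set $\{H^0(X):X\in\Dcal^\alpha\}$ generates $\Acal$. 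Your first alternative (embedding into $\mathsf{Mod}\text{-}\Dcal^\alpha$) points in the right direction, but the effective move is not a Gabriel--Popescu argument for $\Acal$ itself; it is to factor $H^0$ through $\mathsf{Mod}\text{-}\Dcal^\alpha$ and push its generators forward along the exact dense functor $F$.
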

\begin{proof}
It is well-known that the derived category of a Grothendieck category exists (\cite{Spaltenstein}) and that it is well generated (\cite{Neeman0}), yielding that (1) implies (2). Also, considering the standard t-structure in $D(\Acal)$, we easily obtain that (2) implies (3). It remains to prove that (3) implies (1).

Let $\Dcal$ be a well generated triangulated category with a smashing t-structure of which $\Acal$ is the heart and whose asssociated cohomological functor is $H^0\colon\Dcal\longrightarrow \Acal$. Let $\alpha$ be a regular cardinal such that $\Dcal$ is $\alpha$-well generated and consider the Yoneda functor $\mathbf{y}_\alpha:\Dcal\longrightarrow \mathsf{Mod}\text{-}\Dcal^\alpha$ sending and object $X$ to the functor $\Hom_{\Dcal}(-,X)_{|\Dcal^\alpha}$. Since $\Acal$ is assumed to be AB5, we have that in particular that direct limits over directed categories with cardinality $\alpha$ are exact in $\Acal$ and since the t-structure is smashing, the functor $H^0$ commutes with coproducts. Therefore, from \cite[Proposition 6.10.1]{Krause} $H^0$ must factor through $\mathbf{y}_\alpha$, i.e. there is a unique exact functor $F:\mathsf{Mod}\text{-}\Dcal^\alpha\longrightarrow \Acal$ such that the following diagram commutes.
$$\xymatrix{\Dcal\ar[rr]^{\mathbf{y}_\alpha}\ar[rd]_{H^0}&&\mathsf{Mod}\text{-}\Dcal^\alpha\ar[ld]^F\\ & \Acal}$$
Since $F$ is exact and dense and $\mathsf{Mod}\text{-}\Dcal^\alpha$ has a set of generators (namely, $\{\mathbf{y}_\alpha(X):X\in\Dcal^\alpha\}$), then also $\Acal$ has a set of generators (namely, $\{H^0(X):X\in \Dcal^\alpha\}$).
\end{proof}

\begin{corollary}\label{rec Grothendieck hearts}
Let $\Rcal$ be a recollement of abelian categories as in (\ref{rec}). If $\Rcal$ is a recollement of hearts of smashing t-structures glued from a recollement of well generated triangulated categories, then $\Acal$ is Grothendieck if and only if $\Xcal$ and $\Ycal$ are Grothendieck and $\Rcal$ is directed.
\end{corollary}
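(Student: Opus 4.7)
The plan is to combine Lemma \ref{AllsGrothendieck}, Theorem \ref{AB5}, and Proposition \ref{generator heart} in a straightforward way, with the hypothesis on the ambient triangulated categories feeding directly into the third of these.

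For the forward direction, suppose $\Acal$ is Grothendieck. Then Lemma \ref{AllsGrothendieck} immediately gives that $\Xcal$ and $\Ycal$ are Grothendieck. Moreover, since $\Acal$ is AB5, direct limits over any directed category are exact, so for each of the two torsion pairs $(\Ccal,\Tcal)$ and $(\Tcal,\Fcal)$ the canonical natural transformation from the torsion radical to the identity remains a monomorphism after taking directed colimits. Hence both torsion pairs are directed and thus $\Rcal$ is directed.

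For the backward direction, assume $\Xcal$ and $\Ycal$ are Grothendieck and $\Rcal$ is directed. First I would note that, because $\Acal$ is by assumption the heart of a smashing t-structure in a triangulated category that admits (set-indexed) coproducts, it follows from \cite[Proposition 3.2]{PaSa} (in fact, from \cite[Proposition 3.3]{PaSa}, since the t-structure is smashing) that $\Acal$ is AB3 (indeed AB4). With this in hand, Theorem \ref{AB5}(1) applies: $\Xcal$ and $\Ycal$ are AB5 (they are Grothendieck) and $\Rcal$ is directed, so $\Acal$ is AB5. Finally, since $\Acal$ is AB5 and, still by hypothesis, is the heart of a smashing t-structure in a well generated triangulated category, Proposition \ref{generator heart} (the implication $(3)\Rightarrow(1)$) yields that $\Acal$ is Grothendieck.

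There is really no hard step in this plan: each ingredient is already established earlier in the paper. The only thing to watch for is that the hypotheses of the corollary package together exactly what is needed, namely that the middle triangulated category $\Dcal$ is well generated, that its t-structure is smashing (so that the heart is AB3/AB4 and Proposition \ref{generator heart} applies), and that $\Rcal$ itself is a directed recollement (so that Theorem \ref{AB5} upgrades AB3 to AB5 for $\Acal$). With those verified, the two directions assemble cleanly from the cited results.
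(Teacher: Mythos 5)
Your proposal is correct and follows essentially the same route as the paper: the backward direction uses \cite[Proposition 3.3]{PaSa} to get AB4 for the heart, Theorem \ref{AB5} to upgrade to AB5, and Proposition \ref{generator heart} to produce a generator, while the forward direction (which the paper leaves implicit) follows from Lemma \ref{AllsGrothendieck} and the observation that every torsion pair in an AB5 category is directed. No gaps.
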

\begin{proof}
We only need to prove that if $\Xcal$ and $\Ycal$ are Grothendieck and $\Rcal$ is directed then $\Acal$ is Grothendick. Since $\Acal$ is a heart in a triangulated category with coproducts, it follows from \cite{PaSa} that $\Acal$ is AB3 (even AB4 since the t-structure is smashing) and we may, therefore, use Theorem \ref{AB5} to conclude that $\Acal$ is AB5. Now, since $\Acal$ is the heart of a smashing t-structure in a well generated triangulated category, Proposition \ref{generator heart} shows that $\Acal$ is Grothendieck.
\end{proof}

We now combine the above Corollary with Propositions \ref{rec hearts suf} and \ref{rec hearts suf2}.

\begin{corollary}\label{rec hearts cor1}
Let $\Rcal$ be a recollement of abelian categories as in (\ref{rec}). Assume that 
\begin{itemize}
\item $\Acal$ has an injective cogenerator;
\item $D(\Xcal)$ and $D(\Acal)$ exist and admit products and coproducts;
\item the derived functor $j^*:D(\Acal)\longrightarrow D(\Xcal)$ commutes with products;
\item $D(\Acal)$ is well generated and left-complete.
\end{itemize}
Then $\Acal$ is Grothendieck if and only if $\Xcal$ and $\Ycal$ are Grothendieck and $\Rcal$ is directed.
\end{corollary}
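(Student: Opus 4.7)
The plan is to reduce to Corollary \ref{rec Grothendieck hearts} by showing that, under the given hypotheses, $\Rcal$ sits inside a recollement of hearts glued from a recollement of well-generated triangulated categories, so that the Grothendieck property can be transported. The necessity direction is immediate: Lemma \ref{AllsGrothendieck} yields that $\Xcal$ and $\Ycal$ are Grothendieck whenever $\Acal$ is, and since $\Acal$ is then AB5, every torsion pair in $\Acal$ is $\mathsf{colim}_\Lambda$-exact for every directed $\Lambda$; in particular both torsion pairs associated to $\Rcal$ are directed, so $\Rcal$ itself is directed.

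For the converse, first I would check that the hypotheses of Proposition \ref{rec hearts suf} are all in place. The assumption that $\Xcal$ is Grothendieck is part of the standing hypothesis; $\Acal$ has an injective cogenerator and $D(\Acal)$ exists, is left-complete and admits products and coproducts by assumption; and the derived functor $j^*\colon D(\Acal)\longrightarrow D(\Xcal)$ commutes with products by hypothesis. For coproducts, recall (as used in the proof of Proposition \ref{rec hearts suf}) that the existence of an injective cogenerator in $\Acal$ forces $\Acal$ to be AB4, so coproducts in $D(\Acal)$ are computed termwise; since $j^*\colon \Acal\longrightarrow \Xcal$ is exact and has a right adjoint $j_*$, it preserves coproducts at the abelian level, and hence so does its derived version. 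Proposition \ref{rec hearts suf} then identifies $\Rcal$ as the recollement of hearts of smashing t-structures induced from a recollement of triangulated categories of the shape $D_\Ycal(\Acal)\longrightarrow D(\Acal)\longrightarrow D(\Xcal)$, constructed as in that proof.

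Next I would verify that these three triangulated categories are well-generated. The middle term $D(\Acal)$ is well-generated by hypothesis, and $D(\Xcal)$ is well-generated because $\Xcal$ is Grothendieck (by Neeman's theorem that the derived category of a Grothendieck abelian category is well-generated). The kernel $D_\Ycal(\Acal)$ is a localizing subcategory of $D(\Acal)$ which, as the kernel of $j^*$ in a recollement of triangulated categories, is generated as a localizing subcategory by the set $\{i_*i^*S : S\in\Scal\}$, where $\Scal$ is any set of $\alpha$-small generators of $D(\Acal)$. By Krause's theorem, any localizing subcategory of a well-generated triangulated category generated by a set is itself well-generated. With $\Rcal$ now exhibited as a recollement of hearts of smashing t-structures glued from a recollement of well-generated triangulated categories, and with $\Rcal$ directed by assumption, Corollary \ref{rec Grothendieck hearts} concludes that $\Acal$ is Grothendieck.

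The main potential obstacle is the verification that the kernel $D_\Ycal(\Acal)$ is well-generated; the other steps reduce to straightforward applications of Proposition \ref{rec hearts suf}, Corollary \ref{rec Grothendieck hearts} and standard facts about derived categories of Grothendieck abelian categories.
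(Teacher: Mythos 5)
Your proof is correct and follows essentially the same route as the paper, which likewise deduces the result by combining Proposition \ref{rec hearts suf} with Corollary \ref{rec Grothendieck hearts}, using the injective cogenerator to get AB4 and hence commutation of $j^*$ with coproducts. The only difference is that you additionally verify well-generatedness of the outer triangulated terms (correctly, via Neeman's theorem on localizing subcategories generated by a set), a point the paper leaves implicit since its proof of Corollary \ref{rec Grothendieck hearts} only ever uses well-generatedness of the middle category.
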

\begin{proof}
This result follows as a combination of Proposition \ref{rec hearts suf} and Corollary \ref{rec Grothendieck hearts}. Note that since $\Acal$ is assumed to have an injective cogenerator, $\Acal$ is AB4 (see, for example, \cite[Corollary 4.12]{PV2}) and therefore the derived functor of $j^*$ commutes automatically with coproducts.
\end{proof}

\begin{corollary}\label{rec hearts cor2}
Let $\Rcal$ be a recollement of abelian categories as in (\ref{rec}). Assume that 
\begin{itemize}
\item the derived categories $D(\Ycal)$ and $D(\Acal)$ exist and admit products and coproducts;
\item the derived functor $i_*:D(\Ycal)\longrightarrow D(\Acal)$ commutes with products;
\item the functor $i_*:\Ycal\longrightarrow \Acal$ induces isomorphisms $\mathsf{Ext}^n_\Ycal(M,N)\cong\mathsf{Ext}^n_\Acal(i_*M,i_*N)$ for all $M$ and $N$ in $\Ycal$ and for all $n\geq 0$.
\item $D(\Acal)$ and $D(\Ycal)$ are left complete, and $D(\Acal)$ is well generated.
\end{itemize}
Then $\Acal$ is Grothendieck if and only if $\Xcal$ and $\Ycal$ are Grothendieck and $\Rcal$ is directed.
\end{corollary}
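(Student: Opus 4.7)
The proof combines Proposition \ref{rec hearts suf2} with Corollary \ref{rec Grothendieck hearts}, exactly in the spirit of the proof of Corollary \ref{rec hearts cor1}. The forward direction is immediate: if $\Acal$ is Grothendieck then $\Xcal$ and $\Ycal$ are Grothendieck by Lemma \ref{AllsGrothendieck}, and since $\Acal$ is AB5 each of the two torsion pairs in the TTF triple associated with $\Rcal$ is directed, so $\Rcal$ itself is directed.

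For the converse, assume $\Xcal$ and $\Ycal$ are Grothendieck and $\Rcal$ is directed. The plan is to first invoke Proposition \ref{rec hearts suf2} to realize $\Rcal$ as a recollement of hearts. All its hypotheses are available: $\Ycal$ is Grothendieck with $D(\Ycal)$ left-complete by assumption; $D(\Xcal)$ exists and admits products and coproducts because $\Xcal$ is Grothendieck; $D(\Acal)$ exists, admits products and coproducts and is left-complete by the standing hypotheses of the corollary; the $\mathsf{Ext}$-isomorphism condition and commutation of derived $i_*$ with products are assumed. For the remaining condition, that derived $i_*$ commutes with coproducts, note that $i_*\colon\Ycal\to\Acal$ is exact (as the middle functor of the recollement it is both a left and a right adjoint), so its derived version acts termwise on complexes and inherits from the abelian-level $i_*$ the property of preserving coproducts, itself a consequence of being left adjoint to $i^!$.

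With Proposition \ref{rec hearts suf2} in hand, $\Rcal$ arises from a recollement of triangulated categories involving $D(\Ycal)$, $D(\Acal)$ and $D(\Xcal)$ via glued standard t-structures. To apply Corollary \ref{rec Grothendieck hearts} I must upgrade these t-structures to smashing ones: by \cite[Proposition 3.2]{PaSa} the heart $\Acal$ is AB3, and Theorem \ref{AB5} together with the AB5 property of $\Xcal$ and $\Ycal$ and the directedness of $\Rcal$ then yields that $\Acal$ is AB5, hence AB4; the final clause of Proposition \ref{rec hearts suf2} then promotes the t-structures to smashing ones. Finally, $D(\Ycal)$ and $D(\Xcal)$ are well generated (being derived categories of Grothendieck categories, by Neeman's theorem) and $D(\Acal)$ is well generated by hypothesis. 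Corollary \ref{rec Grothendieck hearts} therefore concludes that $\Acal$ is Grothendieck.

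The main obstacle I anticipate is carefully justifying that derived $i_*$ commutes with coproducts before knowing $\Acal$ is AB4: although the exactness and coproduct-preservation of the abelian $i_*$ make this morally clear, making it rigorous may require a termwise computation of coproducts in $D(\Acal)$ via suitable resolutions, or alternatively appealing to an adjunction at the derived level between $i_*$ and $\mathbb{R}i^!$. Once this hurdle is cleared, the remainder is a clean assembly of the pieces quoted above.
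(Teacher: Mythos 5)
Your proposal is correct and follows essentially the same route as the paper, which likewise deduces the corollary by combining Proposition \ref{rec hearts suf2} with Corollary \ref{rec Grothendieck hearts} and disposes of the only delicate point — that the derived functor $i_*$ commutes with coproducts — by observing that this holds under either side of the stated equivalence (in the converse direction, via $\Acal$ being AB4 once Theorem \ref{AB5} applies, so that coproducts in $D(\Acal)$ are computed degreewise). The obstacle you flag at the end is exactly the point the paper's one-line proof glosses over, and your proposed resolution matches the intended one.
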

\begin{proof}
This result follows as a combination of Proposition \ref{rec hearts suf2} and Corollary \ref{rec Grothendieck hearts}. Indeed, note that $i_*$ will commute with coproducts given either of the assumptions in the equivalence stated above.
\end{proof}

\subsection{Hearts in compactly generated triangulated categories}
Recall that a triangulated category $\Dcal$ is said to be \textbf{compactly generated} if it is $\aleph_0$-well generated, and we denote its $\aleph_0$-compact objects (which are precisely those objects $X$ such that $\Hom_\Dcal(X,-)$ commutes with arbitrary coproducts i.e., the compact objects of $\Dcal$) by $\Dcal^c$. Examples of such categories are the derived categories of small differential graded categories (in fact, up to equivalence, these are known to be precisely the algebraic compactly generated triangulated categories, see \cite{Keller} for details).
We will use the following result.

\begin{theorem}\label{Gao-Ps}\cite[Proposition 3.4]{GPs}
Let $\Xcal$, $\Ycal$ and $\Dcal$ be compactly generated triangulated categories and suppose that there is a recollement
\begin{equation}\nonumber\Rcal\colon\ \ \ \ \   \xymatrix@C=0.5cm{
\Ycal \ar[rrr]^{i_*} &&& \Dcal \ar[rrr]^{j^*}  \ar @/_1.5pc/[lll]_{i^*}  \ar
 @/^1.5pc/[lll]_{i^!} &&& \Xcal
\ar @/_1.5pc/[lll]_{j_!} \ar
 @/^1.5pc/[lll]_{j_*}
 } 
 \end{equation}
 The following are equivalent.
 \begin{enumerate}
 \item The functor $i_*$ preserves compact objects;
 \item The functor $j^*$ preserves compact objects;
 \item The recollement $\Rcal$ restricts to a recollement between the full subcategories of compact objects;
 \item $i^!$ and $j_*$ admit right adjoints, yielding a reflected recollement.
 \end{enumerate}
 
\end{theorem}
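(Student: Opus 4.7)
The plan is to prove the four equivalences using two standard tools for compactly generated triangulated categories: Brown representability (a triangulated functor has a right adjoint if and only if it preserves coproducts), and the companion fact that a left adjoint preserves compact objects if and only if its right adjoint preserves coproducts. Applied to the adjunctions $(i_*, i^!)$ and $(j^*, j_*)$ of the recollement, these two tools immediately give the equivalences of $(1)$ and $(2)$ with the two clauses of $(4)$: condition $(1)$ is equivalent to ``$i^!$ admits a right adjoint'', and condition $(2)$ is equivalent to ``$j_*$ admits a right adjoint''.

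Next I would prove $(1) \Leftrightarrow (2)$ using the canonical triangle $j_!j^*M \to M \to i_*i^*M \to j_!j^*M[1]$. For $(1) \Rightarrow (2)$: given $M \in \Dcal^c$, the object $i^*M$ is compact (since $i^*$ always preserves compact objects), so $(1)$ makes $i_*i^*M$ compact; the triangle then forces $j_!j^*M$ to be compact, and since $j_!$ is fully faithful and preserves coproducts, $j^*M \cong j^*j_!j^*M$ is compact in $\Xcal$. For $(2) \Rightarrow (1)$: the set $\{i^*G : G \in \Dcal^c\}$ is a set of compact generators of $\Ycal$ (compactness because $i^*$ preserves compacts; generation because $i_*$ is conservative and $\Dcal$ is compactly generated), whence $\Ycal^c$ is their thick closure. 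The class $\{N \in \Ycal : i_*N \in \Dcal^c\}$ is a thick subcategory of $\Ycal$, and the same triangle applied to a compact $G$ with $(2)$ in force shows that each $i^*G$ lies in this class, so it contains all of $\Ycal^c$.

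The implication $(3) \Rightarrow (1)$ is clear. Conversely, to obtain $(3)$ from $(1)$ and $(2)$, one must verify that the six recollement functors all preserve compact objects. This is automatic for $i^*$ and $j_!$, and is $(1)$ and $(2)$ for $i_*$ and $j^*$; the remaining cases are $i^!$ and $j_*$. Here I would use $(4)$: the right adjoints $i^?$ of $i^!$ and $j^?$ of $j_*$ now exist, and by the same left-adjoint-preserves-compacts equivalence applied one level up, $i^!$ (respectively $j_*$) preserves compacts if and only if $i^?$ (respectively $j^?$) preserves coproducts. A version of the triangle-and-thick-subcategory argument of the previous paragraph, now rerun in the extended adjoint data furnished by $(4)$, delivers the required preservation for $i^?$ and $j^?$.

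The main obstacle is this last step: the two conditions ``$i^!$ preserves compacts'' and ``$j_*$ preserves compacts'' become mutually coupled when one tries to use the canonical triangles head-on, because each such attempt naturally invokes the other. Decoupling them requires promoting the situation to the extended adjoint data provided by $(4)$, where the symmetric position of $i^!$ and $j_*$ relative to the new right adjoints $i^?$ and $j^?$ mirrors that of $i_*$ and $j^*$ relative to $i^!$ and $j_*$ in the original recollement; this symmetry is what allows the earlier argument to be applied one level up, closing the circle of equivalences.
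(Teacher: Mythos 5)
The paper offers no proof of this statement; it is imported verbatim from \cite[Proposition 3.4]{GPs}, so your argument can only be checked against the statement itself. Your treatment of $(1)\Leftrightarrow(2)\Leftrightarrow(4)$ is correct and standard: Brown representability, together with the fact that a left adjoint defined on a compactly generated category preserves compacts if and only if its right adjoint preserves coproducts, identifies $(1)$ and $(2)$ with the two halves of $(4)$; and your two-out-of-three arguments in the triangle $j_!j^*M\to M\to i_*i^*M\to j_!j^*M[1]$, combined with the observations that the $i^*G$ form a set of compact generators of $\Ycal$ and that $\{N\in\Ycal\colon i_*N\in\Dcal^c\}$ is thick, correctly give $(1)\Leftrightarrow(2)$.

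The gap is exactly where you place the difficulty, in $(1)+(2)\Rightarrow(3)$, and your proposed fix does not close it. Rerunning the $(1)\Leftrightarrow(2)$ argument ``one level up'' in the reflected recollement furnished by $(4)$ proves only that ``$i^!$ preserves compacts'' and ``$j_*$ preserves compacts'' are equivalent \emph{to each other} (they are conditions $(1)$ and $(2)$ \emph{of the reflected recollement}); it gives no way to deduce either of them from the original $(1)$ and $(2)$, and in fact they can fail. Take $A=\left(\begin{smallmatrix} k & 0\\ k & k[x]/(x^2)\end{smallmatrix}\right)$, where the corner $k$ carries the left $k[x]/(x^2)$-action through $k[x]/(x)$, and let $e=e_{11}$; this yields a recollement of $D(A)$ by $\Ker((-)e)\simeq D(k[x]/(x^2))$ and $D(eAe)=D(k)$. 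Here $Ae$ is two-dimensional over $eAe=k$, so $(2)$ holds, and $AeA\cong (e_1A)^2$ is projective as a right $A$-module, so $i_*i^*A\cong A/AeA$ is perfect and $(1)$ holds; but $j_*(k)=\Hom_k(Ae,k)$ is a two-dimensional right $A$-module whose syzygy is the simple at the second vertex, of infinite projective dimension, so $j_*$ does not preserve compact objects. Since (as your own uniqueness-of-adjoints reasoning shows) an honest recollement of $\Ycal^c$, $\Dcal^c$, $\Xcal^c$ by the six restricted functors would force $j_*$ and $i^!$ to preserve compacts, condition $(3)$ cannot be read that strictly: it can only assert that the four functors $i^*,i_*,j_!,j^*$ restrict, giving a localization sequence $\Ycal^c\to\Dcal^c\to\Xcal^c$ with $\Xcal^c$ the idempotent completion of the image of $j^*$ --- and under that reading $(3)$ follows from $(1)$ and $(2)$ with none of the extra work of your last paragraph. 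As written, your final step claims something that is false in general, so it must be either deleted (weak reading of $(3)$) or abandoned (strict reading, under which the asserted equivalence itself fails). Note that the present paper only ever invokes conditions $(1)$, $(2)$ and $(4)$ of this theorem.
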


Taking a different approach from the previous sections, we will use some techniques from \cite{AMV} to produce recollements of hearts which are recollements of Grothendieck categories. As before, we consider the category $\mathsf{Mod}\text{-}\Dcal^c$ of additive (contravariant) functors $(\Dcal^c)^{op}\longrightarrow \mathsf{Mod}\text{-}\mathbb{Z}$. 
It is well-known that $\mathsf{Mod}\text{-}\Dcal^c$ is a Grothendieck category and that $\mathbf{y}_{\aleph_0}:\Dcal\longrightarrow \mathsf{Mod}\text{-}\Dcal^c$ (in this subsection denoted simply by $\mathbf{y}$) is not, in general, fully faithful. Moreover, it is easy to see that $\mathbf{y}$ sends triangles to long exact sequences.

\begin{definition}
Let $\Dcal$ be a compactly generated triangulated category. 
\begin{enumerate}
\item A triangle in $\Dcal$ is said to be \textbf{pure} if its image under $\mathbf{y}$ is a short exact sequence. 
\item A covariant additive functor $F:\Dcal\longrightarrow \mathsf{Mod}\text{-}\mathbb{Z}$ is \textbf{coherent} if there are compact objects $K$ and $L$ and an exact sequence 
$$Hom_\Dcal(K,-)\longrightarrow Hom_\Dcal(L,-)\longrightarrow F\longrightarrow 0.$$
The category of coherent functors is denoted by $\mathsf{Coh}$-$\Dcal$. 
\item A subcategory $\Vcal$ of $\Dcal$ is said to be \textbf{definable} if there is a set of functors $(F_i)_{i\in I}$ in $\mathsf{Coh}$-$\Dcal$ such that $X$ lies in $\Vcal$ if and only if $F_i(X)=0$ for all $i$ in $I$.
\end{enumerate}
\end{definition}

We use the following criterion to determine whether certain hearts are Grothendieck categories.

\begin{theorem}\cite[Theorems 3.6 and 4.9]{AMV}\label{Grothendieck definable}
Let $\Dcal$ be a compactly generated triangulated category and $(\Ucal,\Vcal,\Wcal)$ a cosuspended TTF triple in $\Tcal$ such that $(\Ucal,\Vcal)$ is nondegenerate. The following are equivalent.
\begin{enumerate}
\item The heart $\Hcal_\mathbb{T}$ is a Grothendieck abelian category.
\item $\Vcal$ is definable.
\end{enumerate}
\end{theorem}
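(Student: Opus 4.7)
The plan is to use the standard characterization of definable subcategories of a compactly generated triangulated category $\Dcal$ as those closed under products, coproducts, and pure subobjects. The first observation is that in our cosuspended TTF triple, $\Vcal$ is already closed under products (as the right aisle of the t-structure $(\Ucal,\Vcal)$) and under coproducts (as the left aisle of the co-t-structure $(\Vcal,\Wcal)$); the whole equivalence therefore reduces to showing that $\Hcal_\mathbb{T}$ is Grothendieck if and only if $\Vcal$ is closed under pure subobjects.

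For the direction $(2)\Rightarrow(1)$, closure of $\Vcal$ under coproducts makes the t-structure $(\Ucal,\Vcal)$ smashing, so by \cite[Proposition 3.3]{PaSa} the heart $\Hcal_\mathbb{T}$ is AB4 and $H^0$ commutes with coproducts. To obtain AB5, I would exploit the fact that definable classes are closed under directed colimits computed through the restricted Yoneda functor $\mathbf{y}\colon \Dcal \longrightarrow \mathsf{Mod}\text{-}\Dcal^c$; combined with closure under pure subobjects and the description of the heart as $\Ucal\cap\Vcal[1]$, this should yield that the torsion radical on $\Hcal_\mathbb{T}$ associated to the truncation of $(\Ucal,\Vcal)$ commutes with direct limits, so that Theorem \ref{AB5} promotes AB4 to AB5. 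Finally, Proposition \ref{generator heart} supplies a generator for free, since $\Hcal_\mathbb{T}$ is then the heart of a smashing t-structure in a compactly (hence well) generated triangulated category.

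For the direction $(1)\Rightarrow(2)$, assume $\Hcal_\mathbb{T}$ is Grothendieck. Given a pure triangle $X\longrightarrow Y\longrightarrow Z$ with $Y\in\Vcal$, purity is captured by $\mathbf{y}$, so $X$ can be represented as a directed colimit in $\Hcal_\mathbb{T}$ of pieces controlled by compact objects of $\Dcal$. Nondegeneracy of $(\Ucal,\Vcal)$ ensures that membership in $\Vcal$ can be detected by the vanishing of $H^n$ for $n\geq 1$ in the Grothendieck heart; since $H^0$ commutes with coproducts and AB5 holds in $\Hcal_\mathbb{T}$, each such vanishing condition should be expressible through a coherent functor on $\Dcal$, yielding a defining set of coherent functors for $\Vcal$.

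The main obstacle is the direction $(1)\Rightarrow(2)$. Extracting from the abstract existence of a Grothendieck generator in $\Hcal_\mathbb{T}$ a concrete set of coherent functors on $\Dcal$ that cut out $\Vcal$ requires carefully approximating objects of $\Hcal_\mathbb{T}$ by images of compact objects under $H^0$, and compatibly lifting pure-exact sequences in $\mathsf{Mod}\text{-}\Dcal^c$ to directed-colimit presentations in $\Hcal_\mathbb{T}$. This is precisely where the compact-generation hypothesis (rather than mere well-generation) and the cosuspended structure on the TTF triple become jointly indispensable: the former guarantees that purity is detected by honestly compact objects (so the defining functors are genuinely coherent), and the latter ensures that the negative-cohomology obstructions are automatically controlled by $(\Vcal,\Wcal)$.
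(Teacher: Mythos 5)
This statement is not proved in the paper at all: it is quoted verbatim from \cite[Theorems 3.6 and 4.9]{AMV}, so there is no internal proof to compare against. Judged on its own terms, your sketch starts from the right reduction (a definable subcategory of a compactly generated triangulated category is one closed under products, coproducts and pure subobjects, and $\Vcal$ already has the first two closure properties as the coaisle of the t-structure and the aisle of the co-t-structure), but both directions contain genuine gaps. In $(2)\Rightarrow(1)$ you invoke Theorem \ref{AB5} to promote AB4 to AB5; that theorem is a statement about recollements of abelian categories and there is no recollement in sight here --- the heart of a single t-structure does not come equipped with one. The actual mechanism in \cite{AMV} for obtaining AB5 is different: definability of $\Vcal$ forces the t-structure to be a cosilting t-structure associated with a \emph{pure-injective} cosilting object, and pure-injectivity is what makes direct limits in the heart exact (the generator then indeed comes from well-generation, in the spirit of Proposition \ref{generator heart}).

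The more serious gap is in $(1)\Rightarrow(2)$, which you acknowledge but do not bridge. Your plan --- to express the conditions $H^n(X)=0$ for $n\geq 1$ by coherent functors --- cannot work as stated: the cohomological functors $H^n$ of a t-structure are not coherent functors on $\Dcal$ (they do not factor through finitely presented data over $\Dcal^c$ in general), so nondegeneracy plus AB5 does not by itself produce a defining family. The argument of \cite{AMV} instead extracts from the Grothendieck heart an injective cogenerator, lifts it to a pure-injective object $C$ of $\Dcal$ that cogenerates the t-structure (a cosilting object), and then uses the standard fact that the class $\{X: \Hom_\Dcal(X,C[n])=0,\ n>0\}$ cut out by a pure-injective object is definable. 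This detour through cosilting objects and pure-injectivity is the essential idea missing from your proposal; without it neither the exactness of direct limits in $(2)\Rightarrow(1)$ nor the definability in $(1)\Rightarrow(2)$ is actually established.
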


Given a recollement of hearts induced from glueing (a special kind of) t-structures in a recollement of compactly generated triangulated categories, we are then able to use the ideas introduced above to provide a criterion for when the middle term is a Grothendieck abelian category.

\begin{theorem}\label{thm comp gen}
Let $\Dcal, \Xcal$, and $\Ycal$ be compactly generated triangulated categories. Suppose that $\Rcal$ is a recollement of triangulated categories as in (\ref{rec}) satisfying the equivalent conditions of Theorem \ref{Gao-Ps}. Let $(\Ucal_\Xcal,\Vcal_\Xcal,\Wcal_\Xcal)$ and $(\Ucal_\Ycal,\Vcal_\Ycal,\Wcal_\Ycal)$ be cosuspended TTF triples in $\Xcal$ and $\Ycal$ respectively, such that $(\Ucal_\Xcal,\Vcal_\Xcal)$ and $(\Ucal_\Ycal,\Vcal_\Ycal)$ are nondegenerate t-structures. The following statements hold for the glued t-structure $(\Ucal_\Dcal, \Vcal_\Dcal)$ in $\Dcal$.
\begin{enumerate}
\item The t-structure $(\Ucal_\Dcal, \Vcal_\Dcal)$ is nondegenerate and there is a cosuspended TTF triple $(\Ucal_\Dcal, \Vcal_\Dcal,\Wcal_\Dcal)$
\item The hearts $\Hcal_\Xcal:=\Ucal_\Xcal[-1]\cap\Vcal_\Xcal$ and $\Hcal_\Ycal:=\Ucal_\Ycal[-1]\cap \Vcal_\Ycal$ are Grothendieck if and only if the heart $\Hcal_\Dcal:=\Ucal_\Dcal[-1]\cap\Vcal_\Dcal$ is Grothendieck.
\end{enumerate}
\end{theorem}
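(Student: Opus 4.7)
The plan is to establish (1) by glueing the TTF structure from the two sides of the recollement, and then to use this, together with the definability criterion (Theorem \ref{Grothendieck definable}), to prove (2). For (1), I would first verify nondegeneracy of $(\Ucal_\Dcal, \Vcal_\Dcal)$ directly from nondegeneracy in $\Xcal$ and $\Ycal$: if $Z \in \bigcap_{n\in\mathbb{Z}} \Ucal_\Dcal[n]$, then $j^*Z \in \bigcap_n \Ucal_\Xcal[n] = 0$, so from the canonical triangle $j_!j^*Z \to Z \to i_*i^*Z$ we get $Z \cong i_*i^*Z$; but $i^*Z \in \bigcap_n \Ucal_\Ycal[n] = 0$, forcing $Z = 0$. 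The vanishing $\bigcap_n \Vcal_\Dcal[n] = 0$ is dual, using $i^!$ and the canonical triangle $i_*i^!Z \to Z \to j_*j^*Z$. For the TTF part, I would define $\Wcal_\Dcal := \{Z \in \Dcal : j^*Z \in \Wcal_\Xcal,\ i^!Z \in \Wcal_\Ycal\}$, which is the right component of the co-t-structure in $\Dcal$ obtained by BBD glueing of $(\Vcal_\Xcal, \Wcal_\Xcal)$ and $(\Vcal_\Ycal, \Wcal_\Ycal)$. The key technical step is to show that the left component of this glued co-t-structure coincides with $\Vcal_\Dcal$: applying $i^*$ to the triangle $i_*i^!Z \to Z \to j_*j^*Z \to i_*i^!Z[1]$ yields a triangle $i^!Z \to i^*Z \to i^*j_*j^*Z \to i^!Z[1]$, and the identification then reduces to controlling $i^*j_*(\Vcal_\Xcal)$ inside $\Vcal_\Ycal$, which I would handle using the cosuspended TTF structure of the source categories and closure of $\Vcal_\Ycal$ under the shift $[-1]$.

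For (2), the forward direction is Lemma \ref{AllsGrothendieck}. For the converse, by Theorem \ref{Grothendieck definable} applied in $\Xcal$ and $\Ycal$, the Grothendieck property of $\Hcal_\Xcal$ and $\Hcal_\Ycal$ is equivalent to $\Vcal_\Xcal$ being definable in $\Xcal$ and $\Vcal_\Ycal$ being definable in $\Ycal$. My plan is to combine these two definability conditions into definability of $\Vcal_\Dcal$ in $\Dcal$, so that a final application of Theorem \ref{Grothendieck definable}, now to the cosuspended TTF triple constructed in part (1), concludes that $\Hcal_\Dcal$ is Grothendieck. Concretely, if $\Vcal_\Xcal$ is cut out by the vanishing of a family $(F_i)_{i \in I}$ in $\mathsf{Coh}$-$\Xcal$ and $\Vcal_\Ycal$ by $(G_k)_{k \in K}$ in $\mathsf{Coh}$-$\Ycal$, then $\Vcal_\Dcal$ is cut out by $\{F_i \circ j^* : i \in I\} \cup \{G_k \circ i^! : k \in K\}$ on $\Dcal$. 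To see that these composites are coherent, I use the adjunctions: a presentation $\Hom_\Xcal(K,-) \to \Hom_\Xcal(L,-) \to F \to 0$ with $K, L$ compact in $\Xcal$ gives, via $(j_!, j^*)$, a presentation $\Hom_\Dcal(j_!K,-) \to \Hom_\Dcal(j_!L,-) \to F \circ j^* \to 0$, and $j_!K$, $j_!L$ are compact in $\Dcal$ because the right adjoint $j^*$ of $j_!$ preserves coproducts. Dually, via $(i_*, i^!)$, a presentation of $G$ over compacts in $\Ycal$ gives a presentation of $G \circ i^!$ over $i_*$ of those compacts, which are compact in $\Dcal$ by the restriction-to-compacts hypothesis (Theorem \ref{Gao-Ps}).

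The main obstacle is the identification of the common middle class in part (1): one must show that the two BBD glueing constructions, applied to $(\Ucal_\Xcal, \Vcal_\Xcal)$ as a t-structure and to $(\Vcal_\Xcal, \Wcal_\Xcal)$ as a co-t-structure, agree on the resulting $\Vcal_\Dcal$. This rests on controlling the connecting composite $i^*j_*$ on objects of $\Vcal_\Xcal$, and uses both the cosuspended structure of the TTF triples and the compactness hypotheses in an essential way. Once part (1) is secured, the definability argument for (2) flows cleanly.
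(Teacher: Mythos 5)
Your nondegeneracy check and your part (2) are sound. In fact, your coherence argument for (2) — transporting a presentation $\Hom_\Xcal(K,-)\to\Hom_\Xcal(L,-)\to F\to 0$ along the adjunctions $(j_!,j^*)$ and $(i_*,i^!)$, using that $j_!$ always preserves compacts and that $i_*$ does so by hypothesis — is more direct than the paper's, which instead invokes Krause's characterisation of coherent functors as those preserving products, coproducts and pure triangles and then checks that $i^!$ and $j^*$ preserve pure triangles. The genuine gap is in the TTF part of (1). Glueing the co-t-structures along the \emph{original} recollement produces a torsion pair whose left class is $\Vcal'_\Dcal=\{Z:\ j^*Z\in\Vcal_\Xcal,\ i^*Z\in\Vcal_\Ycal\}$ (left classes glue via the left adjoints $j^*,i^*$; right classes via $j^*,i^!$), and you must then show $\Vcal'_\Dcal=\Vcal_\Dcal$. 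As you note, this reduces to the inclusion $i^*j_*(\Vcal_\Xcal)\subseteq\Vcal_\Ycal$, but you do not prove it, and the obvious attempts are circular: writing $\Vcal_\Ycal$ as the class of objects $V'$ with $\Hom_\Ycal(V',\Wcal_\Ycal)=0$ and using the adjunction $(i^*,i_*)$, the inclusion amounts to $\Hom_\Dcal(j_*V,i_*W)=0$ for $V\in\Vcal_\Xcal$ and $W\in\Wcal_\Ycal$, and applying $\Hom_\Dcal(-,i_*W)$ to the canonical triangle $j_!V\to j_*V\to i_*i^*j_*V$ merely returns $\Hom_\Dcal(j_*V,i_*W)\cong\Hom_\Ycal(i^*j_*V,W)$. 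This vanishing does hold, but the only proof I see is \emph{a posteriori}, once the co-t-structure $(\Vcal_\Dcal,\Wcal_\Dcal)$ has already been constructed by other means.

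The paper sidesteps the identification entirely by using condition (4) of Theorem \ref{Gao-Ps}: since $i^!$ and $j_*$ admit right adjoints, there is a \emph{reflected} recollement in which $j_*$ is the embedding of the ``closed'' part and $i^!$ is the quotient functor. Glueing the co-t-structures $(\Vcal_\Xcal,\Wcal_\Xcal)$ and $(\Vcal_\Ycal,\Wcal_\Ycal)$ along \emph{that} recollement yields, by the standard formula, a torsion pair whose left class is literally $\{Z:\ j^*Z\in\Vcal_\Xcal,\ i^!Z\in\Vcal_\Ycal\}=\Vcal_\Dcal$, so no comparison of $i^*Z$ with $i^!Z$ is ever needed and the cosuspended TTF triple $(\Ucal_\Dcal,\Vcal_\Dcal,\Wcal_\Dcal)$ falls out at once (cf. \cite[Remark 2.6]{LVY}). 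Replace your glueing-along-the-original-recollement step by this one; everything downstream in your argument, including all of part (2), then goes through.
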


\begin{proof}
(1) Since $\Rcal$ admits a lower reflected recollement, i.e. the right adjoints of $i^!$ and $j_*$ give rise to a new recollement, it can be easily checked that glueing the co-t-structures $(\Vcal_\Xcal,\Wcal_\Xcal)$ and $(\Vcal_\Ycal,\Wcal_\Ycal)$ along the lower recollement yield a co-t-structure of the form $(\Vcal_\Dcal,\Wcal_\Dcal)$ (see also \cite[Remark 2.6]{LVY}).

(2) Since the t-structure $(\Ucal_\Dcal,\Vcal_\Dcal)$ is built by glueing $(\Ucal_\Xcal,\Vcal_\Xcal)$ and $(\Ucal_\Ycal,\Vcal_\Ycal)$, we know that there is a recollement of $\Hcal_\Dcal$ by $\Hcal_\Xcal$ and $\Hcal_\Ycal$, see Subsection \ref{glue}. If $\Hcal_\Dcal$ is Grothendieck, then, from Lemma \ref{AllsGrothendieck}, so are $\Hcal_\Xcal$ and $\Hcal_\Ycal$. Let us now prove the reverse direction. Using Theorem \ref{Grothendieck definable} we know that $\Vcal_\Xcal$ and $\Vcal_\Ycal$ are definable and we only need to prove that $\Vcal_\Dcal$ is definable as well. Let $(F_j)_{j\in J}$ and $(G_k)_{k\in K}$ be two families of functors in $\mathsf{Coh}$-$\Ycal$ and $\mathsf{Coh}$-$\Xcal$ respectively, such that an object $Y\in\Ycal$ lies in $\Vcal_\Ycal$ if and only if $F_j(Y)=0$ for all $j\in J$ and such that an object $X\in\Xcal$ lies in $\Vcal_\Xcal$ if and only if $G_k(X)=0$ for all $k\in K$. It is then clear, by the glueing formula for t-structures that 
$$\Vcal_\Dcal=\{Z\in\Dcal: F_ji^!(Z)=0, G_kj^*(Z)=0, \forall j\in J, \forall k\in K\}.$$
Hence, to conclude the proof, it is enough to show that $F_ji^!$ and $G_kj^*$ are coherent functors, thus showing that $\Vcal_\Dcal$ is definable and that, by Theorem \ref{Grothendieck definable}, the heart $\Hcal_\Dcal$ is Grothendieck. By \cite[Proposition 5.1]{Kr2}, a functor is coherent if and only if it preserves coproducts, products and it sends pure triangles to short exact sequences. Since $F_j$ and $G_k$ are coherent functors over $\Ycal$ and $\Xcal$ respectively, and since the functors $j^*$ and $i^!$ have both left and right adjoints (by assumption on $\Rcal$), we easily see that $F_ji^!$ and $G_kj^*$ preserve both products and coproducts. It remains to see that they send pure triangles to short exact sequences. For this, it is enough to see that both $i^!$ and $j^*$ preserve pure triangles and then we use the fact that $F_j$ and $G_k$ are coherent. Let $\Delta$ be a pure triangle in $\Dcal$. This means precisely that $\mathbf{y}\Delta$ is a short exact sequence in $Mod(\Dcal^c)$, i.e. for any compact object $C$ in $\Tcal^c$, the sequence $Hom_\Dcal(C,\Delta)$ is short exact. Now, observe that given an object $C^\prime\in\Xcal^c$, the sequence $Hom_\Xcal(C^\prime,j^*\Delta)\cong Hom_\Dcal(j_!C^\prime,\Delta)$ is short exact since $j_!C^\prime$ is compact (the functor $j_!$ preserves compact objects in any recollement). On the other hand, given $C^{\prime\prime}\in\Ycal^c$, the sequence $Hom_\Ycal(C^{\prime\prime},i_!\Delta)\cong Hom_\Dcal(i_*C^{\prime\prime},\Delta)$ is short exact since $i_*$ preserves compact objects by our assumption on $\Rcal$. This finishes the proof.
\end{proof}

\end{document}